\newenvironment{changemargin}[2]{\begin{list}{}{%
\setlength{\topsep}{0pt}%
\setlength{\leftmargin}{0pt}%
\setlength{\rightmargin}{0pt}%
\setlength{\listparindent}{\parindent}%
\setlength{\itemindent}{\parindent}%
\setlength{\parsep}{0pt plus 1pt}%
\addtolength{\leftmargin}{#1}%
\addtolength{\rightmargin}{#2}%
}\item }{\end{list}}
\newcommand{\R}{\mathbb{R}}
\newcommand{\Z}{\mathbb{Z}}
\newcommand{\bv}{\bm{v}}
\newcommand{\Div}{{\rm{Div}}}
\providecommand{\R}{\mathbb{R}}
\providecommand{\Z}{\mathbb{Z}}
\providecommand{\eps}{\varepsilon}
\renewcommand{\leq}{\leqslant}
\renewcommand{\geq}{\geqslant}
\renewcommand{\div}{\operatorname{div}}
\newcommand{\dist}{\operatorname{dist}}
\newcommand{\Id}{\operatorname{Id}}
\renewcommand{\S}{\mathcal{S}}
\newcommand{\dd}{\, d}
\DeclareMathOperator{\Bog}{Bog}
\DeclareMathOperator{\Y}{Y}
\DeclareMathOperator{\diam}{diam}
\DeclareMathOperator{\supp}{supp}
\newcommand{\dmin}{d_{\min}}
\DeclareMathOperator{\dv}{\operatorname{div}}
\newcommand{\loc}{\mathrm{loc}}
\definecolor{darkgreen}{rgb}{0,0.5,0}
\definecolor{lightgreen}{rgb}{0,0.8,0}
\definecolor{darkblue}{rgb}{0,0,0.7}
\definecolor{darkred}{rgb}{0.9,0.1,0.1}
\definecolor{lightblue}{rgb}{0,0.51,1}
  \newtheorem{thm}{Theorem}[section]
  \newtheorem{cor}[thm]{Corollary}
  \newtheorem{Lemma}[thm]{Lemma}
  \newtheorem{Proposition}[thm]{Proposition}
  \theoremstyle{definition}
    \newtheorem{Definition}[thm]{Definition}
  \theoremstyle{remark}
  \newtheorem{remark}[thm]{Remark}
\newtheorem{op}[thm]{Open Problem}
\begin{document}

\pagenumbering{roman}

\newcommand{\Addresses}{{% additional braces for segregating \footnotesize
  \bigskip
  \footnotesize

\noindent  R.~H\"ofer, \textsc{Institut de Mathématiques de Jussieu-Paris Rive Gauche, Université de Paris,
    8~Place Aurélie Nemours,
F75205 Paris Cedex~13, France}\par\nopagebreak
\noindent  \textit{E-mail address:}\texttt{hoefer@imj-prg.fr}

  \medskip

\noindent  C.~Prange, \textsc{Laboratoire de Math\'ematiques AGM, UMR CNRS 8088, Cergy Paris Universit\'e, 2~avenue Adolphe Chauvin, F95302 Cergy-Pontoise Cedex, France}\par\nopagebreak
\noindent  \textit{E-mail address:} \texttt{christophe.prange@cyu.fr}

  \medskip

\noindent  F.~Sueur, \textsc{  Institut de Math\'ematiques de Bordeaux, UMR CNRS 5251,Universit\'e de Bordeaux, 351~cours
de la Lib\'eration, F33405 Talence Cedex, France   $\&$ Institut  Universitaire de France}\par\nopagebreak 
\noindent  \textit{E-mail address:} \texttt{Franck.Sueur@math.u-bordeaux.fr}

}}

\title{Motion of several slender rigid filaments in a Stokes flow}

\author{Richard M. H\"ofer, Christophe Prange, Franck Sueur}

\date{\today}

\maketitle
%%% ----------------------------------------------------------------------
\setcounter{page}{5}

\begin{abstract}

 We investigate the dynamics of several slender rigid bodies moving in a flow driven by the three-dimensional steady Stokes system in presence of a smooth background flow. More precisely we consider the limit where the thickness of these slender rigid bodies tends to zero with a common rate $\epsilon$, while their volumetric mass density is held fixed, so that the bodies shrink into separated massless curves. While for each positive $\epsilon$, the bodies' dynamics are given by the Newton equations and correspond to some coupled second-order ODEs for the positions of the bodies, we prove that the limit equations are decoupled first-order ODEs whose coefficients only depend on the limit curves and on the background flow. These coefficients appear through appropriate renormalized Stokes resistance tensors associated with each limit curve, and through renormalized Fax\'en-type force and torque associated with the limit curves and the background flow. We establish a rate of convergence of the curves  of order  $O( \vert \log \eps \vert^{-1/2})$. 
  We  also determine the limit effect due to the limit curves on the fluid,  in the spirit of the immersed boundary method.
 Both for the convergence of the filament velocities and the fluid velocities we identify an initial exponential relaxation within a  $O(\eps^2 \vert \log \eps \vert)$ time. 
\end{abstract}

\pagenumbering{arabic}\setcounter{page}{1}

\begin{changemargin}{-1cm}{-1cm}

\tableofcontents

\end{changemargin}

%%%%%%%%%%%%%

\section{Introduction}
\label{sec-intro}

In view of various applications in particular in biology (considering DNA) and in oceanography (considering sediment, plankton), the motion of rigid bodies with anisotropic shapes immersed in incompressible flows requests some mathematical analysis. 
In particular in view of modelling and numerics one may wish to consider asymptotic models where one gets rid of the dimensions corresponding to small extent of the rigid bodies. In this paper we consider the case of a  finite number $N$ of slender rigid bodies. We will use the terminology ``filaments". Their radii have the same smallness parameter $\eps$ in $(0 ,1)$ and shrink into curves in $\R^3$ as $\eps \rightarrow 0$. We will call these curves ``centerline curves" or ``filament centerlines" . We will assume: (i) that these filaments are immersed into an incompressible fluid driven by the steady Stokes system in presence of a background flow, (ii) that their dynamics is driven by the Newton equations with  forces acting on the filaments only due to the viscous stress tensor on their boundaries and (iii) that their volumetric mass density is independent of $\eps$, so that their limit centerline curves are massless. 
This leads to a system of second-order ODEs all coupled to each other through the Stokes equations. We refer to Section \ref{sec-setting} for a precise description of this so-called Newton-Stokes system, 
and for a straightforward local-in-time well-posedness result, see Lemma \ref{CL}. In particular this result establishes the existence of smooth solutions as long as there is no collision between filaments.

The main result of this paper, Theorem \ref{main} stated in Section \ref{sec-cv}, is the convergence of this Newton-Stokes system to a limit system describing the dynamics of the centerline  curves. We will also determine the limit effect due to these centerline  curves on the fluid,  in the spirit of the immersed boundary method. 

To give a flavour of our result to the reader we describe below how this limit system looks like. 
 The precise description of the limit dynamics is given in Section \ref{sec-mr-lim}.
To emphasize what concerns limit objects when $\eps\rightarrow 0$
we use the notation $\hat{\cdot}$ for a quantity $\cdot$. On the other hand, 
 to avoid heavy notations, we will only {make} the dependence on $\eps$ {explicit} when it is necessary in order to avoid confusion or to make precise in a quantitative way this dependence.  
Otherwise it {is} understood that the quantities at stake {can} depend on $\eps$ even if there is no corresponding index in the notation.
 The limit model drives the dynamics of the position at time $t$ of a collection of   smooth  curves $ \hat{{\mathcal C}}_i(t)$ without any self-intersection,  for  $ 1\leq i \leq N$, by the following rigid motion 
  \begin{align*} 
  \hat{{\mathcal C}}_i(t) = \hat{h}_i(t) + \hat{Q}_i(t) \bar {\mathcal C}_i .
  \end{align*}
 Here $\bar {\mathcal C}_i $ denotes a reference curve.
 On the other hand to emphasize what concerns limit objects when $\eps\rightarrow 0$ 
we use the notation $\hat{\cdot}$ for a certain quantity $\cdot$.
The vector $\hat{h}_i(t)$ in $\R^3$ and the matrix $\hat{Q}_i(t) $ in $SO(3)$ satisfy some first-order ODEs: 
  \begin{align} \label{in1}
 &\hat{h}_i'(t) = \hat{\textrm{v}}_i (t) ,  \quad
    \hat{Q}_i' (t) = (\hat{\omega}_i(t) \wedge \cdot) \hat{Q}_i (t),
     \,\\     \label{in2}
     &\text{and } \, 
  	(\hat{\textrm{v}}_i(t) , \hat{\omega}_i (t)) = F[\hat{{\mathcal C}}_i(t), u^\flat (t,\cdot) \vert_{\hat{{\mathcal C}}_i(t)}],
  \end{align}
where the notation $ (\hat{\omega}_i(t) \wedge \cdot)$
is used for the skew-symmetric matrix canonically associated with the wedge product by the vector $\hat{\omega}_i(t)$ in $\R^3$,  $u^\flat (t,\cdot) \vert_{\hat{{\mathcal C}}_i(t)}$ denotes the trace on $\hat{{\mathcal C}}_i(t)$ of  a smooth
 background flow velocity $u^\flat (t,\cdot)$. Moreover,  $F[\cdot,\cdot]$ is a universal operator acting on smooth simple curves and smooth incompressible vector fields.  This operator is given explicitly in \eqref{debaze}. 
Let us highlight that,  to ease   the  reading, in the left hand side of \eqref{in2}, and several more times below, 
we  identify $(v,w)$ with the corresponding column vector.
We emphasize that the limit dynamics \eqref{in1}-\eqref{in2} is a system of uncoupled first-order ODEs which reflects that both the mass of the filaments and their perturbation on the fluid tends to zero as $\eps$ converges to $ 0$.

  The main novelty of our work is to study the coupled dynamics of a collection of slender filaments. As far as we know, previous analytic works on filaments in Stokes flows are focused on static problems with only a single filament.
  Related to these works, the first part of our analysis establishes approximations for the forces and torques acting on the filaments as well as on the fluid perturbation caused by the filaments. 
  We show that explicit force distributions on the particle centerlines are sufficient to capture these quantities to leading order. This part of our analysis, carried out in Section \ref{sec:singularity.method}, is related to so-called slender body theory, and we believe that our results there, Theorem \ref{lem:slender.several} and Corollary \ref{prop:asymptotic.resistance} are of independent interest.  The second part of our analysis consists in the study of a system of singularly perturbed ODEs relying on a modulated energy argument. 
  We refer to Sections \ref{sec:strategy} and \ref{sec:structure} for a more extensive outline of the key elements and the structure of the proof of the main result,
  to Section \ref{compa-bib} for a discussion of some related results and to Section \ref{sec-mr-op} for some open problems.

\section{Setting of the problem}
\label{sec-setting}

This section is devoted to the description of the setting of the problem.

  \subsection{Geometry of the filaments}
  For each index $ 1\leq i \leq N$ 
  we consider a filament $\mathcal S_i$ which can be closed or non-closed.
 For $\eps \in (0 ,1)$, the filament is given in terms of a reference filament  $\bar{\mathcal S}_i$ which  is described by a centerline and a shape function for the cross section as follows. 
  For a non-closed filament, the centerline 
   $\bar {\mathcal C_i}$ is assumed to be a curve of length $L_i >0$, parametrized by arc length without self-intersections by a smooth function $\gamma_i \colon [0,L_i] \to \R^3$.
  We assume, for each index $ 1\leq i \leq N$, that the curve  $\bar {\mathcal C_i}$
  is not a straight line, i.e. $\gamma_i'' \neq 0$.
  The shape function is a smooth map $\Psi_i \colon [0,L_i] \times B_1(0) \to \R^2$, such that $\Psi_i(s,0) = 0$ and 
  $\Psi_i(s,\cdot)$ is a diffeomorphism to its image for all $s \in [0,L_i]$. Here, $B_1(0)$ denotes the open unit   ball in $\R^2$. Moreover, let $R_i \colon [0,L_i] \to SO(3)$ be a smooth function such that $R_i e_3 = \gamma_i'$, where $e_3=(0,0,1)$.
  Then, we define
\begin{align} \label{def:S^eps}
 {\bar{ \mathcal S}_i^\eps}= \bar{ \mathcal S}_i := \{ \gamma_i(s) + \eps  R_i(s)(\Psi_i(s,B_1(0)) \times \{0\}) : 0 \leq s \leq L_i \}. 
\end{align}

In the case of a closed filament, the definition is analogous but we replace the interval $[0,L_i]$ by $\R/L_i \Z$ for $\gamma_i$, $R_i$ and $ \Psi_i$.

\begin{remark} \label{rem:nonclosed.smooth}
    Note that the non-closed filaments are not smooth but only Lipschitz due to corners at their ends. With minor modifications of some arguments, our analysis also applies to smooth non-closed filaments, which could be defined as 
    \begin{align} 
 {\bar{ \mathcal S}_i^\eps} = \bar{ \mathcal S}_i := \{ \gamma_i(s) + \eps  a_\eps(s) R_i(s)(\Psi_i(s,B_1(0)) \times \{0\}) : 0 \leq s \leq L_i \}, 
\end{align}
where the additional function $a_\eps(s) : [0,L_i] \to \R$ is given by
\begin{align}
    a_\eps(s) = \begin{cases}
        \sqrt{\frac s \eps} &\quad \text { for } s \in [0,\eps], \\
        1 &\quad \text { for } s \in [\eps, L_i - \eps], \\
        \sqrt{\frac{L_i - s}{\eps}} & \quad \text{ for } s \in [L_i-\eps,L_i].
    \end{cases}
\end{align}
\end{remark}

We assume that the reference centerlines are centered at the origin in the sense that
\begin{align} \label{eq:center.mass.limit}
    \int_{\bar {\mathcal C}_i} x  \dd \mathcal H^1 = 0 ,
\end{align}
where $\dd \mathcal{H}^1$ is  the one-dimensional Hausdorff measure. 
We emphasize that the center of mass of the reference filaments $\mathcal S_i$, for  $ 1\leq i \leq N$, depends on $\eps$. For simplicity, we assume that the mass density is constant in each of the filaments. Then, their centers of mass are given as the following barycenters
\begin{align}
    \bar h_{i,\eps} := \fint_{\bar {\S}_i} x \dd x. 
\end{align}
By \eqref{eq:center.mass.limit}, we have
\begin{align} \label{eq:center.mass.small}
    |\bar h_{i,\eps}| \leq C \eps,
\end{align}
where the constant $C$ depends only on the functions specifying the reference filament, i.e.  $L_i$, $\gamma_i$, $R_i$, and $\Psi_i$.

We are interested in the limit of the dynamics (specified below) as $\eps \to 0$ for given, $\eps$-independent initial data for the centerlines of the filaments. More precisely, 
we fix $\hat h_i(0) \in \R^3$, $\hat Q_i(0) \in SO(3)$ such that
\begin{align}
\label{dimathC}
  \hat{\mathcal  C}_i (0) =  \mathcal C_i (0) := \hat h_i(0) + \hat Q_i(0) \bar {\mathcal C}_i ,
\end{align}
are the positions of the centerlines at time $0$ for all $\eps >0$.

Then, the center of mass $h_{i,\eps}(t)$ and the orientation  $Q_{i,\eps}(t)$ of the filament at time $t$ have initial data
\begin{align} \label{eq:initial.positions}
    h_{i,\eps}(0) = \hat h_i(0) + {\hat Q_{i}(0)} \bar h_{i,\eps} \quad \text{ and } \quad  Q_{i,\eps}(0) = \hat Q_i(0),
\end{align}
and we denote the filament at time $t$ with parameter $\eps$ by
  \begin{align} \label{def:Stourne}
  \mathcal S_i(t) :=  \mathcal S_{i,\eps}(t) = h_{i,\eps}(t) + {Q_{i,\eps}}(t) (\bar {\mathcal S}_i - \bar h_{i,\eps}) ,
  \end{align}
  and similarly for the centerline $\mathcal C_i(t)$: 
  \begin{align*} 
  \mathcal C_i(t) :=  \mathcal C_{i,\eps}(t) = h_{i,\eps}(t) + {Q_{i,\eps}}(t) (\bar {\mathcal C}_i - \bar h_{i,\eps}) .
  \end{align*}

\begin{figure}  

\centering

\begin{tikzpicture}

\draw[line width=9pt, blue!30] (0,0.44) to[out=0, in=240] (1, 0.3) 
to[out=60, in=300] (2, 1.52)
to[out=120, in=0] (1.53, 3)
to[out=180, in=40] (0.44, 2.03)
to[out=220, in=0] (-0.63, 2.61) 
to[out=180, in=60] (-1.63, 1.02) 
to[out=-120, in=180] (0, 0.44) ;

\draw (0,0.4) to[out=0, in=240] (1, 0.3) 
to[out=60, in=300] (1.97, 1.5)
to[out=120, in=0] (1.5, 3)
to[out=180, in=40] (0.4, 2)
to[out=220, in=0] (-0.6, 2.6) 
to[out=180, in=60] (-1.6, 1) 
to[out=-120, in=180] (0, 0.4) ;

\draw[red] plot [smooth cycle, tension=0.6] coordinates {(4.4,0.4) (5,0.2) (5.8,0.4) (6.5,0.5)(6.4,1.1)  (5.9,1.2) (5.4,1.4) (5,0.9) (4.6,0.8) };
\draw[fill] (5.2,0.6) circle [radius=0.025];

\draw[red] plot [smooth cycle, tension=0.6] coordinates {(4.4,0.4) (5,0.2) (5.8,0.4) (6.5,0.5)(6.4,1.1)  (5.9,1.2) (5.4,1.4) (5,0.9) (4.6,0.8) };

\begin{scope}[scale=0.17]
\draw[shift={(6,9)}, red] plot [smooth cycle, tension=0.6] coordinates {(4.4,0.4) (5,0.22) (5.8,0.4) (6.5,0.48)(6.4,0.97)  (5.9,1.07) (5.4,1.2) (5,0.8) (4.6,0.72) };
\end{scope}

\draw[dashed] (1.9,1.55)--(4.5,0.3);
\draw[dashed] (1.9,1.75)--(5.4,1.4);
\draw[dash pattern={on 1pt off 1pt}] (-0.9,0.55)--(-1,1);
\draw[dash pattern={on 1pt off 1pt}] (0.2,1.42)--(0.3,1.94);

\node at (0.2,1) [anchor=south] {{$\bar{\mathcal C}_i$}};
\node at (-1.3,1.2) [anchor=west, blue] {{$\bar {\mathcal S}_i$}};

\node at (5.6,0.5) [anchor=south] {{$\gamma_i(s)$}};

;\end{tikzpicture}

  \caption{A closed reference filament}

\end{figure}
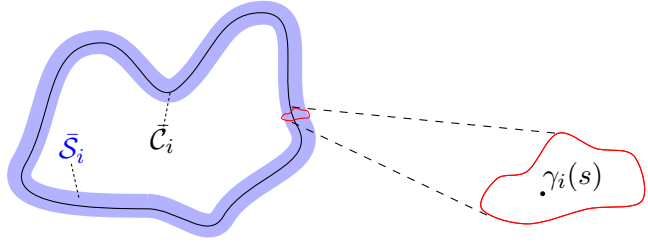

 \subsection{Kinematics  of the filaments}
For any $t \geq 0$, we denote by  $\omega_i(t)$ in $\R^3$ the unique angular velocity
of the $i$-th filament such that  
\begin{equation}
  \label{satane}
 Q_{i,\eps}' (t) Q_{i,\eps}^{T} (t)  = (\omega_i(t) \wedge \cdot) ,
 \end{equation}
where $Q_{i,\eps}^{T}$ denotes the transpose matrix of $Q_{i,\eps}$ and $ (\omega_i(t) \wedge \cdot)$ denotes the skew-symmetric matrix canonically associated with the wedge product by vector $\omega_i(t)$. 
We also set
\begin{equation}
  \label{det-v-rm}
 \textrm{v}_i(t) := h_{i,\eps}'(t)  . 
\end{equation}
Accordingly, the solid velocities are given by
\begin{equation}
  \label{vit-sol}
v^{{\mathcal S}_i}(t,x) := \textrm{v}_i(t)  + \omega_i(t) \wedge (x-h_{i,\eps}(t)),
\end{equation}
for all $x\in\mathcal S_i(t)$.

We highlight  that all these quantities depend implicitly on $\eps$ which we usually omit in the notation except for the quantities $h_{i,\eps},Q_{i,\eps}$. For these, we will always write the $\eps$ to avoid confusion with functions depending on variables $h_i,Q_i$ that will appear later.

 \subsection{Inertia  of the filaments}
We assume that the filaments' volumetric  density is fixed, and we 
denote  by  $\eps^2 m_{i} >0$ the mass of $\mathcal S_i$ and by
 $\eps^2 {\mathcal J}_{i} (t)$
 the  inertial matrix at time $t \geq 0$, so that $m_{i}$ and ${\mathcal J}_{i}$ are of order one with respect to $\eps$.
 Moreover  the matrix
 ${\mathcal J}_{i}$ is  positive definite, uniformly in $\eps$ (this only fails if $\gamma_i$ was a straight line, which has been explicitly excluded)
 and evolves in time 
 according to Sylvester's law:
\begin{align} \label{eq:Sylvester}
{\mathcal J}_{i} (t) = Q_{i,\eps}(t)  \mathcal{J}_{0,i} Q_{i,\eps}^{T} (t) ,
\end{align}
where $ \mathcal{J}_{0,i} $ denotes the initial value $\mathcal{J}_{0,i}  := {\mathcal J}_{i} (0)$.

  \subsection{Ambient fluid}
We assume that, for any $t\geq0$, 
 the open set 
$$\mathcal{F}(t) := \R^3 \setminus  \cup_i \mathcal S_i (t) ,$$
 is occupied by a  fluid whose velocity $u$ and pressure $p$  are given as the sums
\begin{equation}
  \label{vit-tot}
u := u^\flat + u^\mathfrak{p}  \quad   \text{ and } \quad  p := p^\flat + p^\mathfrak{p} ,
\end{equation}
where
 \begin{gather} \nonumber 
  (u^\flat ,p^\flat ) \in C \big( [0,  {+ \infty});{\dot H^1 ( \R^3) {\times L^2(\R^3)} }) \cap  \big( W^{2,\infty}((0,+\infty)\times \R^3) \times W^{1,\infty}((0,+\infty)\times \R^3)\big) 
  \\ \text{  satisfying }  \div u^\flat = 0   ,  \label{regover}
\end{gather}
 is the background flow, and $(u^\mathfrak{p} , p^\mathfrak{p})$ 
is the perturbation flow due to the filaments,  whose evolution is assumed to be  driven by the steady Stokes equations:
\begin{subequations}
\label{intrau-tot}
\begin{gather}
\label{intrau1}
\displaystyle  - \Delta u^\mathfrak{p}   + \nabla p^\mathfrak{p} =0 \quad  \text{ and } \quad \operatorname{div} u^\mathfrak{p}   = 0  \quad \text{in }\mathcal{F}(t), \\
\label{intrau1bis}
u^\mathfrak{p} = v^{{\mathcal S}_i}  - u^\flat \quad \text{in}\ \    \mathcal{S}_i  (t), \ \ 1 \leq i \leq N. 
\end{gather}
\end{subequations}

 \subsection{Dynamics of the filaments}
The filaments  are assumed to be only accelerated, for any $t\geq0$, by the force exerted by the fluid  on their
boundaries $\partial  \mathcal S_i (t)$  according to  the Newton equations: 
\begin{subequations}
 \label{intrauN}
\begin{gather}
 \label{intrau2}
 \eps^2 m_i \textrm{v}_i' (t) =  -\int_{\partial  \mathcal S_i (t)} \Sigma(u,p)  n \,d\mathcal{H}^2 , 
 \\  \label{intrau2bis}
 \eps^2 (\mathcal{J}_i \omega_i)' (t) = -\int_{\partial  \mathcal S_i (t)} (x-h_{i,\eps}(t)) \wedge \Sigma(u,p)  n \,d\mathcal{H}^2 ,
\end{gather}
\end{subequations}
where $d\mathcal{H}^2$ is  the two-dimensional Hausdorff measure and $n$ denotes  the unit normal vector on $\partial  \mathcal S_i (t) $ 
 pointing outside the fluid domain $\mathcal{F}(t) $ and
\begin{eqnarray}
\label{tensor}
 \Sigma(u,p) := 2 D(u) - p \Id ,
\end{eqnarray}
 where $D(u)$ is the deformation tensor defined by 
\begin{eqnarray}
\label{defo}
 D(u) :=  \frac{1}{2} ( \partial_{j} u_{i} +  \partial_{i} u_{j}  )_{1 \leqslant i,j \leqslant  3} .
\end{eqnarray}

 \subsection{The whole Newton-Stokes  system at a glance}
Gathering  \eqref{def:Stourne}, \eqref{satane},   \eqref{det-v-rm}, \eqref{vit-sol}, \eqref{eq:Sylvester},   \eqref{vit-tot}, \eqref{intrau-tot} and  \eqref{intrauN}
we arrive at the following 
Newton-Stokes  system.

 For $1 \leq i \leq N$,
\begin{subequations}
 \label{N-S}
  \begin{align} 
  h_{i,\eps}' (t)&= \textrm{v}_i (t) , \label{e.hi}
  \\ Q_{i,\eps}' (t)    &= (\omega_i(t) \wedge \cdot) Q_{i,\eps} (t),\label{e.Qi}
  \\ \eps^2 \, m_i \,  \textrm{v}_i' (t) &= - \int_{\partial  \mathcal S_i (t)} \Sigma(u^\flat + u^\mathfrak{p},p^\flat + p^\mathfrak{p})  n \,d\mathcal{H}^2 , \label{eq:v'}
 \\   \eps^2 (\mathcal{J}_i \omega_i)' (t) &= - \int_{\partial  \mathcal S_i (t)} (x-h_{i,\eps} (t)) \wedge \Sigma(u^\flat + u^\mathfrak{p},p^\flat + p^\mathfrak{p})  n \,d\mathcal{H}^2 , \label{eq:omega'}
 \end{align}
 \begin{gather} \text{ where } {\mathcal J}_{i} (t) = Q_{i,\eps}(t)  \mathcal{J}_{0,i} Q_{i,\eps}^{T} (t) \, 
\text{ and } \, \mathcal S_i(t) = h_{i,\eps}(t) + Q_{i,\eps}(t) (\bar {\mathcal S}_i - \bar h_{i,\eps}) \label{eq:J(t)},
\end{gather}
and
    \begin{gather} \label{ssys1}
    \displaystyle  - \Delta u^\mathfrak{p}   + \nabla p^\mathfrak{p} =0 \quad  \text{ and } \quad \operatorname{div} u^\mathfrak{p}   = 0  \quad \text{in }\mathcal{F}(t), \\ \label{bc-u}
u^\mathfrak{p} = v^{{\mathcal S}_i}  - u^\flat \quad \text{for}\ \  x\in  \mathcal{S}_i  (t) ,
\quad \text{ for  } \, 1 \leq i \leq N,
\\  \text{ where } v^{{\mathcal S}_i}(t,x) := \textrm{v}_i(t)  + \omega_i(t) \wedge (x-h_{i,\eps}(t))   \,    \text{ for } \,  x \in \mathcal S_i (t). \label{eq:v^S}
\end{gather}
\end{subequations}
A reformulation of the Newton equations  \eqref{eq:v'}-\eqref{eq:omega'} into a 
compact form, involving in particular the so-called Stokes resistance matrices,   will be  given in Section \ref{refor}.

 \subsection{A local-in-time well-posedness result}
Despite its apparent complexity,  the system \eqref{N-S} can be considered as a system of second-order quasilinear ODEs on the $6N$ degrees of freedom of the rigid bodies, the fluid state being  
given by an auxiliary steady Stokes system for which time only appears as a parameter. 
Moreover the coefficients of this ODE, although
their coefficients are given in a rather non  explicit way, 
are smooth as long as the filaments $\mathcal S_i (t)$ remain separated; this follows from standard results on the regularity with respect to shape changes for which we refer for example to \cite{simon,BNDRDM1,BNDRDM2}.
Therefore it 
follows from the Cauchy-Lipschitz theorem that,
starting from separated positions with arbitrary velocities, 
we have 
the following local-in-time well-posedness result.
\begin{Proposition} \label{CL}
For each $\eps$ in $(0,1)$, 
given some initial  disjoint positions and some initial velocities of the filaments, 
there is $T^{\max}_\eps \in (0,+\infty]$ and a unique smooth solution to    \eqref{N-S} on $[0,T^{\max}_\eps)$. Moreover, if $ T^{\max}_\eps 
<+\infty$, then 
\begin{align} \label{explo}
	\lim_{t \to T_\eps^{\max}} \min_{i \neq j} \dist(\mathcal S_i(t),\mathcal S_j(t)) = 0.
\end{align}
\end{Proposition}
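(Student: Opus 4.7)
The plan is to view the Newton-Stokes system \eqref{N-S} as a second-order ODE on the $6N$-dimensional configuration manifold $\mathcal M := (\R^3 \times SO(3))^N$, with the Stokes problem \eqref{ssys1}--\eqref{bc-u} playing only a parametric role. For any configuration $X = (h_i,Q_i)_{i=1}^N \in \mathcal M$ whose associated filaments $\mathcal S_i(X)$, as defined by \eqref{def:Stourne}, are pairwise disjoint, and for any collection of solid velocities $V = (\textrm{v}_i,\omega_i)_{i=1}^N$, the exterior Stokes problem \eqref{ssys1}--\eqref{bc-u} with boundary data prescribed through \eqref{eq:v^S} and with decay at infinity is uniquely solvable in $\dot H^1 \times L^2$; the forces and torques appearing in \eqref{eq:v'}--\eqref{eq:omega'} then become explicit functions of $(t,X,V)$ alone, linear in $V$ and in the trace of $u^\flat(t,\cdot)$ on $\partial \mathcal S_i(X)$. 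Together with the kinematic equations \eqref{e.hi}--\eqref{e.Qi} and Sylvester's law \eqref{eq:J(t)}, this yields a second-order system $(\dot X,\dot V) = \mathcal G(t,X,V)$ on the open set $\mathcal U \subset \mathcal M$ where the filaments are disjoint.

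The core step is to show that $\mathcal G$ is smooth on $[0,+\infty)\times\mathcal U\times\R^{6N}$, which reduces to the shape-regularity of the exterior Stokes problem with respect to $X$. This is the main obstacle and is handled by the classical approach of \cite{simon,BNDRDM1,BNDRDM2}: fix a reference $X_0 \in \mathcal U$ and, for $X$ close to $X_0$, construct a smooth family of diffeomorphisms $T_X \colon \R^3 \to \R^3$ equal to the rigid motion $y \mapsto h_i + Q_i Q_{i,0}^{T}(y-h_{i,0})$ in a neighborhood of $\mathcal S_i(X_0)$ and equal to the identity outside a slightly larger neighborhood of $\cup_i \mathcal S_i(X_0)$. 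Pulling back \eqref{ssys1}--\eqref{bc-u} through $T_X$ produces a Stokes-type system on the fixed exterior domain $\R^3 \setminus \cup_i \mathcal S_i(X_0)$ whose coefficients depend smoothly on $X$; an implicit function theorem in standard variational spaces then gives smoothness of $X \mapsto (u^\mathfrak{p},p^\mathfrak{p})$, and after differentiating the surface integrals one obtains smoothness of the forces and torques. The generalized inertia operator $(\eps^2 m_i \Id,\eps^2 \mathcal J_i(t))$ is invertible since $m_i > 0$ and $\mathcal J_{0,i}$ is positive definite by the non-degeneracy hypothesis $\gamma_i'' \neq 0$, so the system can be put in normal form with smooth right-hand side.

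Cauchy--Lipschitz then yields a unique maximal smooth solution on $[0,T^{\max}_\eps)$. For the blow-up alternative \eqref{explo}, the standard continuation argument says that $T^{\max}_\eps < +\infty$ forces $(X(t),V(t))$ to exit every compact subset of $\mathcal U \times \R^{6N}$. To rule out blow-up of $V$ alone, I would exploit the linearity of the Stokes problem in the boundary velocity data: the equations for $V$ then take an affine form $\eps^2 \dot V = A(t,X)V + b(t,X,u^\flat)$ whose coefficients are uniformly bounded as long as $X$ stays in a compact subset of $\mathcal U$ and $u^\flat$ is bounded in the sense of \eqref{regover}. A Gr\"onwall argument bounds $|V(t)|$ on such intervals, and integration in time controls the centers of mass $h_{i,\eps}(t)$; hence the only way the trajectory can escape every compact of $\mathcal U$ in finite time is through collision, i.e.\ $\min_{i\neq j}\dist(\mathcal S_i(t),\mathcal S_j(t)) \to 0$ as $t \to T^{\max}_\eps$, which is precisely \eqref{explo}.
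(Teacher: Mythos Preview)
Your reduction to an ODE and your treatment of the existence/uniqueness part are essentially what the paper does (shape regularity via \cite{simon,BNDRDM1,BNDRDM2} plus Cauchy--Lipschitz), and that part is fine.

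The gap is in your blow-up argument. You assert that ``the equations for $V$ then take an affine form $\eps^2 \dot V = A(t,X)V + b(t,X,u^\flat)$'' with $V=(\textrm v_i,\omega_i)$. This is not correct: the rotational Newton equation \eqref{eq:omega'} reads $\eps^2(\mathcal J_i\omega_i)'=T_i$, and expanding gives
\[
\eps^2 \mathcal J_i \,\omega_i' \;=\; T_i \;-\; \eps^2\,\omega_i\wedge(\mathcal J_i\omega_i),
\]
since $\mathcal J_i' = (\omega_i\wedge\cdot)\mathcal J_i - \mathcal J_i(\omega_i\wedge\cdot)$ by Sylvester's law. The hydrodynamic torque $T_i$ is indeed affine in $(\textrm v,\omega)$, but the gyroscopic term $\omega_i\wedge(\mathcal J_i\omega_i)$ is \emph{quadratic} in $\omega_i$. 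A naive Gr\"onwall on $|V|$ then leads to a Riccati-type inequality $\tfrac{d}{dt}|V|\le C(1+|V|+|V|^2)$, which does not preclude finite-time blow-up of $V$ while $X$ stays in a compact of~$\mathcal U$.

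There are two easy repairs. One is to change variables to the angular momenta $L_i:=\mathcal J_i\omega_i$: then $\eps^2 L_i'=T_i$ is genuinely affine in $(\textrm v,L)$ with $X$-dependent coefficients (since $\omega=\mathcal J^{-1}(X)L$), and your Gr\"onwall argument goes through. The other is the paper's route: multiply \eqref{eq:v'}--\eqref{eq:omega'} by $(\textrm v_i,\omega_i)$, sum, integrate by parts using the Stokes equations, and obtain the energy identity \eqref{nrj}; the gyroscopic term disappears because $\omega_i\cdot(\omega_i\wedge\mathcal J_i\omega_i)=0$, and Cauchy--Schwarz/Young/Gr\"onwall then bound the kinetic energy as long as the filaments stay apart. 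The energy approach has the bonus of controlling $\int_0^t\|Du^\mathfrak{p}\|_{L^2}^2$ as well.
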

\begin{proof}
As mentioned above, the existence, locally in time, of a  smooth solution to \eqref{N-S} is a straightforward consequence of the classical regularity properties of  the Stokes system and of the Cauchy-Lipschitz theorem. It remains to prove 
the last statement regarding the lifetime of these solutions. 
To this end,  we multiply, for  $ 1\leq i \leq N$,
the equation  \eqref{eq:v'} by 
 $\textrm{v}_i $ and the equation  
  \eqref{eq:omega'} by $\omega_i$.   
  By summing the resulting identities, and recalling \eqref{eq:v^S}, we get
  \begin{align} \nonumber
\eps^2 \sum_{1\leq i \leq N}  \Big(   \frac12 m_i \textrm{v}_i^2
 +   \mathcal{J}_i \omega_i \cdot \omega_i \Big)'
  =\ &- \sum_{1\leq i \leq N} \int_{\partial  \mathcal S_i (t)}  v^{{\mathcal S}_i}
 \cdot \Sigma(u^\mathfrak{p} + u^\flat,p^\mathfrak{p} + p^\flat)  n \,d\mathcal{H}^2 
 \\ \nonumber
 =\ &- \sum_{1\leq i \leq N} \int_{\partial  \mathcal S_i (t)}   u^\mathfrak{p}
 \cdot  \Sigma(u^\mathfrak{p},p^\mathfrak{p})  n \,d\mathcal{H}^2 \\  \nonumber
\quad &- \sum_{1\leq i \leq N} \int_{\partial  \mathcal S_i (t)}   u^\flat
 \cdot  \Sigma(u^\mathfrak{p},p^\mathfrak{p})  n \,d\mathcal{H}^2 
\\ \quad &- \sum_{1\leq i \leq N} \int_{\partial  \mathcal S_i (t)}   v^{{\mathcal S}_i}
 \cdot \Sigma(u^\flat,p^\flat)   n \,d\mathcal{H}^2 ,
 \label{labpalab}
 \end{align}
 thanks to   \eqref{bc-u}. 
 Hence, integrating by parts 
 in ${\mathcal F}$ for the two first terms in the right hand side of \eqref{labpalab} 
 taking into account \eqref{ssys1}, and integrating by parts in $\cup_i {\mathcal S}_i$ for the last term  in the right hand side of \eqref{labpalab}, we arrive at
  \begin{gather} \label{nrj}
  \eps^2 \sum_{1\leq i \leq N}  \Big(   \frac12 m_i \textrm{v}_i^2
 +   \mathcal{J}_i \omega_i \cdot \omega_i \Big)' + \int_{\mathcal F}  D u^\mathfrak{p} : D u^\mathfrak{p}
 =\   - \int_{\mathcal F}  D u^\flat : D u^\mathfrak{p} 
 \\ \nonumber \quad -   \sum_{1 \leq i \leq N} \int_{\mathcal S_i} (-\Delta u^\flat + \nabla p^\flat) \cdot  v^{{\mathcal S}_i}. 
\end{gather}
 This identity holds true 
 as long as there is no collision. 
Then, by the Cauchy-Schwarz inequality, Young's inequality for products and a Gronwall argument, we deduce that the function
$$ 
t \mapsto  \eps^2 \sum_{1\leq i \leq N}  \Big(   \frac12 m_i \textrm{v}_i^2
 +   \mathcal{J}_i \omega_i \cdot \omega_i \Big) (t) + \int_0^t \int_{\mathcal F}  D u^\mathfrak{p} : D u^\mathfrak{p},
 $$
 remains bounded as long as there is no collision. Then it follows from classical blowup criteria for ODEs that the solution can be continued as long as there is no collision. In particular, if the maximal lifetime $ T^{\max}_\eps $ of the smooth solution to    \eqref{N-S}  satisfies $ T^{\max}_\eps < +\infty$, then 
 \eqref{explo} holds true. 
\end{proof}
\begin{remark}
\label{rk-nrj}
 It is worth to observe that the energy identity \eqref{nrj} used in the proof above,  alone, is not sufficient to obtain bounds on the filament velocities which are uniform with respect to $\eps$ as the $O(1)$ energy transfer with the background flow, see the right hand side of \eqref{nrj}, that may \textit{a priori} lead to high velocities due to the factor $\eps^2$ associated with the filaments' inertia. 
\end{remark}
%

%%%
\section{Main results}
\label{sec-mr}

This section is devoted to the statements of the main results of the paper. 
More precisely, the main result of this paper, that is 
 the convergence of the Newton-Stokes system to a limit system as the thickness parameter $\eps$ goes to $0$,  is given in Section \ref{sec-cv}, in particular in Theorem \ref{main}.
To state this result, a few notations have to be introduced, which is the subject to the Subsections \ref{sec-gn}, \ref{sec-mr-lim} and \ref{sec-afb}.
In Section \ref{sec:strategy} we expose the strategy of the proof of Theorem \ref{main} by considering a toy model. 
The organization of the proof of Theorem \ref{main}  is detailed in Section 
\ref{sec:structure}. 
We will also draw some comparisons with the existing literature on close issues, see Subsection \ref{compa-bib},  and we will finally mention a few open problems, see Section \ref{sec-mr-op}.

\subsection{A few general notations}
\label{sec-gn}

First we introduce, for $1 \leq i \leq N$, 
the vector fields:
\begin{equation} \label{t1.6}
v_{i,\alpha}{[h_i]} (x) := \left\{\begin{array}{ll} 
e_\alpha & \text{if} \ \alpha =1,2,3 ,\\ \relax
  e_{\alpha-3} \wedge (x - h_i) & \text{if} \ \alpha=4,5,6 ,
\end{array}\right.
\end{equation}
where $e_\alpha $, for $\alpha=1,2,3$, denotes the $\alpha$-th unit vector of the canonical basis of $ \R^{3}$.
These vector fields are elementary rigid velocities with respect to the $i$-th filament.

We define, for $p$ in $\R^3$, the $3 \times 3$ matrix
\begin{equation} \label{ki}
k(p) := 8\pi\left(\Id - \frac12 p \otimes  p\right).
\end{equation}
One may wonder right away why such a matrix $k$  could occur in the discussion in the present setting. It is actually related to the Stokes kernel $S$, defined for $x$ in $\R^3 \setminus \{ 0 \}$, by 
\begin{equation}\label{e.defS}
S(x) = \frac 1 {8 \pi |x|} \left( \Id + \frac x {|x|} \otimes \frac x {|x|} \right) 
	=   \frac 1 {|x|}  S_0\left( \frac x {|x|} \right) ,
\end{equation}
where 
for $p$ in the euclidean unit  sphere $S^2$,
\begin{align}
	\label{notsu}
	S_0(p) := \frac 1 {8 \pi}(\Id + p \otimes p).
\end{align}
Then, by the Sherman-Morrison formula, we observe that
for any $p$ in $S^2 $,
 the matrix $S_0(p)$  is invertible and its inverse is precisely $k(p)$ defined above. 
As a matter of fact,  the use of the identity 
\begin{align} \label{eqqe}
\forall p\in S^2, \quad  S_0(p) k(p) = \Id ,
 \end{align}
is crucial in our analysis below, see \eqref{here}.
\ \par \
Next we associate with a smooth oriented curve $\mathcal{C}$ without  self-intersections and with two vector fields $ v$ and $\tilde v$ defined on $\mathcal{C}$ with values in $\R^3$, the following real-valued functional: 
	\begin{align} 
	\label{lau}
		I_{\mathcal C} [v,\tilde{v}]   &:=   \frac12
	 \int_{\mathcal{C}} k(\tau) v \cdot \tilde{v}   \, \dd \mathcal{H}^1   ,
	\end{align}
where we recall that $\dd \mathcal{H}^1$ is  the one-dimensional Hausdorff measure  and $\tau$ denotes the unit tangent vector field along $\mathcal{C}$. Since the matrix $k$ is symmetric, the operator $I_{\mathcal C} [\cdot,\cdot]$ is bilinear symmetric.

\subsection{Limit dynamics}
\label{sec-mr-lim}

Now {we define} the objects which occur in the limit dynamics of the filaments when the thickness parameter $\eps$ converges to $0$. To do so, we will define several functions depending on the filament positions denoted by $(h_i,Q_i)$, for  $1 \leq i \leq N$. 

For $1 \leqslant \alpha \leqslant 6 $ and for  $1 \leq i,j \leq N$, we set 
	\begin{align} \label{hatK}
	 \hat{\mathcal K}_{i,\alpha,j,\beta}  ({h_i,Q_i}) &:=  
	  \delta_{ij} \, 
	I_{\mathcal{C}_i  (h_i,Q_i)}  \big[v_{i,\alpha}[h_i],  v_{i,\beta}[h_i]\big]  ,
	\end{align}
	where $\delta_{ij}$ is the Kronecker symbol, and, 
 for $ \alpha =1,2,3 $, for  $1 \leq i \leq N$, for $t\geq 0$, 
\begin{align}
\label{hatfab}
 \hat{F}^\flat_{i,\alpha} ({t,h_i,Q_i}) := 
 	I_{\mathcal{C}_i  (h_i,Q_i)}  [ v_{i,\alpha}[h_i] ,
u^\flat(t)  ]  
\quad \text{ and } \quad \\ 
 \hat{T}^\flat_{i,\alpha} ({t,h_i,Q_i})   := 
 I_{\mathcal{C}_i  (h_i,Q_i)}  [   v_{i,\alpha+3}[h_i] ,
  u^\flat (t)   ]  .
\end{align}
Let us emphasize that the matrices 
$ \hat{\mathcal K}_{i,\alpha,i,\beta} $ and the vectors $\hat{F}^\flat_{i,\alpha} $ and $\hat{T}^\flat_{i,\alpha}$ do not depend on $\eps$ but on the positions, considered here as variables, of the filament centerlines denoted by $\mathcal{C}_i  (h_i,Q_i)$ and defined by 
\begin{align*}
  \mathcal{C}_i  (h_i,Q_i) :=  h_i +  Q_i \bar {\mathcal C}_i .
\end{align*}
In addition, the vectors $\hat{F}^\flat_{i,\alpha} $ and $\hat{T}^\flat_{i,\alpha}$ depend explicitly on time through the time dependence of $u^\flat$.
We refer to them respectively as the 
 Stokes resistance matrix associated with the filament centerline $\mathcal{C}_i $, and 
 the   Fax\'en force and torque  associated with the filament centerline $\mathcal{C}_i$ and with 
 the background flow $u^\flat$. 
 
 For  $1 \leq i \leq N$, let us consider the $6 \times 6$ matrices  diagonal blocks
\begin{align}  \label{hatKblock}
\hat{\mathcal K}_{i,i}{=\hat{\mathcal K}_{i,i}(h_i,Q_i)} := (\hat{\mathcal K}_{i,\alpha,i,\beta})_{1 \leq  \alpha,\beta \leq 6} ,
\end{align}
and, for  $1 \leq i \leq N$, 
the vectors of $\R^6$: 
\begin{align} \label{def:f.hat.flat.frac}
\hat{\mathfrak{f}}_i^\flat {=\hat{\mathfrak{f}}_i^\flat(t,h_i,Q_i) }:= ((\hat{F}^\flat_{i,\alpha})_{1 \leq  \alpha\leq 3} , (\hat{T}^\flat_{i,\alpha})_{1 \leq  \alpha\leq 3}).
\end{align}
By a change of coordinates, it is easy to see that $\hat {\mathcal K}_{i,i}$ 
satisfies
\begin{align} \label{eq:Sylvester.K}
     \hat{\mathcal K}_{i,i}(h_i,Q_i)
     = \begin{pmatrix} Q_i & 0 \\ 0 & Q_i \end{pmatrix}   \hat{\mathcal K}_{i,i}(0, \Id) \begin{pmatrix} Q_i^T & 0 \\ 0 & Q_i^T \end{pmatrix}. 
\end{align}
\begin{Lemma} \label{hat-inverse}
For any $(h_i,Q_i)$ in $\R^3 \times SO(3)$, for $1 \leq i \leq N$,  the matrix $\hat{\mathcal K}_{i,i}(h_i,Q_i)$ is symmetric positive definite. 
\end{Lemma}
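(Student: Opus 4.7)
The plan is to prove both symmetry and positive definiteness by rewriting the quadratic form associated with $\hat{\mathcal K}_{i,i}$ as the functional $I_{\mathcal C_i}$ evaluated on rigid velocity fields, and then to exploit the pointwise positivity of the matrix $k(\tau)$ together with the assumption $\gamma_i'' \neq 0$.

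For symmetry, I would simply note that $I_{\mathcal C}[\cdot,\cdot]$ is bilinear symmetric (observed right after \eqref{lau} using the symmetry of $k$), so by \eqref{hatK} the matrix $\hat{\mathcal K}_{i,i}(h_i,Q_i)$ is symmetric.

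For positive definiteness, given $\xi = (\xi^{\mathrm t},\xi^{\mathrm r}) \in \R^3 \times \R^3$, define the rigid velocity field
\begin{equation*}
 V_\xi(x) \;:=\; \sum_{\alpha=1}^{6} \xi_\alpha\, v_{i,\alpha}[h_i](x) \;=\; \xi^{\mathrm t} + \xi^{\mathrm r} \wedge (x - h_i),
\end{equation*}
so that, by bilinearity,
\begin{equation*}
 \xi \cdot \hat{\mathcal K}_{i,i}(h_i,Q_i)\, \xi \;=\; I_{\mathcal C_i(h_i,Q_i)}[V_\xi,V_\xi] \;=\; \frac{1}{2} \int_{\mathcal C_i(h_i,Q_i)} k(\tau) V_\xi \cdot V_\xi \,\dd \mathcal H^1 .
\end{equation*}
The key elementary observation is that for every $p \in S^2$ and every $w \in \R^3$, by \eqref{ki},
\begin{equation*}
 k(p) w \cdot w \;=\; 8\pi |w|^2 - 4\pi (p \cdot w)^2 \;\geq\; 4\pi |w|^2,
\end{equation*}
by Cauchy--Schwarz. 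Hence $k(\tau)$ is uniformly positive definite along the curve, which immediately gives $\xi \cdot \hat{\mathcal K}_{i,i}\, \xi \geq 0$, and equality forces $V_\xi \equiv 0$ on $\mathcal C_i(h_i,Q_i)$.

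The main (but still mild) obstacle is to turn the pointwise vanishing of $V_\xi$ into $\xi = 0$. For any two points $x_1,x_2 \in \mathcal C_i(h_i,Q_i)$, the identity $V_\xi(x_1)=V_\xi(x_2)=0$ yields $\xi^{\mathrm r} \wedge (x_1 - x_2) = 0$. If $\xi^{\mathrm r} \neq 0$, all such chords would be parallel to $\xi^{\mathrm r}$, i.e.\ the image of $\gamma_i$ after rotation and translation would be contained in a single affine line, contradicting the standing assumption $\gamma_i'' \neq 0$ stated below \eqref{def:S^eps}. Therefore $\xi^{\mathrm r}=0$, and then $V_\xi \equiv 0$ forces $\xi^{\mathrm t}=0$, completing the proof. (Alternatively one can differentiate $V_\xi(\gamma_i(s))=0$ along the arclength and directly read off $\xi^{\mathrm r} \wedge \tau(s) = 0$, hence $\tau$ constant, contradicting $\gamma_i'' \neq 0$.)
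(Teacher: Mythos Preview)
Your proof is correct and follows essentially the same approach as the paper: both arguments hinge on the pointwise bound $k(p)\geq 4\pi\,\Id$ to reduce positivity of the quadratic form to showing that the rigid field $V_\xi$ cannot vanish identically on a non-straight centerline. The only cosmetic difference is that the paper first invokes \eqref{eq:Sylvester.K} to reduce to $(h_i,Q_i)=(0,\Id)$, whereas you work directly at general $(h_i,Q_i)$, which is equally valid.
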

\begin{proof}
By \eqref{eq:Sylvester.K}, it suffices to consider the case where $(h_i,Q_i) = (0,\Id)$, and we will omit to write this variable.
Since for any {$p\in S^2$}, the matrix $k(p)$ is positive symmetric and satisfies $k(p) \geq {4\pi}
\Id$,  we deduce that  for all $(v,\omega) \in \R^3 \times \R^3$
	\begin{align} \label{eq:K.hat.coercive}
		\hat{\mathcal K}_{i,i} \begin{pmatrix} v\\ \omega \end{pmatrix} \cdot (v,\omega) \geq 2\pi
		\int_{\mathcal{C}_i} | v + \omega \wedge x|^2 \dd \mathcal{H}^1  (x).
		%> 0.
	\end{align}
Indeed, for $\omega \neq 0$, the integrand is non-constant since $\mathcal{C}_i$ is not a straight line. Thus, the integral is positive for $(v,\omega) \neq 0$.
\end{proof}
With these tools in hands we can now explicitly present the system which will prove to be the zero-thickness limit of the Newton-Stokes system  \eqref{N-S}. This system drives the 
 dynamics of 
the positions at time $t$ of the centerline curves
$\hat{{\mathcal C}}_i(t)$,  for  $ 1\leq i \leq N$, by the  rigid motions 
  \begin{align} \label{def:Ctourne}
  \hat{{\mathcal C}}_i(t) = \hat h_i(t) + \hat Q_i(t) \bar {\mathcal C}_i .
  \end{align}
  Here the vector ${\hat h_i(t)}\in\R^3$ and the matrix ${\hat Q_i(t)} \in SO(3)$ satisfy the following
  first-order ODEs: 
  \begin{gather} \label{debaze}
 {\hat h}_i'(t) = 
 \hat{\textrm{v}}_i (t) , 
 \quad  {{\hat Q}_i' (t) = ({\hat \omega}_i(t) \wedge \cdot){\hat Q}_i^{T} (t) } ,
    \\
  \label{fav}  	(\hat{\textrm{v}}_i(t) ,{\hat \omega}_i (t)) =  	\hat{\mathcal K}_{i,i}^{-1} (\hat h_i(t), \hat Q_i(t))	\, \hat{\mathfrak{f}}_i^\flat (t,\hat h_i(t), \hat Q_i(t)) . 
      \end{gather}
For  $ 1\leq i \leq N$, the right hand side of \eqref{fav} only depends on ${\hat h}_i(t)$ and ${\hat Q}_i (t)$, not on the positions of the other centerline curves corresponding to $j\neq i$.

On the other hand neither the matrices $\hat{\mathcal K}_{i,i}$, nor their inverses, usually referred to as mobility matrices, are  diagonal, not even by $3 \times 3$ blocks. This coupling between translation/rotation velocities and force/torque is typical of the case of rigid bodies with shape anisotropies. {It} is usually called the Jeffery effect, see \cite{Jeffery,Junk-Illner,VM}.

Finally, since the coefficients of \eqref{fav} are smooth and globally Lipschitz (this follows immediately from \eqref{eq:Sylvester.K}, \eqref{def:f.hat.flat.frac}, \eqref{lau} and the smoothness assumption for $u^\flat$), 
the Cauchy-Lipschitz theorem applies again and guarantees 
the following global-in-time well-posedness result.
\begin{Proposition} \label{CL-limit}
Given some initial  disjoint positions  of the centerline curves, given a smooth background flow $u^\flat$ satisfying  \eqref{regover}, 
there is a unique smooth global-in-time solution to  \eqref{def:Ctourne}-\eqref{debaze}-\eqref{fav} on $[0,+\infty)$. 
\end{Proposition}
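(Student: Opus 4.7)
The plan is to exploit two features of \eqref{def:Ctourne}--\eqref{fav}. First, the system decouples: as noted immediately after \eqref{fav}, the evolution of $(\hat h_i,\hat Q_i)$ depends only on itself and on the background flow $u^\flat$, never on the other filaments. Hence it suffices to establish global well-posedness for each of the $N$ autonomous ODEs on the manifold $\R^3 \times SO(3)$ separately. Second, the right-hand side of \eqref{fav} will turn out to be a smooth, uniformly bounded and globally Lipschitz vector field on $[0,\infty) \times \R^3 \times SO(3)$, from which one concludes via a global Cauchy-Lipschitz argument.

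To verify these regularity properties I would analyse the two factors on the right-hand side of \eqref{fav} separately. The Sylvester-type identity \eqref{eq:Sylvester.K} shows that $\hat{\mathcal K}_{i,i}(h_i,Q_i)$ is independent of $h_i$ and depends smoothly on $Q_i$ through conjugation; combined with Lemma \ref{hat-inverse} and the compactness of $SO(3)$, this yields a uniform strictly positive lower bound on its spectrum, so that $\hat{\mathcal K}_{i,i}^{-1}$ is smooth and bounded on $SO(3)$. For the Fax\'en piece $\hat{\mathfrak f}_i^\flat(t,h_i,Q_i)$, unwinding the definitions \eqref{def:f.hat.flat.frac}--\eqref{hatfab}--\eqref{lau} expresses it as an integral over the fixed reference curve $\bar{\mathcal C}_i$ of a smooth expression depending on $(t,\, h_i + Q_i\gamma_i(s),\, Q_i\gamma_i'(s))$ and built from $u^\flat$; the regularity assumption \eqref{regover} then transfers directly, giving uniform bounds on $\hat{\mathfrak f}_i^\flat$ and its gradients in $(h_i,Q_i)$ on $[0,\infty)\times\R^3\times SO(3)$.

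Combining these facts, the vector field defining \eqref{fav} is smooth and globally Lipschitz in $(h_i,Q_i)$, uniformly in $t \geq 0$. The Cauchy-Lipschitz theorem then produces a unique maximal smooth solution for each choice of initial datum. To rule out finite-time blow-up I would invoke two observations: the $\hat Q_i$-component is automatically confined to the compact set $SO(3)$, since the generator in \eqref{debaze} is skew-symmetric and hence tangent to $SO(3)$; and the translation velocity $\hat{\textrm v}_i$ is globally bounded by the preceding step, so $|\hat h_i(t)|$ grows at most linearly in time. The maximal solution therefore extends to all of $[0,\infty)$.

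The proof is essentially routine once the structural properties above are noted; the only point worth flagging, by contrast with Proposition \ref{CL}, is that the initial disjointness of the centerlines plays \emph{no} role here. Since the limit system is completely decoupled, possible collisions between limit curves do not obstruct the continuation: any residual interaction between filaments, if present, would have to be recovered only at a lower order of the asymptotic expansion in $\eps$.
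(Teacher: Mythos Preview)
Your proposal is correct and follows essentially the same route as the paper: the paper's argument is a one-liner invoking \eqref{eq:Sylvester.K}, \eqref{def:f.hat.flat.frac}, \eqref{lau} and the smoothness of $u^\flat$ to conclude that the coefficients of \eqref{fav} are smooth and globally Lipschitz, then applies Cauchy--Lipschitz. Your write-up is more explicit---in particular the observations that $SO(3)$ is compact, that the skew-symmetric structure of \eqref{debaze} keeps $\hat Q_i$ on $SO(3)$, and that the disjointness hypothesis is superfluous for the decoupled limit system---but the underlying argument is the same.
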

Although each of these decoupled ODEs admits a unique smooth global-in-time solution, it could be that some of the positions of the  centerline curves which they define collide in finite time.
\begin{Definition} \label{collision-limit}
Let us denote by  
 $\hat T $ in $(0,+\infty]$ the time of the first  collision in the limit dynamics, that is the first time for which at least two of the 
 centerline curves $\hat{\mathcal C_i}(t)$ defined by \eqref{def:Ctourne}-\eqref{debaze}-\eqref{fav} have a non-empty intersection, with the convention that $\hat T = +\infty$ if there is no such collision. More precisely, we define
 \begin{align}
    \hat d_{\min}(t) &:= \min_{i \neq j} \dist(\hat {\mathcal C}_i(t),  \hat {\mathcal C}_j(t)), \label{def:d.min.hat} \\
    \hat T &:= \inf\{ t \geq 0 : \hat d_{\min}(t) = 0 \} \label{def:hat.T}.
 \end{align}
\end{Definition}

\subsection{Asymptotic fluid behaviour}
\label{sec-afb}

Regarding the fluid behaviour  when the thickness parameter $\eps$ converges to $0$, it is only a matter to understand the behaviour of 
 the perturbation flow  $(u^\mathfrak{p} , p^\mathfrak{p})$ due to the filaments, since on the other hand the background flow $(u^\flat ,p^\flat ) $ is fixed. 
Precisely, the steady Stokes system in presence of several thin filaments has been the object of several studies usually referred to as the 
slender body theory or as the immersed boundary method. 
 It can also be viewed as a Stokesian counterpart of the issue of Newtonian capacity, see \cite{FNN}. 
 
 To capture the leading term of  the perturbation flow  $(u^\mathfrak{p} , p^\mathfrak{p})$  as $\eps$ converges to $0$, the key idea is to consider 
  the Stokes system in the full space $\R^3$ with an appropriate source term given as Dirac masses along the centerline curves $\mathcal C_i$. 
The intensity of these Dirac masses is related to the bilinear operator  $I_{\mathcal C} [\cdot,\cdot]$ defined in \eqref{lau} in the following way. 
Let $v$  a divergence-free vector field in  $W^{1,\infty}(  \cup_{1 \leq i \leq N}  \mathcal S_i )$. 
We define 
 $ \mu_{\mathcal C_i} [v]$ as the vector measure, supported on ${\mathcal{C}}_i$,  defined by
\begin{align} \label{def:mu(v)}
<	 \mu_{\mathcal C_i} [v], \phi >	 &:=   I_{{\mathcal{C}}_i} [v,\phi]  , \quad \text{ for any }  \phi \in C_c (\R^3 ; \R^3) .
\end{align}
Moreover we define the vector field
\begin{align} \label{def:U}
	U_{\mathcal C_i}[v]:=  S \ast  \mu_{\mathcal C_i} [v], 
\end{align}
where the symbol $\ast$ stands for the convolution in $\R^3$, $S$ is the Stokes kernel defined by \eqref{e.defS}.
This reads 
	\begin{align} \label{eq:U.formal}
U_{\mathcal C_i}[v](x)	= \frac12 \int_{\mathcal{C}_i}  S(x -y) k(\tau(y))  v( y) \, \dd \mathcal{H}^1  (y) ,
	\end{align}
for any $x\in\R^3\setminus\mathcal C_i$, 
 where we recall that $\tau$ is the unit tangent vector defined below \eqref{lau}.

Let us recall that the counterpart of the Stokes kernel $S$ for the 
pressure is the vector $P(x) $, defined for $x$ in $\R^3 \setminus \{ 0 \}$, by 
\begin{align} \label{noyauP}
	P(x) = \frac x {4 \pi |x|^3}  .
\end{align}
Notice that  
\begin{align}
\label{fonda}
	-\Delta S + \nabla P = \delta_{0} \Id \quad \text{ and } \quad  \dv S = 0 ,
\end{align}
in the sense of distributions, where the differential operators are applied column-wise.
Then we associate with the operator $U_{\mathcal C_i}$ the following counterpart for the pressure
\begin{align} \label{def:P}
	P_{\mathcal C_i}[v](x) := P \ast  \mu_{\mathcal C_i} [v],  
\end{align}
which reads {for any $x\in\R^3\setminus\mathcal C_i$},
\begin{align*} 
	P_{\mathcal C_i}[v](x) = {\frac 1 2}\int_{\mathcal{C}_i}  P(x -y) \cdot k(\tau(y))  v(y) \, \dd \mathcal{H}^1  (y)	.
\end{align*}
Thus it follows from 
\eqref{fonda} that, in the sense of distributions {in} 
the variable $x$, 
\begin{align}
\label{singm}
	-\Delta U_{\mathcal C_i}[v] + \nabla P_{\mathcal C_i}[v] =  \mu_{\mathcal C_i} [v]\quad \text{ and } \quad   \div  U_{\mathcal C_i}[v]   = 0 .
\end{align}
Our main result below, see Theorem \ref{main}, establishes that the vector field 
\begin{align}
\label{hatup}
{\hat u}^\mathfrak{p}(t,\cdot) := \sum_{1 \leq i \leq N} U_{\hat{\mathcal C}_i}[   {\hat v}^{{\mathcal S}_i}(t,\cdot)  - u^\flat (t,\cdot)  ],
\end{align}
where 
\begin{align}
\label{hatS}
{\hat v}^{{\mathcal S}_i}  (t,x) :=  {\hat h}_i' (t)  +  {\hat Q}_i' (t)     {\hat Q}_i (t)^T (x- {\hat h}_i(t)) ,
\end{align}
is the leading part of the perturbation flow ${u}^\mathfrak{p}$ 	 up to a renormalization factor $|\log \eps|^{-1}$. 
%
%%%%%%%%%%%%%%%%%%%%%%

\subsection{Convergence result}
\label{sec-cv}

The main result of this paper is the following theorem{, which contains two points: (i) an estimate of} the time of the first collision and {(ii) the convergence of} the dynamics of the filaments and {of a renormalized fluid} perturbation velocity as the thickness parameter $\eps$ of the filaments converges to zero. The following statement aims at {providing} a {simple} description of our results while some complementary more technical elements will be discussed below. 

\begin{thm}
\label{main}
We consider some initial  disjoint positions of the  centerline curves, a smooth background flow
$u^\flat$ satisfying  \eqref{regover}, and the solutions $({\hat h}_i , {\hat Q}_i)_{1 \leq i \leq N}$ given by Proposition \ref{CL-limit}. Let $\hat T $ in $(0,+\infty]$ the time of the first  collision associated with this solution as defined in Definition \ref{collision-limit}.
 For each $\eps$ in $(0,1)$, 
 the  initial  positions of the filaments of thickness parameter $\eps$ are deduced from the ones for the  centerline curves by \eqref{def:S^eps} and  
\eqref{def:Stourne}. Let $\kappa$ in $(0,1)$ such that for any 	$\eps$ in $(0,\kappa)$ the initial positions of the filaments are disjoints. 
Let us consider some initial rigid velocities, all independent of $\eps$ in $(0,\kappa)$, for the $N$ filaments. 
 We denote, for $\eps$ in $(0,\kappa)$, the corresponding solutions
 $(h_{i,\eps} , Q_{i,\eps} )_{1 \leq i \leq N}$ to the  Newton-Stokes system  \eqref{N-S} up to the time $T^{\max}_\eps$ as given by Proposition \ref{CL}. 

Then on the one hand
\begin{equation}
\label{temps}
 \liminf_{\eps \to 0} T^{\max}_\eps \geq \hat T, 
\end{equation}
and on the other hand, for any $1 \leq i \leq N$, 
 for all $T < \hat T$ there exists $C$  depending only on $u^\flat$,
 the filaments 
 ${\bar{ \mathcal S}_i^\kappa}$ with thickness $\kappa$, 
 $\inf_{t \in [0,T]} \hat d_{\min}(t)$ (see Definition \ref{collision-limit}) and the initial velocities, and there exists $\eps_0 > 0$ depending in addition on $T$ 
 such that for all $\eps $ in $(0, \eps_0)$,
 \begin{align} \label{est:main.positions}
 \|(h_{\eps, i} , Q_{\eps, i} )    - ({\hat h}_i , {\hat Q}_i)\|_{L^\infty(0,T)}
 \leq C \left(\eps +   |\log \eps|^{-1/2} T\right) e^{C T}.
 \end{align}
The perturbation flow $u^\mathfrak{p}$ due to the filaments, extended by the filament velocity inside each filament, 
satisfies the following estimates: for any compact subset $K$ of $\R^3$ and for any $p $ in $[1, 2)$, for all $t < T$, for all $\eps $ in $(0, \eps_0)$,
\begin{equation}
\label{conv-flui-precised}
	\|    u^\mathfrak{p} (t,\cdot)  - |\log \eps|^{-1} \, {\hat u}^\mathfrak{p} (t,\cdot)  \|_{L^p ( K) }  
	\leq 
	 C |\log \eps|^{-1} \left( 
	 \, e^{-  \frac{C t}{\eps^2 |\log \eps|}} +  |\log \eps|^{-\frac12} e^{C t} \right).
\end{equation}
where ${\hat u}^\mathfrak{p} $ is given by \eqref{hatup} and \eqref{hatS}.
	\end{thm}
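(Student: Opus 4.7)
The plan is to rewrite \eqref{N-S} as a closed first-order singularly perturbed ODE for $X_\eps := ((\textrm v_i,\omega_i))_{1\leq i\leq N}$ by solving the Stokes subproblem \eqref{ssys1}--\eqref{bc-u} and expressing the tractions in \eqref{eq:v'}--\eqref{eq:omega'} as $-\mathcal K^\eps(h_\eps,Q_\eps)X_\eps + \mathfrak f^\flat_\eps(t,h_\eps,Q_\eps)$ through a finite-$\eps$ resistance operator. Theorem \ref{lem:slender.several} and Corollary \ref{prop:asymptotic.resistance} then give the slender body asymptotics $\mathcal K^\eps = |\log\eps|^{-1}\hat{\mathcal K}(h_\eps,Q_\eps) + O(|\log\eps|^{-3/2})$, and analogously for $\mathfrak f^\flat_\eps$, reducing \eqref{N-S} to $\eps^2 M_\eps\dot X_\eps = -|\log\eps|^{-1}(\hat{\mathcal K}X_\eps - \hat{\mathfrak f}^\flat) + O(|\log\eps|^{-3/2})$, a stiff system whose formal limit is exactly \eqref{fav}. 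Throughout, everything is run on $[0,T]$ with $T<\hat T$ under the \emph{a priori} assumption that the $\eps$-filaments have not yet collided and stay in a fixed compact neighborhood of the limit configuration; a continuation/bootstrap based on the quantitative estimates below then promotes this to an honest statement and in particular delivers \eqref{temps}.

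\textbf{Modulated energy.} The main quantitative step controls the velocity deviation $E_\eps(t) := X_\eps(t) - \hat X(h_\eps(t),Q_\eps(t))$ between the actual $\eps$-velocities and the limit velocities \emph{evaluated at the current $\eps$-positions}. I would introduce the modulated kinetic energy
\[
\mathcal E(t) := \tfrac{\eps^2}{2}\sum_i m_i\,|\textrm v_i - \hat{\textrm v}_i|^2 + \tfrac{\eps^2}{2}\sum_i \mathcal J_i(\omega_i - \hat\omega_i)\cdot(\omega_i - \hat\omega_i),
\]
differentiate it in time, and use the identity $\hat{\mathcal K}\hat X = \hat{\mathfrak f}^\flat$ to make the linear-in-$E_\eps$ contribution collapse to the perfect-square dissipation $-|\log\eps|^{-1}\hat{\mathcal K}E_\eps\cdot E_\eps$. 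With the coercivity from Lemma \ref{hat-inverse}, Young's inequality applied to the slender body residuals and to transport terms of the form $\nabla_{(h,Q)}\hat X\cdot\dot{(h_\eps,Q_\eps)}$, and the bound $|M_\eps\dot{\hat X}|\leq C\eps^2$, this yields
\[
\mathcal E'(t) + \frac{c}{\eps^2|\log\eps|}\,\mathcal E(t) \leq C|\log\eps|^{-2} + C\eps^2,
\]
hence by Gronwall
\[
|E_\eps(t)|^2 \leq C\,e^{-ct/(\eps^2|\log\eps|)} + C|\log\eps|^{-1},\qquad \int_0^T|E_\eps|^2\,\dd t \leq C\eps^2|\log\eps| + CT|\log\eps|^{-1}.
\]
This simultaneously reveals the $O(\eps^2|\log\eps|)$ initial boundary layer and the $O(|\log\eps|^{-1/2})$ amplitude of $E_\eps$ in time-averaged sense.

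\textbf{Position and fluid convergence.} For the positions, I would write $h_\eps(t)-\hat h(t) = (h_\eps(0)-\hat h(0)) + \int_0^t E_\eps(s)\,\dd s + \int_0^t\bigl(\hat{\textrm v}(h_\eps,Q_\eps) - \hat{\textrm v}(\hat h,\hat Q)\bigr)\,\dd s$: the initial term is $O(\eps)$ by \eqref{eq:center.mass.small}--\eqref{eq:initial.positions}, the first integral is controlled via Cauchy--Schwarz by $t^{1/2}\bigl(\int_0^t|E_\eps|^2\bigr)^{1/2} \leq C t|\log\eps|^{-1/2}$, and the Lipschitz term feeds a Gronwall loop producing the $e^{CT}$ factor of \eqref{est:main.positions}; the rotation component $Q_\eps - \hat Q$ is handled analogously using $\hat\omega$ in place of $\hat{\textrm v}$. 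For the fluid, Theorem \ref{lem:slender.several} approximates $u^{\mathfrak p}$ by $|\log\eps|^{-1}\sum_i U_{\mathcal C_{i,\eps}}[v^{\mathcal S_i}-u^\flat]$ in $L^p(K)$ for any $p<2$ (the restriction $p<2$ accommodates the $|x-y|^{-1}$ singularity of the Stokes kernel $S$). Writing
\[
u^{\mathfrak p} - |\log\eps|^{-1}\hat u^{\mathfrak p} = \Bigl(u^{\mathfrak p} - |\log\eps|^{-1}\sum_i U_{\mathcal C_{i,\eps}}[v^{\mathcal S_i}-u^\flat]\Bigr) + |\log\eps|^{-1}\sum_i\bigl(U_{\mathcal C_{i,\eps}}[v^{\mathcal S_i}-u^\flat] - U_{\hat{\mathcal C}_i}[\hat v^{\mathcal S_i}-u^\flat]\bigr),
\]
the first bracket contributes the $|\log\eps|^{-1}\cdot|\log\eps|^{-1/2}e^{Ct}$ term via the slender body residual, and the second bracket, estimated by Lipschitz continuity of $U_{\mathcal C}[v]$ in $(\mathcal C,v)$, inherits both the $|\log\eps|^{-1}e^{-Ct/(\eps^2|\log\eps|)}$ transient from the pointwise bound on $|E_\eps|$ and the $|\log\eps|^{-3/2}e^{Ct}$ remainder from the position/velocity estimates. \emph{The main obstacle} is the modulated energy step itself: the modulator $\hat X(h_\eps,Q_\eps)$ must be evaluated along the $\eps$-trajectory (not along $(\hat h,\hat Q)$) for the linear-in-$E_\eps$ terms to cancel exactly, but this produces transport contributions linear in $X_\eps = \hat X + E_\eps$, and absorbing them --- together with the full off-diagonal structure of the slender body residual describing inter-filament interactions --- into the $|\log\eps|^{-1}$ dissipation without losing the rate requires the sharp $|\log\eps|^{-3/2}$ error in Corollary \ref{prop:asymptotic.resistance} rather than a merely $|\log\eps|^{-1}$ one.
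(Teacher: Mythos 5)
Your proposal is correct in substance and reaches all three conclusions of Theorem \ref{main} by the same overall architecture as the paper (reformulation of the Newton equations through the finite-$\eps$ resistance tensor, a modulated-energy/Gronwall argument with a bootstrap on $d_{\min}$ delivering \eqref{temps}, and the two-bracket decomposition of $u^{\mathfrak p}-|\log\eps|^{-1}\hat u^{\mathfrak p}$ handled by Theorem \ref{lem:slender.several} plus Lipschitz continuity of $U_{\mathcal C}[v]$ in the configuration), but it differs at the key step in a way worth recording. The paper modulates by the exact $\eps$-dependent Fax\'en velocity $\mathrm{Y}^\flat=\mathcal K^{-1}\mathfrak f^\flat$ (see \eqref{beth} and Proposition \ref{p}), so the damping term $-\mathcal K(\mathrm{Y}-\mathrm{Y}^\flat)$ cancels exactly; the price is that one must bound $(\mathrm{Y}^\flat)'$, which is what forces the whole shape-derivative machinery of Section \ref{shape-derivative} (Proposition \ref{lem:shapeDerivatives} feeding Lemma \ref{lemma-drift}), and the reward is the refined quasi-static estimate of Theorem \ref{refinedqs} with error $O(\eps^2|\log\eps|)$ after the transient; the comparison with the limit dynamics is then done in two further steps, $\mathrm{Y}^\flat$ versus $\tilde{\mathrm{Y}}$ (Lemma \ref{itscor}) and $\tilde{\mathrm{Y}}$ versus $\hat{\mathrm{Y}}$. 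You instead modulate directly by what the paper calls $\tilde{\mathrm{Y}}$ in \eqref{def.tildeY}, i.e.\ the limit mobility $\hat{\mathcal K}^{-1}\hat{\mathfrak f}^\flat$ evaluated at the $\eps$-positions: the cancellation is then only approximate, leaving a residual $\mathfrak f^\flat-\mathcal K\tilde{\mathrm{Y}}=O(|\log\eps|^{-3/2})$ controlled by \eqref{eq:approx.K} and \eqref{ofint} (you correctly identify that the sharp $|\log\eps|^{-3/2}$ rate of Corollary \ref{prop:asymptotic.resistance} is what makes this absorbable into the $|\log\eps|^{-1}$ dissipation), while the transport term $(\tilde{\mathrm{Y}})'$ only requires Lipschitz bounds on the explicit limit coefficients \eqref{hatK}, \eqref{hatfab} together with \eqref{eq:Sylvester.K} and Lemma \ref{hat-inverse}, so your route bypasses the shape-derivative estimates of Section \ref{shape-derivative} for the purpose of Theorem \ref{main}. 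What you give up is precision in the velocity relaxation: your $|E_\eps|\lesssim e^{-ct/(\eps^2|\log\eps|)}+|\log\eps|^{-1/2}$ is exactly what \eqref{est:main.positions}, \eqref{allezesti} and \eqref{conv-flui-precised} need, but it does not recover the paper's $O(\eps^2|\log\eps|)$ closeness to the Fax\'en velocities in Theorem \ref{refinedqs}. Two minor bookkeeping points: the bootstrap must also cap $|\mathrm{Y}|$ (as in the definition \eqref{def.Tepsd} of $T_{\eps,d}$) so that the quadratic terms $\sqrt E\,|\mathrm{Y}|$ and $E|\mathcal J'|$ close; and in the position step the Cauchy--Schwarz bound produces an extra harmless term of order $t^{1/2}\eps|\log\eps|^{1/2}$ (direct integration of the pointwise bound, as in Proposition \ref{pbis}, is cleaner), while your parenthetical attributing the restriction $p<2$ to the $|x-y|^{-1}$ singularity of $S$ is imprecise --- the restriction comes from the $W^{1,q}$, $q<3/2$, duality estimate and the $\dot H^1$ bound excluding the filaments --- but neither affects the validity of the argument.
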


\ \par \ 
A few comments on Theorem \ref{main} are in order. 

\ \par \ 
Let us start with saying that Theorem \ref{main} establishes the convergence of the   original system \eqref{N-S} for the filaments to the reduced model \eqref{fav} for their centerline curves, as the thickness parameter $\eps$ converges to zero. Let us highlight that  the system \eqref{N-S} is a coupled system made of the Newton equations associated with all the filaments whereas the limit equation \eqref{fav} for each filament is decoupled from the others. Such a phenomenon   enters the scope 
of the theme of \textit{hydrodynamic decoupling}. Here it states that the main effect on each limit centerline curve is due to the background flow and not from the other filaments.  The limit equations  \eqref{fav} have the advantage in view of  applications to  only involve the geometry of the centerline curves rather than the one of the whole filaments.

\ \par \ 
The system \eqref{N-S} is a second-order  system whereas the limit equations \eqref{fav} are {first-order equations}. Therefore one initial data has to be dropped for the limit system \eqref{fav}. Unless 
the initial data for the system \eqref{N-S} satisfies the compatibility conditions  $( \textrm{v}_i (0) ,\omega_i (0))  =  (\hat{\textrm{v}}_i(0) ,{\hat \omega}_i (0))$, for all $1 \leq i \leq N$, with $(\hat{\textrm{v}}_i(t) ,{\hat \omega}_i (t))$ given by \eqref{fav}, 
the velocities dynamics exhibit {an} initial layer, which prevents  
uniform convergence of the filament velocities down to the initial time.   
Indeed a byproduct of our analysis is that 
we are able to describe the nature of the initial stage: it is an exponential relaxation within a time interval of order 
  $O(\eps^2 \log \eps)${. During that} time, 
 a transition of the amplitudes of the filament velocities occurs which is of order $O(1)$. After this initial stage, the dynamics of the filaments is adapted to the first-order dynamics of the limit system and the convergence occurs at least with a rate $O( \vert \log \eps \vert^{-\frac12})$. 
   More precisely, {for any $T < \hat T$ there is $\eps_0 > 0$ and $C>0$ as in Theorem \ref{main}} such that for any $\eps $ in $(0,\eps_0)$, for any $1 \leq i \leq N$, and for any $t$ in $[0,T]$,
\begin{align}
 \left| ({\textrm{v}}_i ,\omega_{i})(t)  -  ({\hat {\textrm{v}}}_i ,  \hat \omega_i  ) (t) \right| 	& \leq   \left|  (\textrm{v}_i  ,\omega_i ) (0)- 
  (\hat{\textrm{v}}_i,{\hat \omega}_i ) (0)  \right|  \, e^{-  \frac{C t}{\eps^2 |\log \eps|}} + C  |\log \eps|^{-\frac12} e^{Ct}.  \label{allezesti}
\end{align}

\ \par \ 
Our analysis allows us to give an  even better approximation of the solutions to \eqref{N-S} when $\eps$ goes to $0$,
by a family of velocities, indexed by $\eps$, given by a quasi-static balance similar to \eqref{fav}, but with the Stokes resistance matrices and  the   Fax\'en force and torque 
associated with the whole set of filaments rather than their sole centerlines, see  Theorem \ref{refinedqs}.

\ \par \ 
Regarding the  fluid part of the system,  Theorem \ref{main} establishes that 
 after an initial relaxation stage the perturbation of the fluid velocity  ${u}^\mathfrak{p}$ is well-approximated in $L^p_\loc$ by $|\log \eps|^{-1} \hat {u}^\mathfrak{p}$ which is explicitly given (see \eqref{hatup}) in terms of the limit dynamics of the filament centerlines only. The estimate 
 \eqref{conv-flui-precised} should be interpreted in the sense that, firstly, the fluid perturbation is of order $|\log \eps|^{-1}$ in $L^p_\loc$, $p<2$ which corresponds to the Stokes resistance of the filaments. Secondly, the  perturbation, rescaled to order $1$, is well approximated by $ \hat {u}^\mathfrak{p}$ up to an error which corresponds to the sum of the errors of the positions \eqref{est:main.positions} and of the velocities \eqref{allezesti} of the filament centerlines. 
 As we will see, it is possible to improve the estimate to $L^p_\loc$, $p< 6$ on the expense of the rate of convergence. More precisely, for $2 \leq p < 6$ and for all $\delta > 0$,
 \begin{align}
\label{conv-flui-precised.comment}
	&\|    u^\mathfrak{p}(t,\cdot)  - |\log \eps|^{-1} \, {\hat u}^\mathfrak{p} (t,\cdot)  \|_{L^p ( K) }   \\ \nonumber
	&\leq 
	 C |\log \eps|^{-1} \big(     \sum_{1 \leq i \leq N} \left| (\textrm{v}_i ,\omega_i ) (0,\cdot) -  (\hat{\textrm{v}}_i ,{\hat \omega}_i )(0,\cdot) 
	 \right|  \, e^{-  \frac{C t}{\eps^2 |\log \eps|}} 
	+  |\log \eps|^{-\frac 1 2( \frac 3 p - \frac 1 2 - \delta)} e^{C t} \big).
\end{align}
\ \par \ 
 We observe that, at any time, the leading part
${\hat u}^\mathfrak{p}$  of the perturbation flow given by 
 \eqref{hatup} satisfies the modified Stokes equation in the sense of distributions in
the variable $x$, in $\R^3$,
\begin{align*}
	-\Delta {\hat u}^\mathfrak{p}  + \nabla {\hat p}^\mathfrak{p} = \sum_{1 \leq i \leq N}  \hat \mu^\mathfrak{p}_i  \quad \text{ and } \quad   \div {\hat u}^\mathfrak{p}  = 0 ,
\end{align*}
where for $1 \leq i \leq N$,  the term $\mu^\mathfrak{p}_i $ is the vector measure given by 
$$
 \hat \mu^\mathfrak{p}_i  := 
\mu_{\hat{\mathcal C}_i}[   {\hat v}^{{\mathcal S}_i}  - u^\flat   ],
$$
where $ {\hat v}^{{\mathcal S}_i} $ is given by \eqref{hatS}
and
${\hat p}^\mathfrak{p} := P   \ast  \hat \mu^\mathfrak{p}_i  ,$
with $ P $ given by \eqref{noyauP}. 
Moreover,  for any $1 \leq i \leq N$, it follows from the definition of 
the vector fields $v_{i,\alpha}$, for $\alpha =1,2,3$, in \eqref{t1.6},  that the total mass of the measure $\mu^\mathfrak{p}_i $ is
$$
 \int_{\hat{\mathcal C}_i} d \hat \mu^\mathfrak{p}_i   
 = 
 (\hat{\mathcal K}_{i,\alpha,i,\beta} (\hat h_i , \hat Q_i))_{1 \leq  \alpha \leq 3 ; 1 \leq  \beta \leq 6} \, 
  	(\hat{\textrm{v}}_i ,{\hat \omega}_i ) - (\hat{F}^\flat_{i,\alpha})_{1 \leq  \alpha\leq 3} ,
 $$
where we also recall the definitions \eqref{hatK} and \eqref{hatfab}. 
The right hand side above is precisely the leading part of the force due to the fluid on the $i$-th filament, up to the 
renormalization factor $|\log \eps|^{-1}$ and  to the sign, so that its vanishing is precisely the part of \eqref{fav} which concerns the force.  
This is reminiscent of Newton's third law of motion (a.k.a. the action-reaction principle). 

We emphasize that the perturbation flow ${u}^\mathfrak{p}$ is not well approximated by $|\log \eps|^{-1} \, {\hat u}^\mathfrak{p} $ in 
 $\dot H^1$. On the one hand, the perturbation in $\dot H^1$ is actually of order $|\log \eps|^{-1/2}$ instead of $|\log \eps|^{-1}$ (since the Stokes resistance $|\log \eps|^{-1}$ corresponds to the square of the $\dot H^1$-norm).
 On the other hand, the $\dot H^1$-norm turns out to be concentrated in a region of order $\eps$ around the filaments. Since the errors of the positions compared to the limit system is much larger (of order $|\log \eps|^{-1/2}$),  $|\log \eps|^{-1} \, {\hat u}^\mathfrak{p} $ is not a good approximation in $\dot H^1$.
 However, we will show, see Proposition \ref{pro:u^p.U},  that  ${u}^\mathfrak{p}$ is well approximated in $\dot H^1$ by 
 \begin{align}
 |\log \eps|^{-1} \,    \sum_{1 \leq i \leq N} U_{\mathcal C_i} [v^{\mathcal S_i}(t,\cdot) - u^\flat(t,\cdot) ].
 \end{align}
This estimate is actually an important ingredient in the proof of our main result.

\ \par \ 
\subsection{Strategy of the proof of Theorem \ref{main}}  \label{sec:strategy}
Let us give here a glimpse of some elements of the proof of Theorem \ref{main}, whose detailed proof is the purpose of the rest of the paper.
Let us focus first on the way we  deal with the $\eps$-dependence in the filaments dynamics, letting aside for a while the role played by the Stokes system.  
\begin{itemize}
\item A first ingredient is a reformulation of the Newton equations into a second-order ODE for the $6N$ degrees of freedom of the filaments, see \eqref{compressed}. This singularly perturbed ODE looks like the following toy-model:
\begin{equation}
\label{ecosse}
\eps^2 \, q'' = -  |\log \eps|^{-1}  (k^\eps(q) q' - f^\eps(q)) + r^\eps,
\end{equation}
where the scalar
unknown $q$ stands for the variables encoding the positions of the filaments (with a mute dependence on $\eps$), $k^\eps $ are positive $q$-Lipschitz functions uniformly with respect to $q$ and $\eps$, $f^\eps$ are  $q$-Lipschitz functions uniformly with respect to $q$ and $\eps$, and $r_\eps$ is a {remainder}   
with nice estimates. 
\item A second ingredient is a modulated energy argument which consists in estimating the dynamics of 
\begin{equation*}
\frac12 (q' - V^\eps(q))^2 \quad \text{ with } \quad V^\eps(q) := (k^\eps(q))^{-1} \, f^\eps(q).
\end{equation*}
This leads to 
\begin{equation}
\label{irlande}
q' = V^\eps(q)  + \tilde r^\eps,
\end{equation}
with $\tilde r^\eps$ satisfying some relevant estimates. 
\item A third ingredient is to prove that, roughly speaking,
\begin{equation}
\label{galles}
\forall \overline q, \quad V^\eps(\overline q) \rightarrow \hat V(\overline q) \quad \text{as } \quad \eps \rightarrow0 . 
\end{equation}
\item This finally allows to compare $q$ and the solution $\hat q$ of the limit ODE: 
$$\hat q' = \hat V(\hat q).$$
\end{itemize}
Of course this protocol relies on a detailed analysis of the asymptotic behaviour {on the fluid part}, to obtain the behaviour with respect to $\eps$  of the coefficients  in \eqref{ecosse} %, 
and to prove \eqref{galles}. 
This analysis uses  properties of the Stokes system in the presence of several   filaments in the zero-thickness limit,  for which time only plays the role of a parameter through the  positions of the filaments. 
We will therefore devote a separate section to this issue first, {see} Section \ref{sec:singularity.method}. 
This analysis will also allow to obtain the part of Theorem \ref{main} which concerns the  asymptotic behaviour {of the fluid}.

 \begin{remark} \label{rk-modu}
  The idea of using a modulated energy to deal with singular ODEs is rather ubiquitous in nature; let us mention the paper \cite{BG} for a spectacular use in the context of the analysis of 
 the motion of a charged particle in a slowly varying electromagnetic field when the particle mass converges to zero. An important difference with the case of the equation  \eqref{compressed} is that 
  in \cite{BG} the term without derivative is  a gyroscopic term,  rather than a damping term, so that the modulation  provides a center-guide along which the exact solution oscillates. 
  \end{remark}

\ \par \ 
\subsection{Organization of the proof of Theorem \ref{main}} 
\label{sec:structure}

 In Section \ref{sec:singularity.method} we analyze the asymptotic behaviour of the solution of the  steady Stokes system in presence of several thin  filaments with Dirichlet data at the interface between the fluid and the filaments.  A well-known approximation consists in replacing the presence of the slender filaments by appropriate source terms which are measures supported on the  filament centerlines in the  steady Stokes system set in the whole space $\R^3$.  The precise definition of this approximation is given in Section \ref{sec:singularity.method}  together with an error estimate of the difference between this approximation and the exact solution in the natural energy space, see Theorem  \ref{lem:slender.several}. This analysis holds for any given configuration of the filaments as long as there is no intersection of two or more filaments. 

 In Section \ref{shape-derivative},  we bound the shape derivatives of the Dirichlet energy of solutions of the  steady Stokes system in presence of several thin  filaments.  

Section \ref{gathered-section} is devoted to the proof of the part of Theorem \ref{main}  which concerns the asymptotic behaviour of the filament centerlines.

On the other hand the  part of Theorem \ref{main} which concerns the fluid 
asymptotic behaviour is proven in Section \ref{sectiondassaut}.

\subsection{Comparison with  the literature} 
\label{compa-bib}

It is well known,  see for example the classical textbooks\cite{HW,Pozrikidis}, 
that the solution to the steady Stokes system in the exterior of  bodies  can be written in terms of boundary integral operators over the surfaces of the bodies.
The purpose of the slender body theory is to approximate this solution in the case where the bodies are 
  thin filaments by replacing the integral operators over the surfaces by  integral operators over the filament centerlines. 
  This idea dates back to  Hancock \cite{Hancock}, Cox \cite{Cox}, Batchelor  \cite{Batchelor}, Keller and Rubinow  \cite{KR}, Johnson \cite{Johnson} and had a regain of interest with the {numerical} work  by Peskin  \cite{Peskin}; see also the more recent papers  \cite{Ohm-numerics, Tornberg}.
  More precisely in the slender body theory, in the case where one considers the steady Stokes equations in the exterior of a single $\eps$-thick filament 
$ {\bar{ \mathcal S}_i^\eps}$, as defined in 
 \eqref{def:S^eps}, with some boundary data $v$ on $ \partial {\bar{ \mathcal S}_i^\eps}$,  one substitutes to the exact solution $u$ of this exterior problem, the  solution $u_f$ to the steady Stokes equations in the full space $\R^3$ with as source term  Dirac masses along the centerline curve $\bar{\mathcal{C}}_i$ of $ {\bar{ \mathcal S}_i^\eps}$, that is a  measure $ \mu_{f}$ defined by 
\begin{align} \label{def:muf}
<	 \mu_{f} , \phi >	 &:=   \int_{ \bar{\mathcal{C}}_i} f  \cdot \phi   \, \dd \mathcal{H}^1     , \quad \text{ for any }  \phi \in C_c (\R^3 ; \R^3) ,
\end{align}
where the (vector) density  $f$ has to be chosen in a relevant way. Indeed $u_f$  is given by $u_f := S \ast  \mu_{f} $, where the symbol $\ast$ stands for the convolution in $\R^3$ and $S$ is the Stokes kernel defined by \eqref{e.defS}, which satisfies the steady Stokes equations, with zero source,  in the exterior of the  centerline curve $\bar{\mathcal{C}}_i$ and   \textit{a fortiori } in the exterior of 
the $\eps$-thick filament  $ {\bar{ \mathcal S}_i^\eps}$.  Therefore, when comparing $u$ and $u_f$, the key point is that the trace of $u_{f} $ on $ \partial {\bar{ \mathcal S}_i^\eps}$, which is a linear integral operator acting on the density $f$, matches with $v$. 
However it has been shown  in \cite{Goetz}  that this operator is actually not invertible. 
On the other hand, as already observed in \cite{Cox}, the leading order part of the integral operator is completely local, and gives rise to the correspondence $ f(y) = \frac12 k(\tau(y)) v(y)$ as in \eqref{eq:U.formal}. 
To our knowledge, we provide here for the first time rigorous quantitative error estimates for the zero order slender body approximation given by this correspondence. Neglecting higher-order terms has the advantage of an explicit approximation but restricts to errors of order $|\log \eps|$. However, it seems that in the case of non-circular cross-section errors of this order are unavoidable anyway, if one only relies on approximations through force densities on the centerline. On the other hand, in the case of a  filament with circular cross sections, one  may consider refined approximations by adding to Dirac masses along the centerline curve some other higher-order singularities, in particular the so-called doublets which correspond to $\Delta S$. In this case,  invertible regularizations of the integral operator mentioned above 
have been studied in \cite{Mori-Ohm,Mori-Ohm-Accuracy}.
Let us also mention the recent papers \cite{Mori-Ohm-Spirn,Mori-Ohm-free-ends} 
 which  provide rigorous justifications of the slender body theory 
in the case where the density of force on the centerline curve of a single filament with circular cross sections
is prescribed. 

In \cite{Gonzalez}, Gonzalez has tackled the zero-radius limit of the quasi-static motion of  a single massless filament. His result establishes a limit balance similar to our result, however only under an extra  assumption on the asymptotic behaviour of the density of forces acting on the filament.
His conditional result relies on an different approach than ours, that is on the  boundary integral formulation of the Stokes equations. 

\smallskip

The aforementioned papers are mostly concerned with the quasi-static Stokes problem in the exterior of a given filament  and not with the time  evolution of the filament. 
On the other hand, in \cite{Junk-Illner} a rigid body of arbitrary shape is considered, moving in a viscous incompressible flow driven by the unsteady incompressible  Navier-Stokes  equations. The authors provide a formal derivation of the motion  in the limit where the  size of the body converges to $0$ and the mass density is fixed. This asymptotic analysis relies on the assumption that  the fluid is undisturbed by the particle at the main order and that  the rotation of the rigid body is $O(1)$, while it results from the analysis that at the leading order the particle behaves as a passive tracer in the fluid. 
In \cite{VM} the authors have extended the analysis to other inertia regimes.

\smallskip

Readers familiar with  the vortex filament conjecture for Euler Flows may be tempted to draw a comparison with the present work.
This conjecture concerns the 3D incompressible Euler equations in the case where the initial vorticity is concentrated along a smooth curve. 
It is believed, see for instance \cite{BV,Miot}, that the curve evolves in time by binormal curvature flow, to leading order. Therefore 
two huge differences in this problematic, compared to the present setting, are that{: (i)} it concerns a single phase problem, rather than a diphasic system where fluid and rigid bodies are considered, and {(ii)} the dynamics of the curve is way more intricate since it can deform in time, which corresponds to an infinite number of degrees of freedom. 
An important step toward this conjecture has recently been {achieved} by Jerrard and Seis in \cite{JS} where it is shown that under the assumption that the vorticity remains concentrated along a smooth curve when time proceeds, then this curve  approximatively evolves  by binormal curvature flow. 
Despite these important differences, the mathematical analysis shares some common features,  for example in the way to deal with singular line integral. 
In this respect, it is  interesting to compare  Lemma \ref{lem:dirac} with \cite[Section 4.5]{JS}.

\smallskip

Let us also mention  another possible comparison to a setting where the fluid is also assumed to be driven by the incompressible Euler equations: the work   \cite{GS}  where the zero radius limit of the dynamics of several solids in a 2D perfect incompressible fluid is studied. In particular it shares with the present setting the feature to deal with the case where the inertia of some rigid bodies converges to zero in the limit  so that their limit dynamics is a first-order equation rather than a second-order equation. 
 Accordingly the proofs both use  some modulated energy arguments, compare \cite[Section 7]{GS} and Section \ref{section-modu} below. However the forces which drive the limit dynamics are rather different in both settings, on the one hand they are gyroscopic type forces in the case of  \cite{GS}, similarly to the setting evoked in Remark  \ref{rk-modu},  and on the other hand they are viscous drag type forces in the present paper. Another difference is that in   \cite{GS}  the limit dynamics of the particles  are still coupled in the limit and they influence the fluid, as point vortices. On the other hand we deal here with some 3D rigid bodies shrinking to 1D limit rigid bodies instead of 2D rigid bodies shrinking to point particles.

\subsection{A few possible extensions as open problems}
\label{sec-mr-op}

In this subsection, we state a few open problems regarding some extensions of the analysis performed in this paper. 
\begin{op}
We let aside the particular case of rod-like filaments whose centerlines are line segments, which seems to require additional work due to the degeneracy of the limit Stokes resistance matrix $\hat {\mathcal K}$, for which Lemma  \ref{hat-inverse} does not hold true. Indeed, the resistance to rotations around the orientation of the rod like filaments scales like $\eps^2$ rather than $|\log \eps|^{-1}$.
In the case where the cross sections of the filaments are circular, some decoupling of the dynamics occurs and one can substitute an orientation vector $\xi $ in $S^2$ to the orientation matrix $Q_i $ in $SO(3)$ in order to describe the filaments' rotations. In such a case, it seems possible  to adjust our arguments in order to obtain a result similar to Theorem \ref{main}. However, in the case where the cross sections are not circular, the analysis seems more delicate.
\end{op}
\begin{op}
In view of the quantitative convergence result obtained in Theorem \ref{main}, a 
natural issue is to obtain, in the general case as in the case of line segments,  higher-order asymptotic expansions of the dynamics  with respect to $\eps$.
In particular it would be interesting to analyze  the influence of the cross sections on the dynamics. 
As it can be seen from the toy-model \eqref{ecosse}, and from the  compressed form  of the Newton equations given in \eqref{compressed}
where we highlight that the coefficients are related to the fluid state and 
depend on $\eps$,  establishing such asymptotic expansions  in time requires to prove some precise asymptotic description of the fluid state. 
In this direction it would be interesting to investigate if the analysis performed in \cite[Chapter 12.2]{MNP}, which overcomes the difficulties related to the boundary layers associated with non circular cross-sections  in the case of the Laplace equations with a circular centerline could be adapted to the present setting. 
Let us also mention that the influence of small scales in the cross sections can also be encoded by a different choice of the boundary conditions at the interface between the fluid phase and the solid phase. 
In this paper we concentrate on the case of the no-slip condition at the interface, but some other conditions could be considered as well, such as the Navier slip  
conditions, see \cite{HP20} and the references therein. Hence, it would be 
interesting to investigate whether or not the results of Theorem \ref{main} can be adapted to other boundary conditions. 
Moreover, one may wonder how a change of shape of the centerline curve influences  the dynamics, and the convergence of the dynamics, as the thickness parameter $\eps$ goes to zero. Another natural issue to consider is whether the asymptotic description can be extended up to a collision. For a similar  issue in a close setting  let us mention the papers \cite{BNDRDM1,BNDRDM2}. 
\end{op}
\begin{op} \label{op-cloud}
It would be interesting to investigate the case where the number $N$ of   filaments  goes to $+\infty$, while the thickness parameter  $\eps$
and the length $\ell$ of the filaments go to $0$ with $\eps << \ell << 1$, so that at the limit the phase corresponding to the rigid filaments is then a cloud of point particles. A first question is to identify the limit dynamics of these particles. Moreover one may identify a case where the density of these particles is  sufficient to create a collective effect at the main order on the fluid. This would extend the investigations on the Brinkman force for arbitrary shapes done in \cite{FNN,HMS}  from a case where anisotropy corresponds to a finite ratio  to the case of an infinite ratio. 
\end{op}
\subsection{A few more notations}
For $E \subset \R^d$ and $r>0$ we denote 
\begin{align}
    B_r(E) = \{ x \in \R^d : \dist(x,E) < r \}.
\end{align}
We use the convention that in our estimates the constant $C$ might change from line to line and might depend on the background velocity, on the number $N$ of filaments and on the functions specifying the reference filaments, i.e. $\Psi_i$, $\gamma_i$,  $R_i$, $1 \leq i \leq N$. 
We will always specify other dependencies and will make any dependence of $C$ on $\eps$ explicit.

We also point out that the following convention is used throughout the paper: the letter $u$ stands always for the fluid velocity, while the letter $v$ stands for the solid velocities.

There are several smallness requirements on $\eps$ throughout the paper,   typically denoted by  $\eps < \eps_0$. Similarly as for the constant $C$ we will for simplicity 
allow $\eps_0$ to change its value throughout the proofs of our results. Notice that we will usually take $\eps_0$  smaller than $\kappa$, where $\kappa$ is defined in Theorem \ref{main}.

%%%%%%%%%%%%%%%%%%%%%%%%%%%%%%%%%%%%

\section{Immersed boundary method for the steady Stokes system in presence of several thin  filaments}
\label{sec:singularity.method}

This section is devoted to the asymptotic behaviour, in the limit where the thickness $\eps$ of the filaments 
$(\mathcal S_j)_j$ converges to zero, of the solution  $u $ in $\dot H^1(\R^3)$ to the problem
\begin{align} \label{eq:Stokes.data.filaments}
	\begin{aligned}
		- \Delta u + \nabla p &= 0  \quad \text{ and }\quad \div u = 0 \quad \text{in } \mathcal F \\
		u(x) &= v(x)  \quad \text{ in }  \mathcal S_i, \\
		u(x) &= 0  \quad \text{ in } \mathcal S_j, \quad \text{ for} ~ j \neq i ,
	\end{aligned}
\end{align}
where $1 \leq i \leq N$ is given, as well as the data  $v$ which is assumed to satisfy
\begin{align}
\label{ass-v}
	v \in W^{1,\infty}( \mathcal S_i) \, \text{ satisfying }  \int_{ \mathcal S_i} \div v  = 0.
	\end{align}
	Here these filaments are supposed to  be given and fixed in terms of the reference filaments $\bar {\mathcal S}_j$ and some translations and rotations $h_j, Q_j$ as in \eqref{def:Stourne} but without any time dependence. The quantities $h_j, Q_j$ are supposed to be given in such a way that the filaments $(\mathcal S_j)_j$ do not overlap or touch. In fact all the results in this section that concern several filaments will be stated under the assumption that the minimal distance between the filament centerlines 
\begin{align} \label{dmin}
	\dmin := \min_{i \neq j} \dist(\mathcal C_i,\mathcal C_j) ,
\end{align}
is bounded from below and under a smallness condition on $\eps$. Together, this implies a lower bound on the distance between the filaments $\mathcal S_i$.

 To approximate the solution $u$ to \eqref{eq:Stokes.data.filaments}
 we rely on the auxiliary velocity field  $U_{\mathcal C_i}[v]$ given in   \eqref{def:U}. 
 This velocity field  solves the Stokes system in the full space $\R^3$ with an appropriate source term given as {Dirac masses} along the limit curve $\mathcal C_i$. 
It follows from \eqref{eq:U.formal}, from the decay  of {the kernel} $S$ defined by \eqref{e.defS} (and its derivative), and from the boundedness of $k$ from \eqref{ki} that for all $x \in \R^3 \setminus \mathcal C_i$
\begin{align}
	|U_{\mathcal C_i}[v](x)| \leq  C \|v\|_{L^\infty} \min \left\{ \log \left( 1 + \frac 1 {\dist(x,  \mathcal C_i)} \right), \frac 1 {\dist(x, \mathcal C_i)} \right \}, \label{eq:decay.U} \\
	|\nabla U_{\mathcal C_i}[v](x)| \leq  C \|v\|_{L^\infty} \min \left\{  \frac 1 {\dist(x, \mathcal C_i)},  \frac 1 {\left(\dist(x, \mathcal C_i)\right)^2} \right \}. \label{eq:decay.nabla.U} 
\end{align}

The following result establishes that $U_{\mathcal C_i}[v ]$ is the leading part of 	the solution $u$ to \eqref{eq:Stokes.data.filaments}, up to a renormalization factor 
$|\log \eps|^{-1}$ as long as the filaments are sufficiently separated in terms of $\dmin$ given by \eqref{dmin}.
\begin{thm} \label{lem:slender.several}
For all $d> 0$ there exists $\eps_0(d)>0$ and $C(d)>0$, for all filament configuration with $d_{\min} \geq d$ and for all $\eps $ in $(0, \eps_0)$, for any $v$ satisfying \eqref{ass-v} we have the following result. 
	 The solution $u$ to \eqref{eq:Stokes.data.filaments} satisfies 
	\begin{align} \label{est:u_H^1}
		\|u\|_{\dot H^1(\R^3)}  
		\leq C | \log \eps |^{-1/2} \|v\|_{W^{1,\infty} ( \mathcal S_i)}.
	\end{align}
Moreover,
	\begin{align} \label{est:u-U_i}
		\|u -  |\log \eps|^{-1} U_{\mathcal C_i}[v ]\|_{\dot H^1(\R^3 \setminus \mathcal S_i)}  
		\leq C | \log \eps |^{-1} \|v\|_{W^{1,\infty} ( \mathcal S_i)}, \\ \label{est:u-U_i.L^p}
		\|u -  |\log \eps|^{-1} U_{\mathcal C_i}[v ]\|_{W^{1,q}(K)}  
		\leq C | \log \eps |^{-3/2} \|v\|_{W^{1,\infty} ( \mathcal S_i)},
	\end{align}
	for any $q$ in $[ 1, 3/2)$ and any compact $K \subset \R^3$, where $C$ in \eqref{est:u-U_i.L^p} depends in addition on $q$ and $K$.
\end{thm}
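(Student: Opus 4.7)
The strategy is to exploit the minimizing property of the Stokes solution (it minimizes Dirichlet energy among divergence-free extensions of its boundary data), constructing an explicit divergence-free competitor built out of the profile $U_{\mathcal{C}_i}[v]$ defined in \eqref{def:U}. The crux is to show that, on $\partial \mathcal S_i$, the profile $U_{\mathcal{C}_i}[v]$ equals $|\log\eps|\, v$ to leading order, while its Dirichlet energy is of size $|\log\eps|$.

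The central technical step is a pointwise expansion along $\partial \mathcal S_i$: for $x \in \partial\mathcal S_i$ with closest point $y_0 \in \mathcal C_i$,
\begin{equation*}
U_{\mathcal{C}_i}[v](x) \;=\; |\log\eps|\, v(y_0) \;+\; O(1)\,\|v\|_{W^{1,\infty}(\mathcal S_i)}.
\end{equation*}
I would prove this by parametrising $\mathcal C_i$ by arc length $s$ near $y_0$, decomposing $\int_{\mathcal C_i}S(x-y)k(\tau)v\,d\mathcal H^1$ into a near part $|s|<\delta$ and a far part. Writing $x-y(s) = -s\tau(y_0) + \eps\nu + O(s^2+\eps s)$, the near integral is driven at leading order by
\begin{equation*}
\frac{1}{2}\int_{-\delta}^{\delta} \frac{ds}{\sqrt{s^2+\eps^2}}\, S_0\!\left(\tfrac{-s\tau(y_0)+\eps\nu}{\sqrt{s^2+\eps^2}}\right) k(\tau(y_0))\, v(y_0).
\end{equation*}
The odd-in-$s$ part of $S_0(p) = \tfrac{1}{8\pi}(\Id + p\otimes p)$ cancels, and the even part, combined with the algebraic identity $S_0(p)k(p) = \Id$ in \eqref{eqqe}, collapses the integrand to $\frac{1}{2\sqrt{s^2+\eps^2}}\Id\,v(y_0)$ modulo lower order, which integrates to $|\log\eps|\,v(y_0) + O(1)$. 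The far part and the geometric remainders (curvature of $\mathcal C_i$, non-circularity of $\Psi_i$, Lipschitz regularity of $v$) are all $O(1)$.

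Next, from \eqref{eq:decay.nabla.U} and tubular coordinates around $\mathcal C_i$, $\|\nabla U_{\mathcal{C}_i}[v]\|_{L^2(\R^3\setminus B(\mathcal C_i,\eps))}^2 \leq C|\log\eps|\,\|v\|_{W^{1,\infty}}^2$ since $\int_\eps^1 r^{-2}\cdot r\,dr \sim |\log\eps|$. From this and the previous step I build a divergence-free extension $\tilde u \in \dot H^1(\R^3)$ of the boundary data of $u$ as follows: start from $w_0 := |\log\eps|^{-1} U_{\mathcal{C}_i}[v]$, which satisfies Stokes away from $\mathcal C_i$; by the boundary expansion it equals $v$ on $\partial\mathcal S_i$ up to $O(|\log\eps|^{-1})$, and by \eqref{eq:decay.U} and $d_{\min}\geq d$ it is $O(|\log\eps|^{-1})$ near the other filaments. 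Cut off $w_0$ near each $\mathcal S_j$, $j\neq i$, with a cutoff supported in a fixed neighbourhood, and add a Bogovskii correction to restore $\div = 0$ — the compatibility condition holds because the flux across $\partial\mathcal S_j$ vanishes once modified appropriately. Near $\partial\mathcal S_i$, patch from $w_0$ to $v$ in a boundary layer of width $\eps$ with a second Bogovskii correction, whose compatibility follows from \eqref{ass-v}. Each correction is supported in a region of controlled volume and has $L^\infty$ size $O(|\log\eps|^{-1})$, so its gradient energy is $O(|\log\eps|^{-2})$; combined with $\|\nabla w_0\|_{L^2}^2 \leq C|\log\eps|^{-1}$, this yields $\|\tilde u\|_{\dot H^1}^2 \leq C|\log\eps|^{-1}\|v\|_{W^{1,\infty}}^2$, and \eqref{est:u_H^1} follows by the minimization property.

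For \eqref{est:u-U_i}, set $w := u - |\log\eps|^{-1} U_{\mathcal{C}_i}[v]$; it is divergence-free and Stokes in $\mathcal F$, with boundary data of size $O(|\log\eps|^{-1})\,\|v\|_{W^{1,\infty}}$ on every $\mathcal S_j$ by Step~1 and the decay of $U$. Repeating the Bogovskii/cutoff construction for this smaller data gives a competitor of energy $O(|\log\eps|^{-2})$, hence \eqref{est:u-U_i}. Finally, \eqref{est:u-U_i.L^p} is obtained by separating the gradient into the contribution near $\partial \mathcal S_i$, where $|\nabla U|\lesssim \eps^{-1}$ over a tube of volume $\eps^2$ (bounded in $L^q$ for $q<3/2$ with the improved rate by Hölder against the $\dot H^1$ estimate), and the contribution on $K\setminus B(\mathcal C_i, r_0)$, where $U$ is smooth and the pointwise decay \eqref{eq:decay.nabla.U} together with interior Stokes regularity gives a bound of the stated order.

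\textbf{Main obstacle.} The delicate point is Step~1: tracking uniformly the cancellation between the two terms of $S_0$, with the unit vector $p = (-s\tau + \eps\nu)/\sqrt{s^2+\eps^2}$ varying over the integration range, and making the identity $S_0(p)k(p)=\Id$ effective in the presence of non-circular cross sections, of curvature of $\mathcal C_i$, and of the Lipschitz dependence of $v$ along the filament. Controlling the remainder with a constant depending only on the reference data and on $d$ (not on the specific configuration) is the main analytic burden.
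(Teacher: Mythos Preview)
Your plan for \eqref{est:u_H^1} and \eqref{est:u-U_i} is essentially the paper's proof: a pointwise expansion on $\partial\mathcal S_i$ (the paper's Lemma~\ref{lem:dirac}), the energy bound \eqref{eq:norm.U_i}, a divergence-free extension via cutoff plus Bogovski\u{\i}, and then Helmholtz minimization. Two remarks on the execution. First, for non-closed filaments the paper replaces $\mathcal C_i$ by the $\eps$-truncated curve $\mathcal C_i^\eps$ (Section~\ref{sec:non-closed}) so that $\dist(\mathcal C_i^\eps,\partial\mathcal S_i)\geq c\eps$; without this the pointwise remainder in Step~1 picks up a logarithmic divergence at the endpoints, which is harmless only after cutting. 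Second, your ``Bogovski\u{\i} in a boundary layer of width $\eps$'' is delicate: such a shell is \emph{not} a John domain with constant uniform in $\eps$ (after rescaling by $1/\eps$ its diameter is $\sim 1/\eps$). The paper avoids this by doing the Bogovski\u{\i} on the \emph{fixed} domain $B_d(\mathcal C_j)\setminus\mathcal S_j$, proved to be a John domain uniformly in $\eps$ (Lemma~\ref{nioul}), and building the extension by composition with the nearest-point projection onto $\partial\mathcal S_j$ (Lemma~\ref{Claim}). Your energy count is right; the construction just needs this relocation of the Bogovski\u{\i}.

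There is, however, a genuine gap in your argument for \eqref{est:u-U_i.L^p}. Splitting $K$ into a tube near $\mathcal C_i$ and its complement and invoking H\"older against the $\dot H^1$-bound, or interior Stokes regularity away from the filament, yields at best
\[
\|u-|\log\eps|^{-1}U_{\mathcal C_i}[v]\|_{W^{1,q}(K)}\leq C|\log\eps|^{-1}\|v\|_{W^{1,\infty}},
\]
not $|\log\eps|^{-3/2}$: on the outer region the volume is of order $1$ and interior regularity only transfers the $\dot H^1$-bound, which is $|\log\eps|^{-1}$. The extra $|\log\eps|^{-1/2}$ comes from a different mechanism, namely the smallness of the \emph{Stokes capacity} of thin filaments. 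The paper obtains it by a duality argument (Lemma~\ref{lem:improved}): test $\nabla w$ against $g\in L^{q'}$ compactly supported, solve $-\Delta\varphi+\nabla\pi=\dv g$ in $\R^3$, integrate by parts, and extend $\varphi$ from $\cup_j\mathcal S_j$ to $\R^3$ using the \emph{logarithmic} cutoff
\[
\theta_\eps(x)=\frac{\log\dist(x,\mathcal C_i)-\log d}{\log(\eps R)-\log d}\quad\text{on }B_d(\mathcal C_i)\setminus B_{\eps R}(\mathcal C_i),
\]
for which $\|\nabla\theta_\eps\|_{L^2}^2\leq C|\log\eps|^{-1}$ (the two-dimensional capacity). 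This is what produces the gain $|\log\eps|^{-1/2}$ on top of \eqref{est:u-U_i}. Your sketch does not contain this ingredient, and without it \eqref{est:u-U_i.L^p} does not follow.
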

To prove Theorem \ref{lem:slender.several}, we will proceed in several steps. 
First, in Subsection \ref{sec:pointwise}, we will establish pointwise estimates of $U_{\mathcal C_i}[v]$. 
Then in Subsection \ref{zeveryproof}, we will deduce uniform estimates in $\dot H^1(\R^3)$ based on Helmholtz' minimum dissipation theorem, see Theorem \ref{Helmholtz}. 
This enables to tackle the very proof of Theorem \ref{lem:slender.several} in Subsection \ref{zeveryproof}. 

Theorem \ref{lem:slender.several}  will be used in Section \ref{sectiondassaut} to prove the  part of Theorem \ref{main}  devoted to the asymptotic behavior of the fluid, once  the  asymptotic behavior of the dynamics of the filaments is obtained.

Theorem \ref{lem:slender.several} is also useful to establish 
 approximation results of the force {exerted by the fluid on the filaments.} 
 To cover the different uses which we will need, we first show a rather general result, where we make use of the elementary rigid velocities   $v_{i,\alpha}$   defined in \eqref{t1.6}.  We associate with these fields, for $1 \leq \alpha\leq 6$ and $1 \leq i \leq N$,   the unique solutions $ V_{i,\alpha}$ in $\dot H^1(\R^3)$  to 
\begin{subequations}
 \label{sti}
\begin{gather}
\label{sti1}
 -\Delta  V_{i,\alpha}  + \nabla  P_{i,\alpha}    = 0    \quad \text{ and } \quad  \div V_{i,\alpha} = 0  ,    \quad  \text{ in }   \mathcal F , 
\\ \label{sti3}
   V_{i,\alpha}  =  \delta_{i,j}  v_{i,\alpha}   , \quad \text{in }  \mathcal S_j  . 
\end{gather}
\end{subequations}
The vector fields $ V_{i,\alpha} $ are smooth, decay as $1/ \vert x \vert$ at infinity, their first-order derivatives and the associated pressures $P_{i,\alpha}$ decay  as $1/ \vert x \vert^2$.

The next result concerns the approximation of force and torque.
\begin{cor} \label{prop:asymptotic.resistance}
	For all $d > 0$ there exists a constant $C = C(d) > 0$ such that for all $\eps $ in $(0, \eps_0(d))$, for all filament configuration with $\dmin \geq d$, for all divergence-free functions $v \in W^{1,\infty}(\cup_{j=1}^N \mathcal S_j)$ and all $1 \leq i \leq N$,  for $1 \leqslant \alpha \leqslant 6 $, 
	\begin{align}
	\begin{aligned}
	   &\left| \int_{\cup_{j=1}^N \partial \mathcal S_j}  (\Sigma (V_{i,\alpha}, P_{i,\alpha}) n) \cdot v \, d\mathcal{H}^2 -  |\log \eps|^{-1} 
	   I_{\mathcal{C}_i }  [v_{i,\alpha}, v]  
	    \right| \\
	    & \quad
\leq C |\log \eps|^{- 3/2} \|v\|_{W^{1,\infty}{(\cup_{j=1}^N \mathcal S_j)}}.
    \end{aligned} \label{eq:Faxen.force}
	\end{align}
\end{cor}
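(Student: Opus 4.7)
\textbf{Proof plan for Corollary~\ref{prop:asymptotic.resistance}.}
The plan is three-fold: (i) convert the boundary integral into a bilinear energy against an auxiliary test field $\tilde v$; (ii) extract the leading piece of $V_{i,\alpha}$ via Theorem~\ref{lem:slender.several}; (iii) identify the resulting main term with $|\log\eps|^{-1}I_{\mathcal C_i}[v_{i,\alpha},v]$ through the defining PDE~\eqref{singm} of $U_{\mathcal C_i}[v_{i,\alpha}]$. For (i) I would choose $\tilde v\in\dot H^1(\R^3)$ to be the exterior Stokes solution with Dirichlet data $v$ on each $\partial\mathcal S_j$ (the compatibility $\int_{\partial\mathcal S_j}v\cdot n=0$ holds since $\div v=0$ in $\mathcal S_j$), extended inside the filaments by $v$ itself. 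Writing $\tilde v$ by linearity as the sum over $j$ of the Stokes solutions with nonzero data only on $\mathcal S_j$, the estimate~\eqref{est:u_H^1} applied $N$ times yields the crucial sub-leading bound
\begin{equation*}
\|\tilde v\|_{\dot H^1(\R^3)}\leq C|\log\eps|^{-1/2}\|v\|_{W^{1,\infty}}.
\end{equation*}
Since $V_{i,\alpha}$ is rigid (or zero) on every $\mathcal S_j$, one has $D(V_{i,\alpha})\equiv 0$ there. Integration by parts in $\mathcal F$ using $\nabla\cdot\Sigma(V_{i,\alpha},P_{i,\alpha})=0$, $\div\tilde v=0$ and $\tilde v=v$ on $\cup_j\partial\mathcal S_j$ then gives
\begin{equation*}
\int_{\cup_j\partial\mathcal S_j}(\Sigma(V_{i,\alpha},P_{i,\alpha})n)\cdot v\,d\mathcal H^2=2\int_{\R^3\setminus\mathcal S_i}D(V_{i,\alpha}):D(\tilde v).
\end{equation*}

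For step (ii) I would invoke~\eqref{est:u-U_i} to decompose $V_{i,\alpha}=|\log\eps|^{-1}U_{\mathcal C_i}[v_{i,\alpha}]+R_{i,\alpha}$ with $\|R_{i,\alpha}\|_{\dot H^1(\R^3\setminus\mathcal S_i)}\leq C|\log\eps|^{-1}$. Cauchy--Schwarz against $\tilde v$ bounds the $R_{i,\alpha}$-contribution by $C|\log\eps|^{-1}\cdot|\log\eps|^{-1/2}\|v\|_{W^{1,\infty}}=C|\log\eps|^{-3/2}\|v\|_{W^{1,\infty}}$, precisely the target rate. For step (iii), the defining PDE~\eqref{singm} together with $\div\tilde v=0$ and decay at infinity yield (after a regularization to handle the line singularity)
\begin{equation*}
2\int_{\R^3}D(U_{\mathcal C_i}[v_{i,\alpha}]):D(\tilde v)=\langle\mu_{\mathcal C_i}[v_{i,\alpha}],\tilde v\rangle=I_{\mathcal C_i}[v_{i,\alpha},v],
\end{equation*}
using $\tilde v|_{\mathcal C_i}=v|_{\mathcal C_i}$ by construction. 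The correction $2\int_{\mathcal S_i}D(U_{\mathcal C_i}[v_{i,\alpha}]):D(\tilde v)$ needed to pass from $\int_{\R^3}$ to $\int_{\R^3\setminus\mathcal S_i}$ is $O(\eps)\|v\|_{W^{1,\infty}}$, by combining the pointwise bound $|\nabla U_{\mathcal C_i}|\lesssim\dist(\cdot,\mathcal C_i)^{-1}$ from~\eqref{eq:decay.nabla.U} (whose $L^1$-mass on an $\eps$-tube around $\mathcal C_i$ is $O(\eps)$) with $\|D(\tilde v)\|_{L^\infty(\mathcal S_i)}\leq\|v\|_{W^{1,\infty}}$. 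After multiplication by $|\log\eps|^{-1}$, this correction is absorbed into the $|\log\eps|^{-3/2}$ error, proving~\eqref{eq:Faxen.force}.

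The main technical obstacle I anticipate is the rigorous justification of the distributional pairing $\int_{\R^3}\nabla U_{\mathcal C_i}[v_{i,\alpha}]:\nabla\tilde v=\langle\mu_{\mathcal C_i}[v_{i,\alpha}],\tilde v\rangle$ despite the line singularity of $U_{\mathcal C_i}$ on $\mathcal C_i$. This can be handled by mollifying the line measure (equivalently, by smoothing the Stokes kernel $S$), exploiting that $\nabla U_{\mathcal C_i}\in L^p_{\loc}(\R^3)$ for every $p<2$ and that $\tilde v$ coincides with the $W^{1,\infty}$ field $v$ on a neighborhood of $\mathcal C_i$ (so that its trace on $\mathcal C_i$ is continuous and pointwise defined), and passing to the limit. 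All remaining manipulations are routine energy estimates and the final bound follows by collecting the two error sources $|\log\eps|^{-3/2}$ and $\eps|\log\eps|^{-1}$.
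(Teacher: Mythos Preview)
Your proposal is correct and follows essentially the same route as the paper: the auxiliary divergence-free extension $\tilde v$ is exactly the paper's $w=\sum_j w_j$, the splitting $V_{i,\alpha}=|\log\eps|^{-1}U_{\mathcal C_i}[v_{i,\alpha}]+R_{i,\alpha}$ and the Cauchy--Schwarz bound on the remainder match the paper's use of~\eqref{est:u-U_i} against~\eqref{est:u_H^1}, and the identification with $I_{\mathcal C_i}[v_{i,\alpha},v]$ via the distributional equation~\eqref{singm} together with the $O(\eps)$ interior correction on $\mathcal S_i$ is the same mechanism as the paper's equations~\eqref{24}--\eqref{doute}.

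The only organizational difference is that the paper sidesteps precisely the ``main technical obstacle'' you flag: instead of pairing $\mu_{\mathcal C_i}[v_{i,\alpha}]$ against $\tilde v$ on all of $\R^3$ (where $\nabla U_{\mathcal C_i}\notin L^2$), the paper integrates by parts \emph{inside} $\mathcal S_i$ with the smooth test function $v$ itself, obtaining $\int_{\partial\mathcal S_i}\Sigma(u_{i,\alpha},p_{i,\alpha})n\cdot v = |\log\eps|^{-1}I_{\mathcal C_i}[v_{i,\alpha},v]+\int_{\mathcal S_i}\Sigma(u_{i,\alpha},p_{i,\alpha}){:}D(v)$ directly, and then handles the exterior comparison $\int_{\cup_j\partial\mathcal S_j}\Sigma(V_{i,\alpha})n\cdot v-\int_{\partial\mathcal S_i}\Sigma(u_{i,\alpha})n\cdot v$ separately as a $\dot H^1(\R^3\setminus\mathcal S_i)$ pairing where everything is square-integrable. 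This decomposition (interior with $v$, exterior with $w$) makes the regularization step you anticipate unnecessary; your full-space pairing is of course also justifiable, but the paper's ordering is slightly more economical.
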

The proof of Corollary \ref{prop:asymptotic.resistance} will be given in Subsection \ref{sec-prop:asymptotic.resistance}.
\ \par \

A first particular useful application of Corollary  \ref{prop:asymptotic.resistance} corresponds to the case where $v=\delta_{i,j}v_{j,\beta}$ for  $1 \leq \beta \leq 6$ and $1 \leq i,j \leq N$. It entails that for $1 \leq \alpha,\beta \leq 6$ and $1 \leq i,j \leq N$, the quantity 
\begin{equation} \label{resis}
 \mathcal K_{i,\alpha,j,\beta} := \int_{  \partial  \mathcal S_j}  (\Sigma ( V_{i,\alpha}, P_{i,\alpha}) n) \cdot v_{j,\beta} \, d\mathcal{H}^2 ,
 \end{equation}
 satisfies 
	\begin{align}
		\left| \mathcal K_{i,\alpha,j,\beta}  -  
		|\log \eps|^{-1}   \hat{\mathcal K}_{i,\alpha,j,\beta} 
		 \right| &\leq C | \log \eps |^{-3/2}. \label{eq:approx.K}
	\end{align}
Recall that the limit Stokes resistance matrices	$ \hat{\mathcal K}_{i,\alpha,j,\beta}$ are defined in \eqref{hatK} and are considered here as  being associated with a fixed position of the centerline curves.

We will denote by  $\mathcal K$ the $6N \times 6N$ matrix  whose coefficients are  these quantities $\mathcal K_{i,\alpha,j,\beta}$,  for $1 \leq \alpha,\beta \leq 6$ and $1 \leq i,j \leq N$.  
Recall that the matrix $\mathcal K$ is referred to as   the  steady Stokes  resistance tensor, that it depends on all the positions $h_i$ and orientations $Q_i$ and is symmetric positive definite, as a consequence of integrations by parts, energy and uniqueness properties of the exterior steady Stokes system. Let us refer for example  to \cite[Chapter $2$]{Kim-Karilla}, \cite[Chapter $5$]{Galdi},  \cite[Chapter $2$ and $3$]{Lady}.

Moreover it follows immediately from \eqref{eq:approx.K} and the coercivity of $\hat{\mathcal K}$ that we observed in \eqref{eq:K.hat.coercive} (recall that $\hat{\mathcal K}$ is block-diagonal) that 
 	\begin{align}
		  \mathcal K &\geq  \frac1C |\log \eps|^{-1} \Id \quad   \text{and }  \quad   | \mathcal K^{-1} | 
	   \leq C | \log \eps | .
		  \label{eq:K.coercive}
		  \end{align}
		\ \par \

Another particular use of  Corollary  \ref{prop:asymptotic.resistance}
 is the case where $v=u^\flat$.
It will provide some estimates on the so-called Fax\'en forces and torques defined by 
\begin{equation}\label{clour}
 \mathfrak{f}^\flat  :=  ( (F^\flat_{i},T^\flat_{i} ))_{1 \leq i \leq N}.
\end{equation}
where
\begin{equation}
\label{fab3}
F^\flat_{i}  := ( F^\flat_{i,\alpha}  )_{\alpha=1,2,3} \quad   \text{ and }  \quad  T^\flat_{i}  := (T^\flat_{i,\alpha} )_{\alpha=1,2,3} ,
\end{equation}
with, for $\alpha=1,2,3$, $1\leq i \leq N$, 
\begin{align}
\label{fab1}
F^\flat_{i,\alpha}  := 
 \int_{  \cup_{j=1}^N \partial  \mathcal   S_j } (\Sigma (V_{i,\alpha}, P_{i,\alpha}) n) \cdot u^\flat \, d\mathcal{H}^2 ,
\\ \label{fab2}
T^\flat_{i,\alpha}  := 
 \int_{  \cup_{j=1}^N \partial  \mathcal   S_j }  ( \Sigma (V_{i, \alpha +3 }, P_{i, \alpha+3}) n) \cdot u^\flat \, d\mathcal{H}^2 .
\end{align}
Indeed 
 	applying 	\eqref{eq:Faxen.force}  to $v=u^\flat$, we arrive at
		\begin{align}  \label{ofint}
	   | \mathfrak{f}^\flat -| \log \eps |^{-1} \hat{\mathfrak f}^\flat| 
	   & \leq C | \log \eps |^{-3/2} \|u^\flat\|_{W^{1,\infty}}.
	\end{align}
Above 
$$ {\hat{\mathfrak{f}}%_i
^\flat }
:= (\hat{\mathfrak{f}}_i^\flat)_{ 1 \leq i \leq N} , $$ 
where we recall that, for $1 \leq i \leq N$, the vector $\hat{\mathfrak{f}}^\flat_{i} $ gathering the limit  Fax\'en forces and torques is defined in \eqref{def:f.hat.flat.frac} and is here considered as {being} associated with a fixed position of the centerline curves.

\subsection{Modified centerlines for the non-closed filaments}
\label{sec:non-closed}

To simplify the proof of Theorem \ref{lem:slender.several},
we introduce slightly modified centerline curves in the case of non-closed filaments, i.e. the case when $\gamma_i$ is not periodic. In this case, we cut an $\eps$ layer at both endpoints. More precisely, we  define 
\begin{align}
\bar {\mathcal C}_i^\eps := \gamma_i([\eps,L_i - \eps]),
\end{align} 
and correspondingly, we write $\mathcal C_i^\eps$ for the curve which is obtained from $\bar {\mathcal C}_i^\eps$ through translation and rotation.
This cut-off version satisfies 
\begin{align} \label{eq:distance.cut-off.centerline}
    \dist(\mathcal C_i^\eps,\partial \mathcal S_i) \geq c \eps
\end{align}
for all $\eps < \eps_0$ and some $c > 0$ independent of $\eps$
We remark that $\mathcal C_i^\eps$ resembles the so-called effective centerline in \cite{Mori-Ohm-free-ends}.
 
In the case of a closed filament, i.e. when $\gamma_i$ is periodic, \eqref{eq:distance.cut-off.centerline} is automatically satisfied for $\mathcal C_i^\eps := \mathcal C_i$.

We show the following lemma, which allows us to prove Theorem \ref{lem:slender.several} by replacing 
$U_{\mathcal C_i}$ in \eqref{est:u-U_i} by
$U_{\mathcal C_i^\eps}$.

\begin{Lemma} \label{lem:C^eps}
    Let $v$ as in \eqref{ass-v}.
Then there exists $C > 0$ such that for all $\eps $ in $(0, \eps_0)$,
\begin{align}\label{est:U-U_i.H^1}
\|U_{\mathcal C_i^\eps}[v ] -   U_{\mathcal C_i}[v ]\|_{\dot H^1(\R^3 \setminus \mathcal S_i)}  
		\leq C \sqrt{\eps} \|v\|_{L^{\infty} ( \mathcal S_i)}.
\end{align}
\end{Lemma}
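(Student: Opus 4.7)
The plan is to exploit the linearity of $v \mapsto U_{\mathcal C}[v]$ in the curve $\mathcal C$ in order to reduce the estimate to a simpler computation. Since $\mathcal C_i = \mathcal C_i^\eps \cup \Gamma_\eps$ with $\Gamma_\eps := \gamma_i([0,\eps]) \cup \gamma_i([L_i-\eps, L_i])$, and the integrand in \eqref{eq:U.formal} is additive with respect to the domain of integration, one has
\[
U_{\mathcal C_i}[v] - U_{\mathcal C_i^\eps}[v] = U_{\Gamma_\eps}[v],
\]
so the task becomes to bound $\|U_{\Gamma_\eps}[v]\|_{\dot H^1(\R^3 \setminus \mathcal S_i)}$ by $\sqrt{\eps}\,\|v\|_{L^\infty(\mathcal S_i)}$. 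As a first step I would record the pointwise gradient estimate
\[
|\nabla U_{\Gamma_\eps}[v](x)| \leq C \|v\|_{L^\infty(\mathcal S_i)} \int_{\Gamma_\eps} \frac{d\mathcal H^1(y)}{|x-y|^2} \leq C \|v\|_{L^\infty(\mathcal S_i)} \min\!\left(\frac{\eps}{d(x)^2},\, \frac{1}{d(x)}\right),
\]
where $d(x) := \dist(x, \Gamma_\eps)$; the first bound uses $|\Gamma_\eps| = 2\eps$, the second is the standard line-integral bound obtained by parametrizing $y$ by arc length around its closest point to $x$.

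I would then integrate $|\nabla U_{\Gamma_\eps}[v]|^2$ over $\R^3 \setminus \mathcal S_i$ by splitting this set into a far region $\{d(x) > \eps\}$ and a near region $\{d(x) \leq \eps\}$. In the far region the sausage-volume estimate (cylinder around $\Gamma_\eps$ of length $\sim \eps$ with hemispherical caps, up to curvature corrections) gives $\mathcal H^3(\{d(x) \leq r\}) \leq C(\eps r^2 + r^3)$, whence
\[
\int_{d(x) > \eps} \frac{\eps^2}{d(x)^4}\, dx \leq C\eps^2 \int_\eps^{\infty} \left(\frac{\eps}{r^3} + \frac{1}{r^2}\right) dr \leq C\eps.
\]

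The only mildly delicate step, which I expect to be the main obstacle, is the analysis of the near region. Since $\Gamma_\eps$ lies inside the filament $\mathcal S_i$ and the latter is a tube of thickness comparable to $\eps$ terminated by flat end-discs, a point $x \notin \mathcal S_i$ can have $d(x) < \eps$ only if it sits just beyond one of those end-discs. Choosing cylindrical coordinates $(r,\theta,z)$ aligned with $\gamma_i'(0)$ and centered at $\gamma_i(0)$, the uniform comparability of the cross sections $\Psi_i(s,B_1(0))$ to a disk shows that the near region at this end is contained in the half-ball $\{z < 0,\, r^2 + z^2 \leq C\eps^2\}$, on which $d(x)$ is comparable to $\sqrt{r^2 + z^2}$. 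Passing to spherical coordinates $\rho = \sqrt{r^2+z^2}$, the integrability of $\rho^{-2}$ against $\rho^2\, d\rho$ in three dimensions then yields
\[
\int_{\text{half-ball}} \frac{dx}{d(x)^2} \leq C \int_0^{C\eps} d\rho \leq C\eps,
\]
and the contribution from the other endpoint is treated symmetrically. Summing the far- and near-field contributions gives $\|\nabla U_{\Gamma_\eps}[v]\|_{L^2(\R^3 \setminus \mathcal S_i)}^2 \leq C\eps \|v\|_{L^\infty(\mathcal S_i)}^2$, which is the claimed bound after taking a square root. In the closed-filament case $\Gamma_\eps = \emptyset$ and there is nothing to prove.
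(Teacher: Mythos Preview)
Your approach matches the paper's: both use linearity to reduce to $U_{\Gamma_\eps}[v]$ with $\Gamma_\eps = \mathcal C_i \setminus \mathcal C_i^\eps$, record the same pointwise gradient bound $\min(1/d,\,\eps/d^2)$, and then integrate in $L^2$. The paper carries out the last step more cleanly by first observing that for $x \notin \mathcal S_i$ one has $\dist(x,\Gamma_\eps) \geq c\,\dist(x,\partial \mathcal C_i)$, which replaces the distance to the short arcs by the distance to the two \emph{endpoints} and reduces everything to a single radial integral around each; this bypasses your sausage/coarea computation and the separate near-region geometry.

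One small imprecision in your near-region step: the claim that $\{x \notin \mathcal S_i : d(x) < \eps\}$ lies entirely in the half-ball $\{z<0\}$ beyond the end-face presumes that the cross-section $\Psi_i(s,B_1(0))$ contains the unit disk, which is not assumed. If the inner radius $\rho_-$ of the cross-sections is $<1$, then points alongside the tip (with $z \in [0,\eps]$ and perpendicular distance between $\rho_-\eps$ and $\eps$) can also lie in your near region. This is harmless---either lower the near/far threshold from $\eps$ to $\rho_-\eps$, or note that this extra sliver has volume $O(\eps^3)$ while $d(x)^{-2} = O(\eps^{-2})$ there, so it contributes $O(\eps)$ as required.
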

\begin{proof}
By linearity, $U_{\mathcal C_i}[v ] - U_{\mathcal C_i^\eps}[v ]   = U_{\mathcal C_i \setminus \mathcal C_i^\eps}[v ] $.
Similarly to the pointwise estimate \eqref{eq:decay.nabla.U}, we observe that for all $x\in\R^3\setminus\mathcal C_i$,
\begin{align}
    |\nabla U_{\mathcal C_i \setminus \mathcal C_i^\eps}[v ](x)| \leq  \|v\|_{L^{\infty} ( \mathcal S_i)} \min \left\{  \frac 1 {\dist(x,\mathcal C_i \setminus \mathcal C_i^\eps)},  \frac \eps {\left(\dist(x, \mathcal C_i \setminus \mathcal C_i^\eps)\right)^2} \right \}. \label{est:U.C_i.C_i^eps}  .
\end{align}
Denote by $\partial \mathcal C_i$ the two endpoints of the curve $\mathcal C_i$. Then, using that for all $x \in \R^3 \setminus \mathcal S_i$
\begin{align}
{\dist(x,\mathcal C_i \setminus \mathcal C_i^\eps)} \geq c  \dist(x,\partial \mathcal C_i)
\end{align}
estimate \eqref{est:U.C_i.C_i^eps} yields \eqref{est:U-U_i.H^1}.
\end{proof}

\subsection{Pointwise estimates}
\label{sec:pointwise}

This section is devoted to the analysis of the behavior of $U_{\mathcal C_i^\eps}[v]$ on the boundary of the filament $\mathcal S_i$.

We introduce  $\xi_i$ as the orthogonal projection from $\partial   \mathcal S_i$ to $\mathcal C_i$ which is well-defined for $\eps$ sufficiently small since, by assumption,  $\gamma_i$ has no self-intersections. 
 Moreover, we denote by $\partial \mathcal C_i^\eps$ the boundary of the $1$-dimensional manifold $\mathcal C_i^\eps \subset \R^3$, which is empty if $\gamma_i$ is a closed curve (as we defined $\mathcal C_i^\eps = \mathcal C_i$ in this case)  and contains precisely $2$ points otherwise.

\begin{Lemma} \label{lem:dirac}
Let $v$ as in \eqref{ass-v}.
Then there exists $C > 0$ such that for all $\eps $ in $(0, \eps_0)$,
\begin{gather} \label{eq:U_i.boundary}
	|U_{\mathcal C_i}[v] -  |\log \eps |\,  v \circ \xi_i |(x)
	 \leq C \|v\|_{W^{1,\infty} {(\mathcal S_i)}} \left(1 + |\log(\dist(x,\partial \mathcal C_i^\eps))| \right),
\end{gather}
on  $\partial \mathcal S_i$. 
Moreover, 
\begin{align} \label{eq:norm.U_i}
	\|U_{\mathcal C_i}[v]\|_{\dot H^1(\R^3 \setminus \mathcal S_i)} \leq  C |\log \eps|^{\frac 1 2} \|v\|_{L^\infty(\mathcal S_i)} ,
		\end{align}
and
\begin{align}
\label{eq:norm.U_i.subcritical}
	\|U_{\mathcal C_i}[v]\|_{W^{1,p}(K)} \leq C_{K,p} \|v\|_{L^\infty{(\mathcal S_i)}},
\end{align}
for all $p $ in $[1,2)$ and all compact $K \subset \R^3$.
Furthermore, for all $d>0$, there exists  $C$ depending on $d$ such that 
	\begin{equation}  \label{eq:norm.U_i-bis}
	\,	\|U_{\mathcal C_i}[v]\|_{{W^{1,\infty}}( \R^3 \setminus B_d(\mathcal C_i))} \leq  C  \|v\|_{L^\infty{(\mathcal S_i)}} \, \text{ for} ~ j \neq i .
	\end{equation}
\end{Lemma}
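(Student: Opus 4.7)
The plan is to reduce all four estimates to a careful analysis of the line integral
\[
U_{\mathcal C_i}[v](x) = \frac{1}{2}\int_{\mathcal C_i} \frac{1}{|x-y|}\, S_0\!\left(\frac{x-y}{|x-y|}\right) k(\tau(y))\, v(y)\, d\mathcal{H}^1(y),
\]
exploiting the algebraic identity $S_0(p)k(p) = \Id$ recorded in \eqref{eqqe}. The pointwise bound \eqref{eq:U_i.boundary} is the heart of the lemma; the three global estimates \eqref{eq:norm.U_i}, \eqref{eq:norm.U_i.subcritical} and \eqref{eq:norm.U_i-bis} will follow from the pointwise inequalities \eqref{eq:decay.U}--\eqref{eq:decay.nabla.U} already at our disposal.

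For \eqref{eq:U_i.boundary}, I fix $x \in \partial \mathcal S_i$, set $x_0 = \xi_i(x)$ and parametrize $\mathcal C_i$ by arc length so that $y(s_0) = x_0$. Since $\xi_i$ is the orthogonal projection, $(x-x_0)\perp \tau(s_0)$ and $|x-x_0|\sim \eps$. The key decomposition is
\[
S_0\!\left(\frac{x-y(s)}{|x-y(s)|}\right) k(\tau(s)) = \Id + N(x,s),
\]
where $N$ vanishes whenever $(x-y(s))/|x-y(s)|$ is parallel to $\tau(s)$ by \eqref{eqqe}. A Taylor expansion in the rescaled variable $\rho = (s-s_0)/\eps$ yields $|x-y(s)|^2 = \eps^2(1+\rho^2)(1+O(\eps))$ and $|N(x,s)|\lesssim (1+\rho^2)^{-1/2}$. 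Combined with the Lipschitz splitting $v(y(s)) = v(x_0) + O(\|\nabla v\|_\infty |s-s_0|)$, this reduces the off-diagonal piece $\int N v/|x-y(s)|\,ds$ to convergent expressions of the form $\int(1+\rho^2)^{-3/2}\,d\rho$ and contributes $O(\|v\|_{W^{1,\infty}(\mathcal S_i)})$. The diagonal piece $\tfrac{1}{2} v(x_0)\int ds/|x-y(s)|$ is computed through $\int(\eps^2+\sigma^2)^{-1/2}d\sigma = 2\sinh^{-1}(\cdot)$, giving $2|\log\eps|+O(1)$ far from any non-closed endpoint; for $s_0$ within $\sim \eps$ of such an endpoint, the deficit is controlled precisely by $|\log\dist(x,\partial\mathcal C_i^\eps)|+O(1)$.

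With \eqref{eq:decay.U}--\eqref{eq:decay.nabla.U} in hand, the remaining three estimates are direct tubular computations. For \eqref{eq:norm.U_i}, integrating $|\nabla U|^2 \lesssim \|v\|_\infty^2 \min\{d^{-2},d^{-4}\}$ with $d=\dist(\cdot,\mathcal C_i)$ in tubular coordinates around each filament produces $\int_\eps^1 d^{-1}\,dd \sim |\log\eps|$ from the inner region and a convergent far-field tail. The subcritical bound \eqref{eq:norm.U_i.subcritical} uses the same coordinates, with $p<2$ ensuring integrability of $d^{1-p}$ near $d=0$, together with $|U|\lesssim \log(1+d^{-1})$ from \eqref{eq:decay.U}. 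Finally, \eqref{eq:norm.U_i-bis} is immediate from \eqref{eq:decay.U}--\eqref{eq:decay.nabla.U} after taking $\dist(x,\mathcal C_i) \geq d$.

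The main obstacle is the first step: identifying the $|\log\eps|$ leading coefficient while cleanly isolating the delicate endpoint contribution $|\log\dist(x,\partial\mathcal C_i^\eps)|$, and in particular controlling the off-diagonal matrix $N(x,s)$ at the transition scale $|s-s_0|\sim \eps$, where both $|N|$ and $|x-y(s)|^{-1}$ are individually large but combine into an integrable expression precisely because of the cancellation $S_0(p)k(p)=\Id$.
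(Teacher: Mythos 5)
Your proposal is correct and follows essentially the same route as the paper: the three global bounds are deduced directly from the pointwise decay estimates \eqref{eq:decay.U}--\eqref{eq:decay.nabla.U}, and the boundary estimate \eqref{eq:U_i.boundary} rests on the cancellation $S_0(\tau)k(\tau)=\Id$ from \eqref{eqqe}, the explicit $\sinh^{-1}$ evaluation of $\int ds/|x-y(s)|$ giving $2|\log\eps|$ plus an endpoint deficit of size $|\log(\dist(x,\partial\mathcal C_i^\eps))|$, and integrable control of the off-diagonal remainder. The paper merely organizes the remainder slightly differently (first replacing $\gamma$ by its tangent line and freezing coefficients at $\bar s$, then splitting off $A(x-\gamma_0(s))-A((\bar s-s)\gamma'(\bar s))$), which is equivalent to your $N(x,s)$ decomposition once the harmless curvature corrections of order $|s-s_0|$ are accounted for.
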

\begin{proof}
 The estimates in \eqref{eq:norm.U_i}, \eqref{eq:norm.U_i.subcritical} {and \eqref{eq:norm.U_i-bis}} are  direct consequences of the pointwise estimates \eqref{eq:decay.U} {and} \eqref{eq:decay.nabla.U}.

	It remains to prove \eqref{eq:U_i.boundary}.
	We will assume that the filament is non-closed. The case of a closed filament is slightly easier.
     {Since the statement concerns only a single filament, we might  assume that $\mathcal C_i = \bar {\mathcal C_i}$ , and in particular $\mathcal C_i^\eps = \gamma_i([\eps,L_i - \eps])$.
Then,  we introduce the function $\bar s$  from $\partial   \mathcal S_i$ to $[0,L_i]$ 
 by the formula  $\gamma_i (\bar s (x) ) = \xi_i(x)$ for any $x$ in $\partial   \mathcal S_i$.}
 	In the rest of the proof, we will drop the index $i$.
 	
 	new{Fix $x \in \partial \mathcal S$. Note that $\bar s (x) \in [0,\eps] \cup [L - \eps , L]$ implies
 	$\dist(x, \partial \mathcal C^\eps) \leq  C \eps$. In this case,   \eqref{eq:U_i.boundary} follows immediately \eqref{eq:decay.U} applied to $\mathcal C^\eps$ and \eqref{eq:distance.cut-off.centerline}.
 	Therefore, we might assume in the following that $\bar s (x) \in (\eps,L - \eps)$.}
 
	Let us denote the straight line
	approximation at $\bar s = \bar s(x)$ by 
	$$\gamma_0(s) := \gamma(\bar s) + (s - \bar s) \gamma'(\bar s).$$ 
		We observe that for all $s \in [\eps,L - \eps]$ we have 
	\begin{gather}
	\label{ob1}
	|\gamma(s) - \gamma_0(s)| \leq C |s - \bar s|^2, \quad
	 |\gamma'(s) - \gamma'_0(s)| \leq C |s - \bar s|, \quad \\
	 	\label{ob2}
	|x-\gamma(s)| \geq c |s - \bar s| \quad \text{ and } \quad |x-\gamma_0(s)| \geq |s - \bar s|.
	\end{gather}

	Then, we split the integral
	\begin{align} 
\nonumber	U_{\mathcal C_i}[v](x) &= \frac12 \int_\eps^{L-\eps} \Big( S(x - \gamma(s)) k(\gamma'(s)) v(\gamma (s))  \dd s
	\label{indec}
		\\  &= \frac12 \big( U_1 (x) + U_2 (x)\big) ,
			\end{align}
		 where 
	\begin{align}
	U_1 (x)  &:= \int_\eps^{L-\eps} S(x - \gamma_0(s)) k(\gamma'(\bar s)) v(\gamma (\bar s)) \dd s, \\
 U_2 (x)&:= \int_\eps^{L-\eps} \Big( S(x - \gamma(s)) k(\gamma'(s)) v(\gamma (s)) - S(x - \gamma_0(s)) k(\gamma'(\bar s)) v(\gamma (\bar s))\Big) \dd s.
	\end{align}
	We decompose $U_1 (x)$ further by observing the following. By definition of $S$, 
	\begin{align}
		S(x - \gamma_0(s)) &= 
			{\frac1{8\pi}}\frac 1 {|x - \gamma_0(s)|} \Big(
	\Id +A(x - \gamma_0(s))\Big), 
	\end{align}
		where, for $p$ in $\R^3 \setminus \{0\}$, 
			$$A(p) := \frac p {|p|} \otimes \frac p {|p|}.$$
			Moreover, 
	\begin{align}
	\Id	 = {8\pi}	S_0	(\gamma'(\bar s)) -  \gamma'(\bar s) \otimes \gamma'(\bar s) = 	{8\pi}S_0	(\gamma'(\bar s)) - A((\bar s - s) \gamma'(\bar s)), 
	\end{align}
		where $S_0$ is defined in \eqref{notsu}, 
		so that
	\begin{align}
		S(x - \gamma_0(s)) =	{\frac1{8\pi}}\frac 1 {|x - \gamma_0(s)|} \Big(
	{8\pi}S_0	(\gamma'(\bar s))  +A(x - \gamma_0(s)) - A((\bar s - s) \gamma'(\bar s))\Big). 
	\end{align}
	This leads to 
	\begin{align}
	\label{redec}
	U_1 (x) = U_{1,a} (x) + U_{1,b} (x), 
		\end{align}	
	with
	\begin{gather*}
		U_{1,a} (x) := \Big(\int_\eps^{L-\eps} \frac 1 {|x - \gamma_0(s)|}  \dd s   \Big) S_0	(\gamma'(\bar s)) k(\gamma'(\bar s)) v(\gamma (\bar s)),\\
		U_{1,b} (x)  := 
%\\ \quad 	 \quad 
{\frac1{8\pi}}\Big(\int_\eps^{L-\eps}  \frac {A(x - \gamma_0(s)) - A((\bar s - s) \gamma'(\bar s)} {|x - \gamma_0(s)|} 
	 \dd s \Big)	k(\gamma'(\bar s)) v(\gamma (\bar s)).
	\end{gather*}	
	Then, thanks to the identity \eqref{eqqe}, to the fact that  $\gamma (\bar s)$ is 
  the orthogonal projection of $x$ on 
  $\mathcal C_i$, to the change of variables $z=(s-\bar s)\gamma'(\bar s)$ and to the fact that $\gamma$ is a parametrization by arc length, we obtain 
	\begin{align}
	\label{here}
		U_{1,a} (x) &=
		 \Big(\int_\eps^{L-\eps} \frac 1 {|x - \gamma_0(s)|}  \dd s   \Big) v(\gamma (\bar s))
		\\ \nonumber
		&= \Big(\int_{- \bar s + \eps}^{L - \bar s - \eps} \frac{1}{( |x - \gamma( \bar s)|^2 + z^2)^{1/2}} \dd z  \Big) v(\gamma (\bar s))
		\\  \nonumber
		 &= (\sinh^{-1}( |x - \gamma( \bar s)|^{-1} (L- \bar s - \eps)) - \sinh^{-1}(- |x - \gamma( \bar s - \eps)|^{-1} \bar s)) v(\gamma (\bar s)).
\end{align}	 
Using that $\sinh^{-1}(z) = \log(z + \sqrt{1 + z^2})$, for any real $z$,  and that $c \eps \leq |x - \gamma( \bar s)| \leq C \eps$,  elementary but tedious estimates show
that
\begin{align}
\label{ledeux}
	\big|U_{1,a} (x) -  2 |\log \eps | v(\gamma (\bar s))\big| \leq C |v(\gamma (\bar s))| { \left(1 +  |\log(\dist(x,\partial \mathcal C^\eps))|  \right)},
\end{align}
{where we used that $\partial \mathcal C^\eps = \{\gamma(\eps),\gamma(L - \eps)\}$.}

To estimate $U_{1,b} (x)$, we use
\begin{align}
	|A(p) - A(q)| \leq C \min\left\{ 1, |p-q| \max \left\{\frac 1 {|p|}, \frac 1 {|q|} \right\} \right\}.
\end{align} 
Thus, $	|U_{1,b} (x)| $ is bounded by 

\begin{gather*} 
 C {|v(\gamma (\bar s))|} \int_\eps^{L-\eps} \frac 1 {|x - \gamma_0(s)|}  \min\left\{ 1, |x - \gamma(\bar s)| \max \left\{\frac 1 {|x - \gamma_0(s)|}, \frac 1 {|s-\bar s|} \right\} \right\} \dd s ,
\end{gather*}
and therefore by 
\begin{gather*} 
	  C{|v(\gamma (\bar s))|} \Big( \int_{\bar s - \eps}^{\bar s + \eps} \frac 1 {(\eps^2 + (s - \bar s)^2)^{1/2}}  \dd s +   \int_{[\eps,L  -\eps] \setminus [\bar s - \eps, \bar s + \eps]} \frac \eps {(s - \bar s)^2} \dd s \Big),
	  \end{gather*}
	  so that finally 
	\begin{gather} 
	\label{betec}
|U_{1,b} (x)|	\leq C {|v(\gamma (\bar s))|} .
\end{gather}

	We now estimate $U_2 (x)$.
	Using that 
	\begin{align}
	|S(x_1) - S(x_2)| \leq C |x_1 - x_2| \max \left\{\frac 1 {|x_1|^2}, \frac 1 {|x_2|^2} \right\} ,
\end{align} 
we deduce that 
	\begin{align*}
		|S(x - \gamma(s)) k(\gamma'(s)) v(\gamma (s)) - S(x - \gamma_0(s)) k(\gamma'(\bar s)) v(\gamma (\bar s))|
		 \leq C{\|v\|_{W^{1,\infty} (\mathcal S_i)}}.
	\end{align*}
	Therefore, 
	\begin{align}
	\label{plubetec}
	|U_2 (x) |\leq C{\|v\|_{W^{1,\infty} (\mathcal S_i)}}.
	\end{align}
	Combining \eqref{indec}, \eqref{redec}, \eqref{ledeux}, \eqref{betec} and \eqref{plubetec}, we arrive at \eqref{eq:U_i.boundary}. 
\end{proof}

\subsection{Bogovski\u{i} and extension operators  with some uniformity with respect to the domain} \label{sec:H^1.estimates}
 
 A second ingredient of the proof of Corollary  \ref{prop:asymptotic.resistance} is the use of some 
	Bogovski\u{i} operators with some natural norms that are bounded uniformly with respect to the domain. 
	
   We will make use of  a statement regarding Bogovski\u{i} operators associated with John domains. Roughly speaking, an open bounded domain $\Omega$  is 
	a John domain with respect to a point $x_0$  if each point $y$ in $\Omega$ can be reached by a Lipschitz curve beginning at $x_0$ and contained in 
	$\Omega$ in such a way that, for every point $x$ in the curve, the distance from $x$ to $y$ is proportional to the distance from $x$ to the boundary of $\Omega$.
   This class strictly contains the Lipschitz domains. Notice nevertheless that  
   external cusps are not allowed.
   Let us {now} give the precise 
  definition of John domains following the definition of \cite{AcostaDuranMuschietti06}.
    
    \begin{Definition}
        Let $\Omega \subset \R^n$ be an open bounded domain. Then, $\Omega$ is called a John domain with constant $Z > 0$ if there exists $x_0 \in \Omega$ 
        such that for all $x \in \Omega$ there is an $Z$-Lipschitz map $\rho \colon [0,|x-x_0|] \to \Omega$ such that $\rho(0) = x$, $\rho(|x-x_0|) = x_0$ and  for all $t \in [0,|x-x_0|]$, 
        \begin{equation}
            \label{dist-bd}
            \dist(\rho(t),\partial \Omega) \geq t/Z.
        \end{equation}
    \end{Definition}
    
  The filaments $\mathcal S_i$ are Lipschitz domains and therefore any smooth neighborhood of $\mathcal S_i$ is  a John domain. In the next Lemma, we prove that suitable neighborhoods are John domains uniformly in $\eps$.

    \begin{Lemma} \label{nioul}
        Let $d>0$. Then, there exists $\eps_0 >0$ such that for all $\eps < \eps_0$, $B_d(\mathcal C_i) \setminus \mathcal S_i$ is a John domain with a constant $Z$ independent of $\eps$.
    \end{Lemma}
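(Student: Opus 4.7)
My plan is to fix a base point $x_0 \in \Omega := B_d(\mathcal C_i) \setminus \mathcal S_i$ at Euclidean distance $d/2$ from $\mathcal C_i$ (e.g.\ $x_0 = \gamma_i(L_i/2) + (d/2) n_0$ for a unit normal $n_0 \perp \gamma_i'(L_i/2)$) and then, for each $y \in \Omega$, construct a path to $x_0$ from up to three building blocks: an \emph{escape} step (only used when $y$ is in a ``notch'' of $\mathcal S_i$), a \emph{radial} step that pushes outward from the projection $\xi = \pi(y)$ on $\mathcal C_i$ until the radial distance reaches $d/2$, and a \emph{cylindrical} step at constant radial distance $d/2$ that slides along the centerline to $x_0$. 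After concatenating and affinely reparametrizing on $[0,|y-x_0|]$, I will verify the John condition stage by stage.

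First, some geometric preliminaries. By smoothness and injectivity of $\gamma_i$ together with compactness of $[0,L_i]$, there is $r_0 > 0$ (depending only on $\gamma_i,R_i$) such that the orthogonal projection $\pi \colon B_{r_0}(\mathcal C_i) \to \mathcal C_i$ is well defined and Lipschitz. Since $\Psi_i(s,0) = 0$ and $D_w \Psi_i(s,0)$ is invertible uniformly in $s$, the cross-section $\Psi_i(s,B_1(0))$ contains a planar disk of radius $c_0 > 0$ and is contained in one of radius $C_0 > 0$. For $\eps < \eps_0$ small, $\mathcal S_i$ is sandwiched between the $c_0 \eps$- and $C_0 \eps$-tubular neighborhoods of $\mathcal C_i$, and any $y \in \Omega$ with $|y - \xi| \geq 2C_0 \eps$ satisfies $\dist(y,\partial \mathcal S_i) \geq \tfrac12 |y-\xi|$. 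With this in hand the radial stage is transparent: along the outward ray from $\xi$, the distance to $\partial \mathcal S_i$ grows linearly with arc length while the distance to $\partial B_d(\mathcal C_i)$ stays $\geq d/2$. The cylindrical stage, at fixed radial distance $d/2$, stays in the safe annulus $\{d/3 < \dist(\cdot,\mathcal C_i) < 2d/3\}$, has total length $\leq C(L_i+d)$, and is uniformly far from $\partial \Omega$.

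The delicate piece is the escape step, for $y$ with $|y-\xi| < 2C_0 \eps$ sitting in a potentially non star-shaped notch of the cross-section, where $\dist(y,\partial \mathcal S_i)$ is genuinely of order $\eps$. In the plane through $\xi$ perpendicular to $\gamma_i'(\bar s)$, I will rescale by $1/\eps$: the problem becomes to escape from the exterior of the fixed smooth Jordan region $\Psi_i(\bar s, B_1(0))$ to a point at rescaled distance $3C_0$ from the origin, while keeping the distance to the boundary linear in the arc length. This is a planar statement about a bounded Lipschitz domain (the Jordan exterior intersected with a fixed disk), so a John-type estimate holds with a constant depending only on $\Psi_i(\bar s,\cdot)$ and, by compactness of $[0,L_i]$ and smoothness of $\Psi_i$, uniformly in $\bar s$. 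Rescaling back yields an escape path of length $O(\eps)$ in $\R^3$ with the desired estimate, using that the 3D distance to $\partial \mathcal S_i$ for points lying in the cross-section plane is comparable to the planar distance to the cross-section boundary, up to constants depending only on $\gamma_i$ and $\Psi_i$ (neighbouring cross-sections only contribute at 3D distance of order $|s-\bar s|$).

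Concatenating the three pieces gives a path of total arc length at most $C(d+L_i)$; the affine reparametrization on $[0,|y-x_0|]$ produces a $Z$-Lipschitz map with $Z$ bounded by $C(L_i+d)/|y-x_0|$, while in the regime $|y-x_0| \leq c d$ the point $y$ already lies in the safe annulus and a short straight segment suffices. The John inequality $\dist(\rho(t),\partial \Omega) \geq t/Z$ is then checked stage by stage using the bounds above, with a final constant $Z = Z(d,\gamma_i,R_i,\Psi_i)$ independent of $\eps$. The main obstacle is exactly the escape step through a general, possibly non star-shaped cross-section: without star-shapedness one cannot simply move outward radially, and the uniform John constant must be extracted from the compactness argument in $\bar s$ applied to the planar exterior problem. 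A secondary, easier technicality is the behaviour near the endpoints $\gamma_i(0), \gamma_i(L_i)$ of a non-closed filament where tubular coordinates degenerate; there $\Omega$ locally resembles a ball of radius comparable to $d$ with a region of diameter $O(\eps)$ removed, admitting a direct ad hoc John path with uniform constant.
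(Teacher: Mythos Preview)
Your three-stage construction (escape from the cross-section via a rescaled planar John argument, then push radially outward, then connect to $x_0$ in a region uniformly far from $\partial\Omega$) is exactly the paper's approach; the paper's curves $\rho_1,\rho_2,\rho_3$ correspond to your escape, radial and cylindrical steps, and the verification of the John inequality proceeds in the same way.

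One technical point to fix: your radial and cylindrical stages are run out to radius $d/2$, which tacitly assumes $d/2$ lies below the tubular-neighbourhood radius $r_0$ of $\mathcal C_i$. For large $d$ this fails: the normal ray from $\xi$ need not realise $\dist(\cdot,\mathcal C_i)$ beyond $r_0$, and the level set $\{\dist(\cdot,\mathcal C_i)=d/2\}$ is no longer a smooth tube along which you can ``slide''. The paper avoids this by stopping the radial step at an intermediate radius $c<\min(d,r_0)$ and then connecting $x_2$ to $x_0$ inside the $\eps$-independent Lipschitz domain $B_d(\mathcal C_i)\setminus B_c(\mathcal C_i)$; replacing your $d/2$ by such a $c$ repairs the argument. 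The paper also spells out the case analysis behind the Lipschitz-constant bound $T\le C|y-x_0|$ and gives an explicit modification of the escape step near the flat end-faces of a non-closed filament, both of which you sketch but would need to expand.
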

    \begin{proof}
       We first consider the case of a closed filaments. The necessary adaptations for non-closed filaments will be discussed at the end of the proof.
    
        Fix $c < d$ such that the  projection $\xi_i \colon B_{c}(\mathcal C_i) \to \mathcal C_i$  as in Lemma \ref{lem:dirac} is well defined. We will consider $\eps_0 $ in $(0, c)$.
    
        Choose any reference point $x_0 \in B_d(\mathcal C_i) \setminus  B_{c}(\mathcal C_i)$.
        To construct a curve from $x  \in B_d(\mathcal C_i) \setminus \mathcal S_i$ to $x_0$, we proceed in three steps{: we construct} three curves $\rho_1$, $\rho_2$ and $\rho_3$  that {once pasted} together connect $x$ to $x_0$. We parametrize the curves by arclength and reparametrize at the end to obtain a curve $\rho \colon [0,|y-x_0|] \to B_d(\mathcal C_i) \setminus \mathcal S_i$ connecting $x$ to $x_0$. In the following construction, the constants $\eta, C>0$ will be independent of $\eps$. The construction is visualized in Figure \ref{fig:John}.
    
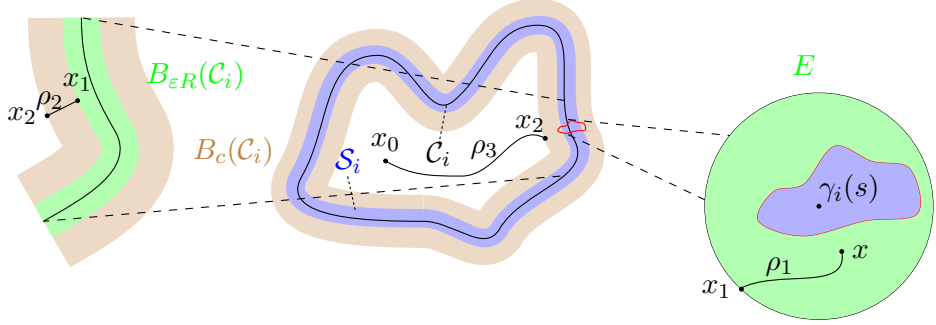
\begin{figure}

      \centering
        
\begin{tikzpicture}

\draw[line width=20pt, brown!30] (0,0.4) to[out=0, in=240] (1, 0.3) 
to[out=60, in=300] (1.97, 1.5)
to[out=120, in=0] (1.5, 3)
to[out=180, in=40] (0.4, 2)
to[out=220, in=0] (-0.6, 2.6) 
to[out=180, in=60] (-1.6, 1) 
to[out=-120, in=180] (0, 0.4) ;

\draw[line width=7pt, blue!30] (0,0.44) to[out=0, in=240] (1, 0.3) 
to[out=60, in=300] (2, 1.52)
to[out=120, in=0] (1.53, 3)
to[out=180, in=40] (0.44, 2.03)
to[out=220, in=0] (-0.63, 2.61) 
to[out=180, in=60] (-1.63, 1.02) 
to[out=-120, in=180] (0, 0.44) ;

\draw (0,0.4) to[out=0, in=240] (1, 0.3) 
to[out=60, in=300] (1.97, 1.5)
to[out=120, in=0] (1.5, 3)
to[out=180, in=40] (0.4, 2)
to[out=220, in=0] (-0.6, 2.6) 
to[out=180, in=60] (-1.6, 1) 
to[out=-120, in=180] (0, 0.4) ;

\begin{scope}[scale=2]
\draw[shift={(-4,-0.7)}, line width=40pt, brown!30] (1.5, 0.9) 
to[out=30, in=300] (1.97, 1.5)
to[out=120, in=-90] (1.75, 2.25)
;

\draw[shift={(-4,-0.7)},line width=14pt, green!30] (1.5, 0.9) 
to[out=30, in=300] (1.97, 1.5)
to[out=120, in=-90] (1.75, 2.25)
;

\draw[shift={(-4,-0.7)}] (1.5, 0.9) 
to[out=30, in=300] (1.97, 1.5)
to[out=120, in=-90] (1.75, 2.25)
;
\end{scope}

\begin{scope}[scale=0.17]
\draw[shift={(6,9)}, red] plot [smooth cycle, tension=0.6] coordinates {(4.4,0.4) (5,0.22) (5.8,0.4) (6.5,0.48)(6.4,0.97)  (5.9,1.07) (5.4,1.2) (5,0.8) (4.6,0.72) };
\end{scope}

\draw[dashed] (1.9,1.55)--(4.5,0.3);
\draw[dashed] (1.9,1.75)--(5.4,1.4);

\draw[dashed] (1.8,1)--(-5,0.4);
\draw[dashed] (1.86,2)--(-4.5,3.1);

\draw[dash pattern={on 1pt off 1pt}] (-0.9,0.55)--(-1,1);
\draw[dash pattern={on 1pt off 1pt}] (0.2,1.42)--(0.3,1.94);

\draw[red, name path=A, fill=blue!30] plot [smooth cycle, tension=0.6] coordinates {(4.4,0.4) (5,0.2) (5.8,0.4) (6.5,0.5)(6.4,1.1)  (5.9,1.2) (5.4,1.4) (5,0.9) (4.6,0.8) };
\draw[fill] (5.2,0.6) circle [radius=0.025];

\draw[name path=B] (5.2,0.6) circle [radius=1.5];
\tikzfillbetween[of=A and B]{green!30};

\node at (5.6,0.5) [anchor=south] {{$\gamma_i(s)$}};
\node at (0.2,1) [anchor=south] {{$\mathcal C_i$}};
\node at (-1.3,1.2) [anchor=west, blue] {{$\mathcal S_i$}};
\node at (-2.5,1) [anchor=south, brown] {{$B_c(\mathcal C_i)$}};
\node at (-3,2) [anchor=south, green] {{$B_{\varepsilon R}(\mathcal C_i)$}};
\node at (5,2.2) [anchor=south, green] {{$E$}};

\draw[fill] (5.5,0) circle [radius=0.025];
\node at (5.5,0) [anchor=west] {{$x$}};
\draw (5.5,0) to[out=-80, in=30] (4.15,-0.5);
\draw[fill] (4.18,-0.5) circle [radius=0.025];
\node at (4.18,-0.5) [anchor=east] {{$x_1$}};
\node at (5,-0.2) [anchor=east] {{$\rho_1$}};

\draw[fill] (-4.55,2) circle [radius=0.025];
\node at (-4.55,2) [anchor=south] {{$x_1$}};
\draw (-4.55,2) to (-4.95,1.8);
\draw[fill] (-4.95,1.8) circle [radius=0.025];
\node at (-4.95,1.8) [anchor=east] {{$x_2$}};
\node at (-4.6,2) [anchor=east] {{$\rho_2$}};

\draw[fill] (1.6,1.5) circle [radius=0.025];
\node at (1.4,1.45) [anchor=south] {{$x_2$}};
\draw (1.6,1.5) to[out=150,in=0 ] (0.5,1) to[out=180, in=-30] (-0.5,1.2);
\draw[fill] (-0.5,1.2) circle [radius=0.025];
\node at (-0.5,1.2) [anchor=south] {{$x_0$}};
\node at (0.8,1.1) [anchor=south] {{$\rho_3$}};

;\end{tikzpicture}

\caption{Illustration of the construction of the curve $\rho$ from $x$ to $x_0$ via $x_1$ and $x_2$} \label{fig:John} 

\end{figure}  

         Let $R := 2 \|\Psi_i\|_\infty$, where we recall from \eqref{def:S^eps} that $\Psi_i$ specifies the cross section of the reference filament. In particular, we have $\mathcal S_i \subset \overline{B_{\eps R/2}(\mathcal C_i)}$.
        
        First, if $x \in B_{\eps R}(\mathcal C_i)$, we construct a Lipschitz curve $\rho_1 \colon [0, T_1]$ from $x$ to $x_1 \in  \partial B_{\eps R}(\mathcal C_i)$ which satisfies $T_1 \leq C \eps R$ and $\dist(\rho_1(t),\partial \mathcal S_i) \geq \eta t$.
       If $x \not \in B_{\eps R}(\mathcal C_i)$, we set $x_1 = x$.         To construct $\rho_1$, we consider $\xi_i(x)$ the projection of $x$ on $\mathcal C_i$ and $\bar s$ such that $\gamma_i (\bar s) = \xi_i (x)$. Let $A$ be the plane through $\xi_i(x)$ perpendicular to $\gamma_i'(\bar s)$. Note that $x \in A$.
        Then, for $\eps$ sufficiently small, $E = A \cap (B_{\eps R}(\mathcal C_i) \setminus \mathcal S_i)$ is a smooth two dimensional domain, and $\hat E := \frac 1 \eps E$ is independent of $\eps$ and depends smoothly on $\xi_i(x)$. Therefore, we may construct $\hat \rho_1$ in $\hat E$ in order to obtain an appropriate curve $\rho_1$ by rescaling.

        Second, we construct a Lipschitz curve $\rho_2 \colon [0,T_2] \to  B_d(\mathcal 
        C_i) \setminus \mathcal S_i$ from $x_1$ to some $x_2 \in B_d(\mathcal C_i) \setminus B_c(\mathcal C_i)$ 
        such that $T_2 = (c - (\eps R + \dist(x_1, B_{\eps R}(\mathcal C_i)))_+$ and
        \begin{align} \label{eq:rho_2}
             \dist(\rho_2(t), \mathcal S_i) \geq    \dist(\rho_2(t), B_{\eps R}(\mathcal C_i)) + \frac 1 2 \eps R = t + \dist(x_1, B_{\eps R}(\mathcal C_i)) + \frac 1 2 \eps R.
        \end{align}
         To construct $\rho_2$, we just move along the gradient of $\dist(\cdot,\partial B_{\eps R}(\mathcal C_i))$. The gradient 
         of $\dist(\cdot,\partial B_{\eps R}(\mathcal C_i))$ coincides with the gradient of $\dist(\cdot,\mathcal C_i)$ outside of $B_{\eps R}(\mathcal C_i)$. The gradient is well defined through our choice of $c$, and \eqref{eq:rho_2} holds.

        Third, we construct a Lipschitz curve $\rho_3  \colon [0,T_3] \to  B_d(\mathcal C_i) \setminus B_c(\mathcal C_i)$ from $x_2$ to $x_0$ 
        such that $|T_3|\leq C|x_2 - x_0|$. 
        The existence of $\rho_3$ is straightforward since  $B_d(\mathcal C_i) \setminus B_c(\mathcal C_i)$ is a Lipschitz domain independent of $\eps$ which contains $x_0$.
        In particular observe that the part of the condition \eqref{dist-bd} which concerns the distance to the external boundary $\partial B_d(\mathcal C_i) $ of the domain 
        $B_d(\mathcal C_i) \setminus \mathcal S_i$ is clear.

      \medskip
        
        Now, we glue the three curves together and rescale to obtain $\rho \colon [0,|x-x_0|] \to B_d(\mathcal C_i) \setminus \mathcal S_i$ which has a Lipschitz constant
        \begin{align}
            Z_0 = \frac{T}{|x-x_0|} =: \frac{ T_1 + T_2 +T_3}{|x-x_0|}.
        \end{align}
        Moreover, $\rho$ satisfies
    \begin{align} \label{eq:rho}
            \dist(\rho(t),\partial \mathcal S_i) \geq \begin{cases}
            \frac{\eta t |x-x_0|}{T} &\quad  0 \leq t \leq \frac{T_1 |x-x_0|}{T} \\
            c\left(t - \frac{T_1 |x-x_0|}{T}\right) + \frac 1 2 \eps R
            &\quad  \frac{T_1 |x-x_0|}{T} \leq t \leq \frac{(T_1 + T_2) |x-x_0|}{T} \\
            c &\quad  \frac{(T_1 + T_2) |x-x_0|}{T} \leq t \leq  |x-x_0|.
            \end{cases}
        \end{align}
       Note that the additional constant $c$ in the second line above arises because the distance to $\partial \mathcal S_i$ is considered instead of the distance to $\mathcal C_i$.
        We claim that
        \begin{align} \label{eq:claim.T}
            T \leq C |x- x_0|.
        \end{align}
        which implies that $Z_0$ is bounded independently of $\eps$ and $x$.
        
        To prove the claim note that
        \begin{align}
            T \leq  C \eps R   \mathbf{1}_{x \in B_{\eps R}(\mathcal C_i)} +  (c - (\eps R + \dist(x_1, B_{\eps R}(\mathcal C_i)))_+ +  C|x_2 - x_0|,
        \end{align}
        where the notation $  \mathbf{1}$ is used for the indicator function of the set written as an index. 
        Consider first the case $x \in B_{\eps R}(\mathcal C_i)$. Then, for $\eps$ sufficiently small, $\eps R \leq c \leq 2 |x - x_0|$. Moreover, $x_1 \in  \partial B_{\eps R}(\mathcal C_i)$. 
        Thus
        \begin{align}
        (c - (\eps R + \dist(x_1, B_{\eps R}(\mathcal C_i)))_+ = |x_1 - x_2| \leq c \leq 2 |x - x_0|,
        \end{align}
        and finally
        \begin{align}
            |x_2 - x_0| \leq |x-x_0| + |x - x_1| - |x_1 - x_2| \leq |x- x_0| + 2 c  \leq C |x-x_0|
        \end{align}
        such that we conclude \eqref{eq:claim.T}.

        If $x \not \in  B_{\eps R}(\mathcal C_i)$ but $x \in B_c(\mathcal C_i)$, then we use that $x_2$ is the orthogonal projection of $x=x_1$ to $\partial B_c(\mathcal C_i)$. Thus
        \begin{align}
        T_2 = |x - x_2| = \dist(x,\partial B_c(\mathcal C_i)) \leq |x - x_0|
        \end{align}
        and we deduce again \eqref{eq:claim.T} by the triangle inequality.
        In the case  $x \not \in B_c(\mathcal C_i)$, the claim is also trivially satisfied since $T_1 = T_2 = 0$.

        It remains to verify that $ \dist(\rho(t),\partial \mathcal S_i) \geq t/Z$, for some $Z$ independent of $x$ and $\eps$.
        By \eqref{eq:rho}, we see that this is satisfied on the first and on the third part of the curve. Recalling $|T_1| \leq C \eps R$, the same holds for the second part of the curve. This finishes the proof.
        
        In the case of a non-closed filament, the proof works almost the same. The only necessary change is due to the fact that the plane $A$ as defined in the construction of $\rho_1$ above does not always contain $x$. Indeed $x \not \in A$ if the segment $[x,\xi(x)]$ is not perpendicular to $\gamma_i (\bar s)$ which can only happen if $\xi (x) \in \partial \mathcal C_i$. Fix such an $x \in B_c(\mathcal C_i)$ and let $P(x)$ be the projection of $x$ to $\partial \mathcal S_i$. Then, since the faces of $\partial S_i$, i.e. the surfaces $\{y \in \partial S_i : \xi(y) \in \partial \mathcal C_i\}$, are flat, the straight curve 
        \begin{align}
            \rho_{1,2}(t):= x + t \frac{x - P(x)}{|x - P(x)|}
        \text{  satisfies } 
            \dist(\rho_{1,2}(t), \partial \mathcal S_i) \geq t ,
        \end{align}
    for all $t \leq T_2$ which we again take to be the time where $\rho_{1,2}(t) \in \partial B_c(\mathcal C_i)$. From there, we can continue with the curve $\rho_3$ as above.
    \end{proof}

Now the statement that we will use is the following particular case of {\cite[Theorem 4.1]{AcostaDuranMuschietti06}}, where
$L^2_0$ denotes the space of $L^2$-functions with vanishing mean.
	\begin{thm}[{\cite[Theorem 4.1]{AcostaDuranMuschietti06}}] \label{John}
	 Let $\Omega \subset \R^3$ be a John domain with constant $Z$. Then there exists a bounded linear operator 
	 $\Bog \colon L^2_0(\Omega)  \to H^{1}_0(\Omega)$
	 such that for all $f \in L^2_0(\Omega)$
	 \begin{align}
	 	\div \Bog f = f,
	\end{align}	 
and the operator norm
	    $\|\Bog\|_{L^2_0(\Omega)  \to H^{1}_0(\Omega)}$ depends only on $Z$ and $\diam(\Omega)$.
	\end{thm}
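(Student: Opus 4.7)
The plan is to adapt the classical Bogovski\u{\i} construction from the star-shaped (or more generally Lipschitz) setting to John domains via a Whitney-type decomposition and a chain argument exploiting the John property. The starting point is that for a domain $B$ star-shaped with respect to a ball, one has an explicit integral formula (Bogovski\u{\i}'s formula)
\begin{equation*}
\Bog_B f(x) = \int_B N_B(x,y) f(y)\, dy,
\end{equation*}
which produces, for each $f \in L^2_0(B)$, an $H^1_0(B)$ vector field with divergence $f$ and operator norm controlled by the ratio $\diam(B)/r$, where $r$ is the radius of the ball of star-shapedness. This is the classical result found e.g.\ in Galdi, and the proof relies on rewriting the kernel $N_B$ as a sum of a Calder\'on--Zygmund operator and a lower-order term.

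The strategy on a general John domain $\Omega$ is to reduce to this local situation. First, I would take a Whitney decomposition $\Omega = \bigcup_k Q_k$ of $\Omega$ into dyadic cubes such that $\operatorname{diam}(Q_k)$ is comparable to $\operatorname{dist}(Q_k,\partial\Omega)$. Each dilated cube $Q_k^\ast$ (say $\frac{9}{8} Q_k$) is star-shaped with respect to a ball of radius comparable to its diameter, so the local operator $\Bog_{Q_k^\ast}$ is available with a \emph{uniform} bound on its operator norm. Fixing a reference cube $Q_0$ containing the John center $x_0$, the John condition provides, for every $Q_k$, a ``Boman chain'' $Q_k = Q_{k,0}, Q_{k,1}, \ldots, Q_{k,N_k} = Q_0$ of Whitney cubes with pairwise overlap of neighbors and with the diameters of the $Q_{k,j}$ comparable to their distance from $x_0$; such chains are extracted from the John curves $\rho$ by selecting the Whitney cube at each dyadic scale.

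Given such chains, I would decompose $f = \sum_k f_k$ with $\supp f_k \subset Q_k^\ast$ and $\int f_k = 0$. The decomposition is built by first setting $g_k := f \mathbf{1}_{Q_k}$, which has a nonzero mean $m_k = \int_{Q_k} f$; one redistributes $m_k$ backward along the chain from $Q_k$ to $Q_0$ by writing $m_k$ as a telescoping sum of ``transport'' functions supported in the overlaps $Q_{k,j} \cap Q_{k,j+1}$. After this rearrangement each $f_k$ has zero mean, and setting
\begin{equation*}
\Bog f := \sum_k \Bog_{Q_k^\ast} f_k \quad \text{(extended by zero)}
\end{equation*}
produces a vector field in $H^1_0(\Omega)$ with divergence $f$. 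The $H^1_0$ bound reduces to showing $\sum_k \|f_k\|_{L^2(Q_k^\ast)}^2 \lesssim \|f\|_{L^2(\Omega)}^2$, where the implicit constant depends only on $Z$ and $\operatorname{diam}(\Omega)$.

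The main obstacle, and the point where the John constant $Z$ enters quantitatively, is precisely this last $L^2$-bound: a Whitney cube $Q_j$ may appear in the chain of many different cubes $Q_k$, so the redistribution sum could in principle blow up. One controls this by a combinatorial ``shadow'' estimate, showing that the number of chains passing through a given cube $Q_j$ is bounded in terms of $Z$, and by a discrete Hardy-type inequality on the tree formed by the chains. Following the Acosta--Dur\'an--Muschietti approach, an alternative and cleaner route is dual: test $\Bog f$ against $\nabla \varphi$ and decompose $\varphi$ along the same Whitney/chain structure, reducing the estimate to a weighted Poincar\'e inequality on $\Omega$ that holds precisely under the John condition. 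Either way, the John property $\operatorname{dist}(\rho(t),\partial\Omega)\geq t/Z$ is what makes the geometric series over the chain converge with a constant depending only on $Z$ and $\operatorname{diam}(\Omega)$.
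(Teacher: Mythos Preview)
The paper does not prove this theorem: it is quoted verbatim as a particular case of \cite[Theorem~4.1]{AcostaDuranMuschietti06} and used as a black box, with no argument given. So there is no ``paper's own proof'' to compare against.

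That said, your sketch is a faithful outline of the argument in the cited reference. The ingredients you list --- Whitney decomposition, uniformly bounded local Bogovski\u{\i} operators on dilated cubes, Boman chains linking each Whitney cube to a central one via the John curves, redistribution of the cube means along the chains to produce a mean-zero decomposition $f=\sum_k f_k$, and the final overlap/shadow estimate controlled by the John constant --- are exactly the architecture of the Acosta--Dur\'an--Muschietti proof. One small correction: the shadow of a cube (the union of cubes whose chains pass through it) need not contain only boundedly many cubes; what is bounded is its \emph{measure} relative to the cube itself, and this is what feeds into the maximal-function or tree-Hardy argument that closes the $\ell^2$ estimate. With that caveat, your outline is correct.
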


	Relying on this result, we prove the following lemmas about extending functions defined on $\partial \mathcal S_j$.
	\begin{Lemma}
	\label{Claim} Let $d > 0$. Then, there exists $\eps_0(d) >0$ and $C(d)>0$ such that for all $\eps $ in $(0, \eps_0)$ the following holds. Let $\chi$ in $H^1({\partial} \mathcal S_j)$ satisfying 
	$$\int_{\partial S_j} \chi \cdot n \dd \mathcal{H}^2 = 0.$$
	Then, there exists a divergence-free function $\psi \in \dot H^1(\R^3 \setminus \mathcal S_j)$ such that $ \psi = \chi$ on $ \partial \mathcal S_j$,  $\supp \psi \subset B_d(\mathcal C_j)$ and 
	\begin{align}
		\|\psi\|^2_{\dot H^1 {(\R^3\setminus\mathcal S_j)}} \leq \frac C \eps \| \chi\|^2_{L^2(\partial \mathcal S_j)} + C \eps \|\nabla \chi\|^2_{L^2(\partial \mathcal S_j)}.
	\end{align}
	\end{Lemma}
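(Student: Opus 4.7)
The strategy is to first construct a (generally non divergence-free) extension $\tilde \psi$ of $\chi$ supported in a thin tube of width $\sim \eps$ around $\mathcal S_j$ and then correct its divergence by a Bogovski{\u i} operator on the John domain $B_d(\mathcal C_j) \setminus \mathcal S_j$, using the uniform-in-$\eps$ John property established in Lemma \ref{nioul} together with Theorem \ref{John}. The key point is that the scale $\eps$ is exactly the one that balances the two contributions in the desired estimate.

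More precisely, let $\eta_0>0$ be so small that the closest-point projection $P\colon \{x\in \R^3 \setminus \mathcal S_j : \dist(x,\mathcal S_j)<\eta_0 \eps\}\to \partial \mathcal S_j$ is well-defined and bilipschitz with constants independent of $\eps$. For closed filaments this follows from the tubular neighborhood theorem applied after rescaling the cross section by $\eps$; for non-closed ones with Lipschitz endpoints the same holds up to splitting a neighborhood of the two endcaps via a partition of unity and treating the flat endcaps separately (which is harmless since these regions have area $\sim \eps^2$). Let $\phi\in C_c^\infty([0,1))$ with $\phi(0)=1$ and set $t(x):=\dist(x,\mathcal S_j)$. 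The first extension is
\[
\tilde\psi(x) := \chi(P(x))\, \phi\!\left( \frac{t(x)}{\eta_0\eps}\right),
\]
extended by zero outside the tube. By the coarea formula applied in the tubular neighborhood, and using that $|\nabla P|$, $|\nabla t|$ are bounded uniformly in $\eps$ on this tube, one obtains
\[
\|\nabla \tilde\psi\|_{L^2(\R^3\setminus \mathcal S_j)}^2 + \|\dv \tilde\psi\|_{L^2(\R^3\setminus \mathcal S_j)}^2 \leq C\eps\|\nabla \chi\|_{L^2(\partial \mathcal S_j)}^2 + \frac{C}{\eps}\|\chi\|_{L^2(\partial \mathcal S_j)}^2,
\]
which is precisely the target estimate. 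The choice of thickness $\eta_0\eps$ is what yields this balance: going further out would blow up the $\eps\|\nabla \chi\|^2$ term, staying closer would blow up the $\eps^{-1}\|\chi\|^2$ term.

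Then the divergence theorem and the hypothesis $\int_{\partial \mathcal S_j} \chi\cdot n\,d\mathcal H^2 = 0$ give $\int_{B_d(\mathcal C_j)\setminus \mathcal S_j} \dv\tilde\psi = 0$. By Lemma \ref{nioul} the domain $B_d(\mathcal C_j)\setminus \mathcal S_j$ is John with a constant depending only on $d$, so Theorem \ref{John} provides $w:=\Bog(\dv \tilde\psi)\in H^1_0(B_d(\mathcal C_j)\setminus \mathcal S_j)$ with $\dv w = \dv\tilde\psi$ and $\|\nabla w\|_{L^2}\leq C(d)\|\dv\tilde\psi\|_{L^2}$. Setting $\psi:= \tilde\psi - w$ (extended by $0$ outside $B_d(\mathcal C_j)\setminus \mathcal S_j$) gives a divergence-free element of $\dot H^1(\R^3\setminus \mathcal S_j)$ with $\psi = \chi$ on $\partial \mathcal S_j$ (since $w$ vanishes there), $\supp \psi\subset B_d(\mathcal C_j)$, and the claimed bound.

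The main technical obstacle is controlling the first extension $\tilde\psi$ near the filament: the ambient curvature of $\partial \mathcal S_j$ is of order $1/\eps$, so one must work only inside a tube of width comparable to $\eps$, and one must check that the derivatives of $P$ and of $t$ remain bounded uniformly in $\eps$ there (this is clean after rescaling the cross section by $\eps$, since the rescaled filament is a smooth object of bounded geometry). For non-closed filaments one additionally has to handle the Lipschitz corners at the endpoints, which can be done by a partition of unity that isolates the two flat endcaps and replaces the closest-point projection there by projection onto a flat disk; because these regions have surface area $O(\eps^2)$ their contribution is dominated by the bulk term and does not spoil the estimate.
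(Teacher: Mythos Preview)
Your proposal is correct and follows essentially the same approach as the paper: extend $\chi$ into a tube of width $\sim\eps$ around $\mathcal S_j$ via the nearest-point projection composed with a cutoff, then correct the divergence using the Bogovski\u{i} operator on the John domain $B_d(\mathcal C_j)\setminus\mathcal S_j$ provided by Lemma~\ref{nioul} and Theorem~\ref{John}. The only minor difference is in the treatment of non-closed filaments, where the paper modifies the projection near the Lipschitz endcaps rather than using a partition of unity, but both routes work.
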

	\begin{proof}
We consider first the case of a closed filament $\mathcal S_j$. The necessary adaptations for a non-closed filament will be discussed at the end of the proof.
 Let $c>0$ and $T_\eps = B_{\eps c}(\mathcal S_j) \setminus \mathcal S_j$.
 		Denote by $P_\eps$ the projection from $T_\eps$ to $\partial\mathcal S_j$. Then, there exists $(c,\eps_0)$ (depending only on $\mathcal C_j$) such that for all $\eps < \eps_0$, $P_\eps$ is well-defined, smooth and $|\nabla P_\eps| \leq C$. By further reducing $\eps_0$ (depending on $d$), we ensure that $T_\eps \subset B_d(\mathcal C_j)$.
	 Let $\theta_\eps$ be a smooth cutoff function supported in $T_\eps$ such that $\theta_\eps = 1$ on $\partial \mathcal S_j$ and
	 $|\nabla \theta_\eps| \leq \frac C \eps$. 
	 
	Consider the function
	 \begin{align}
	 	\psi(x) = \theta_\eps(x) \chi(P_\eps (x)) - \Bog(\div(\theta_\eps(\cdot) \chi(P_\eps(\cdot)))(x),
	 \end{align}
	 where $\Bog$ denotes  suitable Bogovski\u{i} operators on $B_d(\mathcal C_j) \setminus \mathcal S_{j}$, provided by Theorem \ref{John}.
%	%
	Due to Lemma \ref{nioul} such operators $\Bog$ exist with an operator norm independent of $\eps$,  for $\eps$ sufficiently small.
	 We readily check the condition 
	 \begin{align}
	 	\int_{B_d(\mathcal C_j) \setminus \mathcal S_j} 
	 	\div(\theta_\eps(x) \chi(P_\eps(x)) = \int_{\partial\mathcal S_j} \chi \cdot n \dd \mathcal{H}^2 = 0.
	 \end{align}
		Therefore, we have
		\begin{align}
			\| \psi\|_{\dot H^1(\R^3 \setminus \mathcal S_j)}^2
			&\leq C \| \nabla (\theta_\eps \chi \circ P_\eps) \|_{L^2(\R^3 \setminus   \mathcal S_j)}^2  \\
			&\leq   C {\eps^{-2}\, }  \|\chi \circ P_\eps\|_{L^2(T_\eps)}^2 + C \|\nabla \chi \circ P_\eps\|_{L^2(T_\eps)}^2.
		\end{align}
		Finally, we observe that, by a change of coordinates and Fubini's principle, 
		\begin{align}
			\|\chi \circ P_\eps\|_{L^2(T_\eps)}^2 \leq \eps  \|\chi \|_{L^2(\partial\mathcal S_j)}^2,
		\end{align}
		and analogously for the gradient. This implies the result.
	 For the change of coordinates we used that for each $r \in (0, \eps c)$, $P_\eps$ is a diffeomorphism (uniformly in $r$ and $\eps$) from $\partial B_r(\mathcal S_j)$ to $\partial \mathcal S_j$ for $c, \eps_0$ sufficiently small. 
		
		This is not the case for a non-closed filament $\mathcal S_j$ which is only Lipschitz. Thus, we need to slightly modify the definition of $P_\eps$. To this end, we first define $\tilde P_\eps$ as the projection from $T_\eps$ to \begin{align}
		    \mathcal Z_j := \{ x \in \mathcal S_j \colon \dist(x,\partial \mathcal S_j) \geq c \eps \}.
		\end{align} 
		After possibly reducing $c$ and  $\eps_0$, $\mathcal Z_j$ satisfies an exterior sphere condition with $R \geq 4 c \eps$, which makes  this projection well-defined and also $P_\eps \colon T_\eps \to \partial \mathcal S_j$,
		\begin{align} \label{def:P_eps}
		    P_\eps(x) := \tilde P_\eps (x) + c \eps \frac{x - \tilde P_\eps (x)}{|x -\tilde P_\eps (x)|}.
		\end{align}
		Then, $P_\eps$ is again a diffeomorphism (uniformly in $r$ and $\eps$) from $\partial B_r(\mathcal S_j)$ to $\partial \mathcal S_j$. 
		Indeed, the exterior sphere condition yields for all $x,y \in T_\eps$
		\begin{align} \label{est:upper.proj.0}
		    |\tilde P_\eps(x) - \tilde P_\eps(y)| \leq 2 |x-y|.
		\end{align}
		Thus, for all $x,y \in \partial B_r(\mathcal S_j)$
		\begin{align*}
		 |P_\eps(x) - P_\eps(y)| \leq 2 |x-y| +  c \eps \frac{|x - y| +  |\tilde P_\eps (x) - \tilde P_\eps (y)|}{r + c \eps} \leq 5 |x-y|.
		\end{align*}

        It remains to check that 
        \begin{align} \label{est:lower.proj}
         |P_\eps(x) - P_\eps(y)| \geq c_0 |x-y|.
        \end{align}
        To this end, we distinguish two cases: in the first case, $|\tilde P_\eps(x) - \tilde P_\eps(y)| \geq |x - y|/8$. In this case \eqref{est:lower.proj} follows from \eqref{est:upper.proj.0} applied to $P_\eps(x), P_\eps(y)$ instead of $x,y$ and using that $\tilde P_\eps(P_\eps(x)) = \tilde P_\eps(x)$.
        In the opposite case, $|\tilde P_\eps(x) - \tilde P_\eps(y)| \leq |x - y|/8$
        Then, using $r \leq c \eps$, \eqref{def:P_eps}  implies
        \begin{align*}
             |P_\eps(x) - P_\eps(y)| \geq \frac {|x-y|}2 - 2|\tilde P_\eps(x) - \tilde P_\eps(y)| \geq \frac {|x-y|}4. 
        \end{align*}
	 \end{proof}

\subsection{Proof of Theorem \ref{lem:slender.several}} \label{zeveryproof}

 Let us first focus on  proving 
 \eqref{est:u-U_i}, observing that the estimate \eqref{est:u_H^1} will follow from \eqref{est:u-U_i}, \eqref{eq:norm.U_i} and  \eqref{eq:norm.U_i-bis}. 
 
 By Lemma \ref{lem:C^eps} it suffices to show  \eqref{est:u-U_i} with $ U_{\mathcal C_i}$ replaced by $ U_{\mathcal C_i^\eps}$, namely
 \begin{align} \label{est:u-U_i^eps}
 	\|u -  |\log \eps|^{-1} U_{\mathcal C_i^\eps}[v ]\|_{\dot H^1(\R^3 \setminus \mathcal S_i)}  
		\leq C | \log \eps |^{-1} \|v\|_{W^{1,\infty} ( \mathcal S_i)},
 \end{align}
 Let
 $$u_r := u - |\log \eps|^{-1} U_{{\mathcal C}_{i}^\eps} [v] .$$
Using \eqref{eq:norm.U_i-bis}  for inside $\mathcal S_j$, for $j\neq i$, and the definition of  $u$, we observe that, to prove  the estimate \eqref{est:u-U_i^eps}, 
it suffices to show that 
\begin{align}
\|u_r \|_{\dot H^1(\mathcal F)} \leq C | \log \eps |^{-1} \|v\|_{W^{1,\infty}{(\mathcal S_i)}}.
\end{align}
We apply Lemma \ref{Claim} with $\chi=u_r$, which satisfies the condition
	\begin{align}
\int_{\partial\mathcal S_j} u_r \cdot n \dd \mathcal{H}^2 =		\int_{ \mathcal S_j} \div u_r  = 0, 
	\end{align}
for $1 \leq j \leq N$. 

By \eqref{eq:U_i.boundary}, since
$$\int_{\partial\mathcal S_i} |\log(\dist(x,\partial \mathcal C_i^\eps))|^2 \dd \mathcal H^2 \leq  C \eps, $$
we have
	\begin{align}
		\| u_r \|^2_{L^2(\partial \mathcal S_i)} &\leq 
		 C \eps  |\log \eps|^{-2}\|v\|_{W^{1,\infty}{(\mathcal S_i)}}^2.
\end{align}
	Moreover from the pointwise estimate \eqref{eq:decay.nabla.U} applied to $U_{\mathcal C_i^\eps}$ and recalling \eqref{eq:distance.cut-off.centerline},  we find 
	\begin{align}
		 \|\nabla_\tau u_r\|^2_{L^2(\partial \mathcal S_i)} &\leq 
		\frac   C  \eps |\log \eps|^{-2}\|v\|_{W^{1,\infty}{(\mathcal S_i)}}^2,
	\end{align}
	where $\nabla_\tau$ denotes the tangential part of the gradient.
	Finally, for $j \neq i$, we observe that \eqref{eq:norm.U_i-bis} implies 
	\begin{align}
		\| u_r \|^2_{L^2(\partial \mathcal S_j)} &\leq 
		 C \eps  |\log \eps|^{-2}\|v\|_{W^{1,\infty}{(\mathcal S_i)}}^2, \\
		 		 \|\nabla_\tau u_r\|^2_{L^2(\partial \mathcal S_i)} &\leq 
	  C  \eps |\log \eps|^{-2}\|v\|_{W^{1,\infty}{(\mathcal S_j)}}^2,
\end{align}	
for some constant depending on $d$.
	
	Thus, Lemma \ref{Claim} yields a divergence-free function $\tilde u_r = \sum \psi_i \in H^1(\mathcal F)$ with $\tilde u_r = u_r$ on $\partial \mathcal F$ and 
	\begin{align}
		\|\tilde u_r \|_{\dot H^1(\mathcal F)} \leq  C |\log \eps |^{-1} \|v\|_{W^{1,\infty}}.
	\end{align}
	We conclude by recalling that $u_r$ solves the Stokes equations in $\mathcal F$. Thus, it minimizes the $\dot H^1$-norm among all divergence-free functions which satisfy the same boundary conditions according to the  Helmholtz minimum dissipation theorem which we now recall.

\begin{thm}
\label{Helmholtz}
If  $u $ in $\dot H^1({\mathcal F})$ satisfies
\begin{align*}
		- \Delta u + \nabla p = 0 , \quad \text{ and }\quad \div u = 0 \quad \text{in } \mathcal F ,
\end{align*}
and $\tilde u $ in $\dot H^1({\mathcal F})$ satisfies
\begin{align*}
	\begin{aligned}
		 \div \tilde u = 0 \quad \text{in } \mathcal F \quad \text{ and }\quad
		\tilde u = u  \quad \text{ on }  \partial  \mathcal F  ,
	\end{aligned}
\end{align*}
then
\begin{align*}
	\|u\|_{\dot H^1({\mathcal F})}  
		\leq \|\tilde u\|_{\dot H^1({\mathcal F})} .
\end{align*}
\end{thm}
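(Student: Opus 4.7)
The plan is to carry out the standard orthogonality/expansion argument: set $w := \tilde u - u$, expand $\|\tilde u\|_{\dot H^1(\mathcal F)}^2 = \|u\|_{\dot H^1(\mathcal F)}^2 + 2 \int_{\mathcal F} \nabla u : \nabla w + \|w\|_{\dot H^1(\mathcal F)}^2$, and show that the cross term vanishes. The positivity of $\|w\|_{\dot H^1(\mathcal F)}^2$ then yields the conclusion.

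By construction, $w$ lies in $\dot H^1(\mathcal F)$, is divergence-free in $\mathcal F$, and vanishes on $\partial \mathcal F$ (the finite part of the boundary). To handle the cross term, I would first work formally: testing the Stokes equation $-\Delta u + \nabla p = 0$ against $w$ and integrating by parts,
\begin{equation*}
    \int_{\mathcal F} \nabla u : \nabla w \,dx = -\int_{\mathcal F} \Delta u \cdot w \,dx + \text{boundary terms} = -\int_{\mathcal F}\nabla p \cdot w \,dx + \text{boundary terms}= \text{boundary terms},
\end{equation*}
where the last equality uses $\div w = 0$ in $\mathcal F$. The finite boundary contributions on $\partial \mathcal F$ vanish because $w|_{\partial \mathcal F} = 0$, and the contribution from infinity vanishes thanks to the $1/|x|$-type decay built into $\dot H^1(\R^3)$ for the Stokes problem.

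To make this rigorous, I would approximate $w$ by a sequence $(w_n)$ of smooth, compactly supported, divergence-free vector fields on $\mathcal F$ that vanish in a neighborhood of $\partial \mathcal F$, converging to $w$ in $\dot H^1(\mathcal F)$. For each $w_n$ the identity $\int_{\mathcal F} \nabla u : \nabla w_n = 0$ follows from a clean integration by parts against the Stokes equation (no boundary terms at all), and passing to the limit using $\nabla w_n \to \nabla w$ in $L^2(\mathcal F)$ together with $\nabla u \in L^2(\mathcal F)$ gives $\int_{\mathcal F} \nabla u : \nabla w = 0$.

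The main technical point is the density of such compactly supported, divergence-free test fields in the closed subspace of $\dot H^1(\mathcal F)$ consisting of divergence-free fields vanishing on $\partial \mathcal F$. For the exterior domain $\mathcal F = \R^3 \setminus \cup_i \mathcal S_i$ with Lipschitz boundary, this is classical: one first truncates with a standard cutoff $\chi_R$ supported in $B_{2R}$, controls the commutator $\nabla u \cdot \nabla \chi_R$ using the decay of $\dot H^1$-functions in $\R^3$ (via Hardy's inequality $\|u/|x|\|_{L^2} \lesssim \|\nabla u\|_{L^2}$), then restores the divergence-free condition by subtracting a Bogovski\u{i} correction on the annulus $B_{2R}\setminus B_R$ whose norm tends to zero as $R \to \infty$, and finally mollifies. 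This density step is where the only real work lies; the rest of the argument is a one-line expansion.
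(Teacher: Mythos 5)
Your proposal is correct and follows essentially the same route as the paper: the paper tests the weak formulation of the Stokes problem with $\tilde u - u$ to get $(\tilde u - u, u)_{\dot H^1(\mathcal F)} = 0$ and then concludes by the Pythagorean expansion, exactly your orthogonality argument. The density/approximation discussion you add is simply the justification implicitly packaged in the phrase ``weak formulation'' and is a legitimate (if more detailed) way to make the same one-line proof rigorous.
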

\begin{proof}
Using $\tilde u - u$ as a test function in the weak formulation of the PDE for $u$, we find $(\tilde u - u,u)_{\dot H^1({\mathcal F})} =0$
and thus $\|\tilde u\|^2_{\dot H^1({\mathcal F})} = \|\tilde u - u\|^2_{\dot H^1({\mathcal F})} + \| u\|^2_{\dot H^1({\mathcal F})}$.
\end{proof}

For the proof of Theorem \ref{lem:slender.several}, it remains to show \eqref{est:u-U_i.L^p}. This estimate follows directly from \eqref{est:u-U_i}, \eqref{eq:norm.U_i.subcritical} and the following lemma.

	\begin{Lemma}\label{lem:improved}
For all $d> 0$ and $p < 3/2$  there exists $\eps_0(d)>0$ and $C_p(d)>0$ such that for all $\eps < \eps_0$ and all $\dmin \geq d$ the following holds. Let $q \in [p,\infty]$ and let $v \in \dot H^1(\mathcal F) \cap W^{1,q}_{\loc}(\R^3)$ be divergence-free and solve the homogeneous Stokes equations in $\mathcal F$, that is 
\begin{align*}
		- \Delta v + \nabla p = 0 \quad \text{ and } \quad  \div v = 0 \quad \text{in } \mathcal F .
\end{align*}
Then,
\begin{align}
    \|v\|_{W^{1,p}_\loc}  \leq C_p |\log \eps|^{-1/2} \|\nabla v\|_{L^2(\mathcal F)} + \eps^{\frac 2 {q'} - \frac{2}{p'}} \|\nabla v\|_{L^q(\cup_i \S_i)}.
\end{align}
\end{Lemma}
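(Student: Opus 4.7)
My plan is to split $K$ into the interior region $K \cap \cup_i \S_i$ and the exterior region $K \cap \mathcal F$, and estimate each piece separately. The interior piece is the source of the second term on the right, and the exterior piece yields the $|\log \eps|^{-1/2}\|\nabla v\|_{L^2(\mathcal F)}$ factor.

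For the interior contribution, since $|K \cap \cup_i \S_i| \leq C\eps^2$, Hölder's inequality directly gives
\begin{align*}
\|\nabla v\|_{L^p(K \cap \cup_i \S_i)} \leq |K \cap \cup_i \S_i|^{1/p - 1/q}\|\nabla v\|_{L^q(\cup_i \S_i)} \leq C\eps^{2/q' - 2/p'}\|\nabla v\|_{L^q(\cup_i \S_i)}.
\end{align*}
The $L^p$-norm of $v$ itself on $\cup_i \S_i$ is handled analogously: by a Poincaré argument on each $\S_i$, $\|v\|_{L^q(\S_i)}$ reduces to $\|\nabla v\|_{L^q(\S_i)}$ plus the trace of $v$ on $\partial \S_i$, and the latter is controlled by $\|\nabla v\|_{L^2(\mathcal F)}$ via a trace inequality on the thin exterior domain.

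For the exterior contribution, I would use duality. Fix $\phi \in C_c^\infty(K;\R^3)$ with $\|\phi\|_{L^{p'}} \leq 1$, and let $(\Phi,\Pi)$ be the decaying Stokes flow in $\R^3$ driven by $\phi$. Since $p < 3/2$ forces $p' > 3$, Calder\'on--Zygmund theory combined with Morrey's embedding yields $\|\nabla \Phi\|_{L^\infty(\R^3)} + \|\nabla \Phi\|_{L^{p'}(\R^3)} \leq C_K \|\phi\|_{L^{p'}}$. Since $v$ is divergence-free and globally in $W^{1,q}_\loc(\R^3)$, integration by parts yields
\begin{align*}
\int v\cdot \phi = \int_{\R^3}\nabla v : \nabla \Phi = \int_{\mathcal F}\nabla v : \nabla \Phi + \int_{\cup_i \S_i}\nabla v : \nabla \Phi.
\end{align*}
The integral over $\cup_i \S_i$ is bounded via Hölder and interpolation by $C \eps^{2/q'-2/p'}\|\nabla v\|_{L^q(\cup_i \S_i)}\|\phi\|_{L^{p'}}$. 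For the integral over $\mathcal F$, I use the Stokes equation for $v$ to integrate by parts again, obtaining $\sum_i \int_{\partial \S_i}\Sigma(v,p) n \cdot \Phi$, and replace $\Phi|_{\partial \S_i}$ by $\Phi(x_i^*)$ at a base point $x_i^* \in \mathcal C_i$, producing $\sum_i F_i \cdot \Phi(x_i^*)$ plus a boundary remainder, where $F_i := \int_{\partial \S_i}\Sigma(v,p) n$ is the total force on $\S_i$.

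The $|\log \eps|^{-1/2}$ factor arises from the key bound $|F_i| \leq C |\log \eps|^{-1/2}\|\nabla v\|_{L^2(\mathcal F)}$. To prove it, given a unit vector $V \in \R^3$, let $W$ be the Stokes solution in $\mathcal F$ equal to the rigid motion $V$ on $\partial \S_i$ and zero on $\partial \S_j$, $j\neq i$. Theorem \ref{lem:slender.several} gives $\|\nabla W\|_{L^2(\mathcal F)} \leq C |\log \eps|^{-1/2}$, and Helmholtz's minimum dissipation principle together with Cauchy--Schwarz yields
\begin{align*}
F_i \cdot V = 2\int_{\mathcal F}D(v):D(W) \leq C\|\nabla v\|_{L^2(\mathcal F)}\|\nabla W\|_{L^2(\mathcal F)} \leq C |\log \eps|^{-1/2}\|\nabla v\|_{L^2(\mathcal F)}.
\end{align*}
The analogous bound for the torque follows from using rotational boundary data. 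The parallel bound on $\|\nabla v\|_{L^p(K\cap \mathcal F)}$ is obtained by dualizing with tensor-valued test functions $\psi$, where the associated dual Stokes problem is driven by $-\div \psi$ and the same force-torque estimates control the resulting boundary integrals.

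The main obstacle is controlling the remainder $\sum_i \int_{\partial \S_i}\Sigma(v,p) n \cdot (\Phi - \Phi(x_i^*))$ from replacing $\Phi$ by a constant on each filament: the integrated traction $\|\Sigma(v,p) n\|_{L^1(\partial \S_i)}$ is not directly dominated by $\|\nabla v\|_{L^2(\mathcal F)}$. This forces one to pass to a first-order dipole expansion of $\Phi$ around $x_i^*$ and match the torque and stresslet moments of $\Sigma(v,p) n$, each of which is controlled by the same Stokes-resistance argument as $F_i$ above through Theorem \ref{lem:slender.several} with affine boundary data. An alternative route is to directly compare $v|_{\mathcal F}$ with its own slender-body representation and thereby reduce to the already-proven estimates \eqref{est:u_H^1}--\eqref{est:u-U_i.L^p}.
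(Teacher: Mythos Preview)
Your duality framework and the handling of the interior term $K \cap \cup_i \S_i$ match the paper. The divergence is at the boundary integral $\sum_i \int_{\partial \S_i}\Sigma(v,p) n \cdot \Phi$, and neither of your proposed fixes closes the gap.

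The Taylor/dipole expansion around a point $x_i^\ast \in \mathcal C_i$ cannot work here because the filaments have length of order $1$, not $\eps$: on $\partial \S_i$ one has $|x - x_i^\ast| = O(1)$, so even after subtracting force, torque and stresslet moments the remainder $\Phi(x) - \Phi(x_i^\ast) - \nabla\Phi(x_i^\ast)(x - x_i^\ast)$ stays $O(1)$, while $\|\Sigma(v,p) n\|_{L^1(\partial \S_i)}$ carries no $|\log\eps|^{-1/2}$ smallness. Your alternative route through \eqref{est:u-U_i.L^p} is circular: in the paper, Lemma~\ref{lem:improved} is precisely the ingredient used to \emph{prove} \eqref{est:u-U_i.L^p} from \eqref{est:u-U_i}.

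The idea you are missing is that the boundary integral can be rewritten, for \emph{any} divergence-free $\varphi \in \dot H^1(\R^3)$ with $\varphi = \Phi$ in $\cup_i \S_i$, as
\[
\sum_i \int_{\partial \S_i}\Sigma(v,p)\,n\cdot \Phi \;=\; 2\int_{\mathcal F} D\varphi : Dv \;\leq\; 2\,\|\nabla v\|_{L^2(\mathcal F)}\,\|\nabla\varphi\|_{L^2(\mathcal F)},
\]
and then $\varphi$ is chosen via a \emph{logarithmic} cutoff $\theta_\eps(x) = (\log\dist(x,\cup_i\mathcal C_i) - \log d)/(\log(\eps R) - \log d)$ together with a Bogovski\u{\i} correction, i.e.\ $\varphi = \theta_\eps \Phi - \Bog(\nabla\theta_\eps\cdot\Phi)$. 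The point is that $\|\nabla\theta_\eps\|_{L^2}^2 \leq C|\log\eps|^{-1}$ (the $2$-dimensional capacity scaling transverse to the filament) and $\|\theta_\eps\|_{L^r} \leq C_r|\log\eps|^{-1}$ for $r<\infty$, which combined with the bounds $\|\Phi\|_{L^\infty} + \|\nabla\Phi\|_{L^{p'}} \leq C_{K,p}$ (available since $p' > 3$) give $\|\nabla\varphi\|_{L^2} \leq C|\log\eps|^{-1/2}$. This is where the factor $|\log\eps|^{-1/2}$ actually originates; your force-and-torque route extracts it only for the zeroth moment, not for the full boundary integral.
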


\begin{proof}

Let $K$ be compact, $g \in L^{p'}$, $\supp g \subset K$ and let $w$ solve 
\begin{align*}
- \Delta w + \nabla \pi = \dv g	 \quad \text{ and } \quad  \div w = 0 \quad \text{in } \R^3 .
\end{align*}
Then, by standard regularity theory and Sobolev embedding with $1/{p'} = 1/r  - 1/3$
\begin{align}
    \|\nabla w\|_{L^{p'}(\R^3)} + \|w\|_{L^\infty(\R^3)} \leq  C_p (\|\nabla w\|_{L^{p'}(\R^3)} + \|\nabla w\|_{L^{r}(\R^3)}) \leq   C_{K,p}\|g\|_{L^{p'}(\R^3)},
\end{align}
where  we used in the last step that $g \in L^r(\R^3)$ due to its compact support.
Thus the desired estimate for $\|\nabla v\|_{L^p(K)}$ will follow by duality once we have shown
\begin{align} \label{est:nabla.v.g}
&\int_{\R^3} \nabla v \cdot g 
= 2 \int_{\R^3}  D w : D v \\
&\leq C_p \left(|\log \eps|^{-1/2} \|\nabla v\|_{L^2(\mathcal F)} + {\eps^{\frac 2 {q'} - \frac{2}{p'}} \|\nabla v\|_{L^q(\cup_i \S_i)}}\right) 
\left(\|\nabla w\|_{L^{p'}(\R^3)} + \|w\|_{L^\infty(\R^3)} \right).
\end{align}
The estimate for $\|v\|_{L^p(K)}$ follows along the same lines by considering the problem $- \Delta w' + \nabla \pi' = g$ and the fact that 
\begin{align*} 
\int v \cdot g =  2 \int_{\R^3}  D w' : D v . 
\end{align*}
Note that the regularity of $w'$ is even better than the regularity of $w$.

To show \eqref{est:nabla.v.g}, we split the left hand side into
\begin{align}
         2 \int_{\R^3}  D w : D v  = 2 \int_{\cup \mathcal S_i}  D w : D v + 2 \int_{\mathcal F} D w : D v =:I_1 + I_2
\end{align}
By H\"older's inequality, we estimate 
\begin{align}
    I_1 \leq C   {\eps^{\frac 2 {q'} - \frac{2}{p'}} \|\nabla v\|_{L^q(\cup_i \S_i)}} \|\nabla w\|_{L^{p'}},
\end{align}
where we used $|\cup \mathcal S_i| \leq C \eps^2$.
Moreover, by some integrations by parts, we have that 
\begin{align}
            I_2 = \sum_{1 \leq i \leq N} \int_{\partial \mathcal S_i} \Sigma(v,p) n \cdot w =  2 \int_{\R^3 \setminus \mathcal F} D \varphi : D v
    \leq  2 \|v\|_{\dot H^1(\mathcal F)} \|\varphi\|_{\dot H^1(\mathcal F)}
\end{align}
for all divergence-free functions $\varphi \in \dot H^1(\R^3)$ with $\varphi = w$ in $S_i$.

Therefore, it remains to show that such a function $\varphi$ exists which satisfies
\begin{align} \label{est:varphi}
    \|\varphi\|_{\dot H^1} \leq C |\log \eps|^{-1/2} \left(\|\nabla w\|_{L^{p'}(\R^3)} + \|w\|_{L^\infty(\R^3)} \right).
\end{align}
The construction of such a function $\varphi$ is similar to the construction in the proof of Lemma \ref{Claim}. However, since we have better control of the function  which we want to extend, 
we will see that using a different cut-off function yields better estimates than in  Lemma \ref{Claim}.
More precisely, we consider $R\geq 0$ (independent of $\eps$)  and $\eps_0$ sufficiently small, such that $\mathcal S_i \subset B_{\eps R}(\mathcal C_i) \subset B_d(\mathcal C_i)$ for all $1 \leq i \leq N$. 
Then, we define
\begin{align}
    \theta_\eps (x) := \begin{cases}
                \frac{\log(\dist(x, \cup_i \mathcal C_i)) - \log d}{\log (\eps R) - \log d} & \qquad \text{in } \cup_i \left(B_d( \mathcal C_i) \setminus B_{\eps R}(\mathcal C_i) \right) \\
                1 & \qquad \text{in } \cup_i  B_{\eps R}(\mathcal C_i) \\
               0 & \qquad \text{in } \R^3 \setminus \left(\cup_i  B_d( \mathcal C) \right).
                \end{cases}
\end{align}
Note that this cut-off function corresponds to the $2$-dimensional capacitary function of a ball $B_\eps(x)$ within $B_d(x)$.

Now we define
\begin{align}
    \varphi := \theta_\eps w - \Bog(\nabla \theta_\eps \cdot w).
\end{align}
Note that $\supp \nabla \theta_\eps \subset \cup B_d(\mathcal C_i) \setminus B_{\eps R}(\mathcal C_i)$. We may apply Lemma \ref{nioul} to this domain to estimate the Bogovski\u{i} operator since $B_{\eps R}(\mathcal C_i)$ just corresponds to a filament with centerline $\mathcal C_i$ and circular cross section.

Thus, we can estimate
\begin{align} \label{est:varphi.1}
 \| \varphi\|_{\dot H^1} &\leq \|w \nabla \theta_\eps\|_{L^2(\R^3)} + \|\theta_\eps \nabla w\|_{L^2(\R^3)} \\ \nonumber
 &\leq \|w\|_{L^\infty(\R^3)} \| \nabla \theta_\eps\|_{L^2(\R^3)} + \|\theta_\eps\|_{L^r(\R^3)} \|\nabla w\|_{L^{p'}(\R^3)} ,
\end{align}
where $\frac 1 r + \frac 1 {p'} = \frac 1 2$.
To conclude, we assume  that $\eps_0$ is chosen small enough such that $\log d - \log (\eps R) \geq \frac 1 2  {|\log \eps|}$. Then,  
\begin{align} \label{est:nabla.theta}
    \| \nabla \theta_\eps\|^2_{L^2(\R^3)} \leq C |\log \eps|^{-2} \sum_{1 \leq i \leq N} \int_{B_d( \mathcal C_i) \setminus B_{\eps R}(\mathcal C_i)} \frac 1 {\dist^2(x,\mathcal C_i)}  \dd x\leq C |\log \eps|^{-1} ,
\end{align}
and for all $r < \infty$,
\begin{align} \label{est:theta}
    \|\theta_\eps\|^r_{L^r(\R^3)} \leq C |\log \eps|^{-r} \sum_{1 \leq i \leq N} \int_{B_d( \mathcal C_i)}  |\log(\dist(x,\mathcal C_i))|^r +   |\log d|^r \dd x \leq C_r  |\log \eps|^{-r} .
\end{align}
Inserting \eqref{est:nabla.theta} and \eqref{est:theta} in \eqref{est:varphi.1} yields \eqref{est:varphi}.
This concludes the proof.
\end{proof}

\subsection{Proof of Corollary  \ref{prop:asymptotic.resistance}}
\label{sec-prop:asymptotic.resistance}
This subsection is devoted to the proof of Corollary  \ref{prop:asymptotic.resistance}. 
Let us therefore consider a vector field $v $ in $W^{1,\infty}(\cup_{j=1}^N \mathcal S_j)$ and divergence-free. 
	Our aim is to establish the inequality \eqref{eq:Faxen.force}
	 regarding the approximation of
	 \begin{align}
	 \int_{\cup_{j=1}^N \partial \mathcal S_j}  (\Sigma (V_{i,\alpha}, P_{i,\alpha}) n) \cdot v \, d\mathcal{H}^2  \quad \text{by} \quad 
 |\log \eps|^{-1} 
	   I_{\mathcal{C}_i }  [v_{i,\alpha}, v   ],	\end{align}
where the functions $(V_{i,\alpha},P_{i,\alpha})$ are defined in \eqref{sti}.
	Let 
		\begin{align}
		u_{i,\alpha} :=  |\log \eps|^{-1} U_{\mathcal C_i}[v_{i,\alpha}] \quad \text{ and } \quad 
			p_{i,\alpha} :=  |\log \eps|^{-1} P_{\mathcal C_i}[v_{i,\alpha}],
		\end{align}
	where  $U_{\mathcal C_i}$ and $P_{\mathcal C_i}$ are the operators respectively  defined in \eqref{def:U} and \eqref{def:P}. By \eqref{singm}, 
		\begin{align}
		\label{dela}
	-\Delta u_{i,\alpha} + \nabla p_{i,\alpha} = |\log \eps|^{-1} \mu_{\mathcal C_i}[v_{i,\alpha}], \quad  \div  u_{i,\alpha} = 0, 
		\end{align}
			in the sense of distributions in $\R^3$. 
		Let us decompose 
	 \begin{align}
	 \label{decompos}
	 &\int_{\cup_{j=1}^N \partial \mathcal S_j}  (\Sigma (V_{i,\alpha}, P_{i,\alpha}) n) \cdot v \, d\mathcal{H}^2 -  |\log \eps|^{-1} 
	   I_{\mathcal{C}_i }  [v_{i,\alpha}, v   ]
	    \\ 
	   \nonumber &= \int_{\partial \mathcal S_{i}} \Sigma(u_{i,\alpha},p_{i,\alpha}) n \cdot v \, d\mathcal{H}^2
	   -  |\log \eps|^{-1} I_{\mathcal{C}_i } [ v_{i,\alpha},v ] 
	   \\ \nonumber &\quad +  \int_{\cup_{j=1}^N \partial \mathcal S_j}  (\Sigma (V_{i,\alpha}, P_{i,\alpha}) n) \cdot v \, d\mathcal{H}^2 - \int_{\partial \mathcal S_{i}} \Sigma(u_{i,\alpha},p_{i,\alpha}) n \cdot v \, d\mathcal{H}^2.
	\end{align}

By an integration by parts inside the filament $\mathcal S_{i}$ and recalling the definition \eqref{lau}, we deduce from 	\eqref{dela} that 
	\begin{align} \nonumber
	\int_{\partial \mathcal S_{i}} \Sigma(u_{i,\alpha},p_{i,\alpha}) n \cdot v \, d\mathcal{H}^2
	&= \int_{\mathcal S_{i}}  (-\Delta u_{i,\alpha} + \nabla p_{i,\alpha}) \cdot v + \int_{\mathcal S_{i}} \Sigma(u_{i,\alpha},p_{i,\alpha}) : D(v) \\
	\label{24}
	& =  |\log \eps|^{-1} I_{\mathcal{C}_i } [ v_{i,\alpha},v ]
	+ \int_{\mathcal S_{i}} \Sigma(u_{i,\alpha},p_{i,\alpha}) : D(v) .
	\end{align}
By \eqref{eq:decay.nabla.U}, and by observing that the pressure $P_{\mathcal C_i}$ satisfies the same pointwise decay estimates as the velocity gradient $\nabla U_{\mathcal C_i}$
	\begin{align}
	\label{doute}
	    \left|\int_{\mathcal S_{i}}  \Sigma(u_{i,\alpha},p_{i,\alpha}) : D(v)\right| \leq  C \|v\|_{W^{1,\infty}{(\mathcal S_{i})}} \int_{\mathcal S_{i}}  \frac 1 {\dist(x,\mathcal C_i)} \dd x \leq C \eps \|v\|_{W^{1,\infty}{(\mathcal S_{i})}} .
	\end{align}
		Let $w := \sum_{i=1}^N w_i$, where $w_i$ is the solution to \eqref{eq:Stokes.data.filaments}.
Then $w = v$ in $\cup_{j=1}^N \mathcal S_j$ and thus,  
	\begin{align*}
		 \int_{\cup_{j=1}^N \partial \mathcal S_j}  (\Sigma (V_{i,\alpha}, P_{i,\alpha}) n) \cdot v \, d\mathcal{H}^2 
	&=  \int_{\cup_{j=1}^N \partial \mathcal S_j}  (\Sigma (V_{i,\alpha}, P_{i,\alpha}) n) \cdot w \, d\mathcal{H}^2 
	\\ & = 
	 \int_{ \mathcal F} D(V_{i,\alpha}) : D(w) 
	 	\\ & = 
	 \int_{ \R^3 \setminus \mathcal S_i } D(V_{i,\alpha}) : D(w) ,
	\end{align*}
since 
 $D (V_{i, \alpha}) = 0 $ in $\cup_{{j\neq i}} \mathcal S_j$. On the other hand, it follows from \eqref{dela} that 
 	\begin{align*}
\int_{\partial \mathcal S_i} \Sigma (u_{i,\alpha}, p_{i,\alpha})  n  \cdot v {d\mathcal{H}^2} &= \int_{\partial \mathcal S_i} \Sigma (u_{i,\alpha}, p_{i,\alpha})  n  \cdot w  {\
d\mathcal{H}^2}
\\ &=
 \int_{ \R^3 \setminus \mathcal S_i } D(u_{i,\alpha}) : D(w) {\
d\mathcal{H}^2},
	\end{align*}
	so that 
		\begin{align*}
&	 \int_{\cup_{j=1}^N \partial \mathcal S_j}  (\Sigma (V_{i,\alpha}, P_{i,\alpha}) n) \cdot v \, d\mathcal{H}^2 
		-  \int_{\partial \mathcal S_i} \Sigma (u_{i,\alpha}, p_{i,\alpha})  n  \cdot w {d\mathcal{H}^2}
	\\ & \quad	= 
	\int_{  \R^3 \setminus \mathcal S_i } D(V_{i,\alpha} - u_{i,\alpha}) : D(w) .
			\end{align*}
Therefore, by Theorem \ref{lem:slender.several}, we deduce that  
	\begin{align}
	\nonumber
		&\left| \int_{\cup_{j=1}^N \partial \mathcal S_j}  (\Sigma (V_{i,\alpha}, P_{i,\alpha}) n) \cdot h \, d\mathcal{H}^2 
		-  \int_{\partial \mathcal S_i} \Sigma (u_{i,\alpha}, p_{i,\alpha})  n  \cdot v {d\mathcal{H}^2}
\right| \\ 	\nonumber
& \leq 
 \| u_{i,\alpha} - V_{i,\alpha}\|_{\dot H^1(\R^3 \setminus \mathcal S_i)}  \|w\|_{\dot H^1(\R^3)} 
 \\ \label{29}
		& \leq C | \log \eps |^{-3/2} \|v\|_{W^{1,\infty}{(\cup_{j}\mathcal S_j)}}.
	\end{align}
	Gathering 
	 \eqref{decompos}, \eqref{24}, 
\eqref{doute}
and \eqref{29} we arrive at 
 \eqref{eq:Faxen.force} and
this finishes the proof of Corollary  \ref{prop:asymptotic.resistance}.

%%%%%%%%%%%%%%%%%%%%%%%%%%%%%%%%%
 \section{Shape derivatives}
 \label{shape-derivative}

Since the filaments evolve in time, it is necessary to tackle the behaviour of the solutions to the Stokes system with Dirichlet data in  the  filaments under rigid displacements of the filaments. To this end we establish the following bound on the shape derivatives, with respect to rigid motions of the filaments, of the interaction energy of two solutions to the Stokes system with fixed values in the filaments. This bound is uniform with respect to $\eps$ and to the positions for which a positive minimal distance between the centerlines is guaranteed.   
\begin{Proposition} \label{lem:shapeDerivatives}  
For all $d > 0$, there are $C(d)>0$ and $\eps_0(d)>0$ such that for all $\eps$ in $(0, \eps_0)$, for all  $(h,Q)  $ in $ \R^{3N} \times SO(3)^N$ such that the corresponding minimal distance $\dmin$ between the centerlines defined in	\eqref{dmin}, satisfies $\dmin \geq d$,
and for all divergence-free vector fields $\varphi_1$ and  $\varphi_2$
in $W^{2,\infty}(\R^3)$, the following holds true. The corresponding solutions  $\psi_1$ and $\psi_2$   in 	$ \dot H^1(\mathcal F)$
to the Stokes problem
\begin{align} \label{def:psi}
	\begin{aligned}
		- \Delta \psi_i + \nabla p_i &= 0 \quad \text{ and } \quad  \div \psi_i = 0 \quad \text{in } \mathcal F, \\
		\psi_i &= \varphi_i  \quad \text{on } \cup_{j=1}^N \partial \mathcal S_j ,
	\end{aligned}
\end{align}
where the position $\mathcal S_j$ of the filaments are deduced from their original positions by $(h,Q)$ as in \eqref{def:Stourne}, satisfy
	\begin{align} \label{vendred}
	&	| \nabla_{h,Q} \big(D (\psi_1),D (\psi_2)\big)_{L^2(\mathcal F)}| 
\\ \nonumber &\quad 	\leq C |\log \eps|^{-1}\,  \Big( \|\varphi_1\|_{W^{1,\infty} (\R^3) } \|\varphi_2\|_{W^{2,\infty} (\R^3)}
	+ 
	\|\varphi_1\|_{W^{2,\infty} (\R^3) } \|\varphi_2\|_{W^{1,\infty} (\R^3)}
\Big)	.
	\end{align}
\end{Proposition}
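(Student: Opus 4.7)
The plan is to estimate the derivative of $J(h, Q) := (D \psi_1, D \psi_2)_{L^2(\mathcal F)}$ by pulling the moving-boundary problem back to a fixed reference configuration. I fix a tangent direction $(v^h, v^Q) \in \R^{3N} \times \R^{3N}$, and choose a vector field $V \in C_c^\infty(\R^3; \R^3)$ equal to the rigid infinitesimal motion $v^h_j + \omega_j \wedge (x - h_j)$ on a neighborhood of each filament $\mathcal S_j$, with $\supp V \subset \cup_j B_{d/4}(\mathcal C_j)$. Let $\Phi_t$ denote the flow of $V$. Applying the Piola transform $\hat \psi_i^t(y) := \det(\nabla \Phi_t(y))(\nabla \Phi_t(y))^{-1} \psi_i^t(\Phi_t(y))$ to the solution $\psi_i^t$ on the perturbed configuration $(h^t,Q^t)$ yields a divergence-free field on the fixed domain $\mathcal F$ solving a perturbed Stokes system whose coefficients depend smoothly on $\nabla \Phi_t$ and reduce to the standard Stokes coefficients on $\R^3 \setminus \supp V$; the associated Dirichlet data on $\partial \mathcal S_j$ is smooth in $t$ and bounded in $W^{1, \infty}$ by $C \|\varphi_i\|_{W^{1,\infty}}$.

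\textbf{Expansion of the derivative.} In reference coordinates, one obtains
\[
J(t) = \int_{\mathcal F} \mathcal A_t(\hat \psi_1^t, \hat \psi_2^t) \, dy,
\]
for a symmetric bilinear form $\mathcal A_t$ on $H^1(\mathcal F; \R^3)$ with $\mathcal A_0(u, v) = 2 D u : D v$ and $\| \partial_t \mathcal A_t|_{t=0}\|_{L^\infty} \leq C \|V\|_{W^{1,\infty}}$. Differentiating at $t = 0$ splits
\[
J'(0) = \underbrace{\int_{\mathcal F} (\partial_t \mathcal A_t)|_{t = 0}(\psi_1, \psi_2) \, dy}_{=: I_1} + \underbrace{\mathcal A_0(\dot{\hat \psi}_1, \psi_2) + \mathcal A_0(\psi_1, \dot{\hat \psi}_2)}_{=: I_2},
\]
where $\dot{\hat \psi}_i := \partial_t \hat \psi_i^t|_{t = 0}$. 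The term $I_1$ is bounded immediately by Cauchy--Schwarz and the $\dot H^1$-estimate \eqref{est:u_H^1} of Theorem \ref{lem:slender.several}:
\[
|I_1| \leq C \|V\|_{W^{1,\infty}} \|\nabla \psi_1\|_{L^2(\mathcal F)} \|\nabla \psi_2\|_{L^2(\mathcal F)} \leq C |\log \eps|^{-1} \|\varphi_1\|_{W^{1,\infty}} \|\varphi_2\|_{W^{1,\infty}},
\]
which is of the required order.

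\textbf{Bound on $I_2$ and main obstacle.} By Cauchy--Schwarz,
\[
|I_2| \leq C \left( \|\dot{\hat \psi}_1\|_{\dot H^1(\mathcal F)} \|\psi_2\|_{\dot H^1(\mathcal F)} + \|\psi_1\|_{\dot H^1(\mathcal F)} \|\dot{\hat \psi}_2\|_{\dot H^1(\mathcal F)} \right),
\]
so the estimate \eqref{vendred} reduces to the key bound
\[
\|\dot{\hat \psi}_i\|_{\dot H^1(\mathcal F)} \leq C |\log \eps|^{-1/2} \|\varphi_i\|_{W^{2,\infty}}.
\]
The field $\dot{\hat \psi}_i$ solves a Stokes-type system on $\mathcal F$ with divergence-free constraint, Dirichlet data essentially $V \cdot \nabla \varphi_i - \omega_j \wedge \varphi_i$ on $\partial \mathcal S_j$ (bounded in $W^{1,\infty}$ by $C \|\varphi_i\|_{W^{2,\infty}}$), and an interior source term supported in $\supp \nabla V \subset \cup_j (B_{d/4}(\mathcal C_j) \setminus B_{d/8}(\mathcal C_j))$, which lies at distance $\geq d/8$ from every filament centerline. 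On this annular region the slender-body approximation $\psi_i \approx |\log \eps|^{-1} U_{\mathcal C_j}[\varphi_i]$ from Theorem \ref{lem:slender.several}, together with pointwise estimates \eqref{eq:decay.U}--\eqref{eq:decay.nabla.U} for $U_{\mathcal C_j}$, ensures that $\psi_i$ and $\nabla \psi_i$ are of size $O(|\log \eps|^{-1})$ in $L^\infty$, so the source term has $L^2$-norm $O(|\log \eps|^{-1})$---strictly smaller than the Dirichlet contribution. The Dirichlet part is then handled by constructing an approximate solution via $|\log \eps|^{-1} U_{\mathcal C_j}$ and invoking Helmholtz's minimum dissipation principle (Theorem \ref{Helmholtz}), exactly as in the proof of Theorem \ref{lem:slender.several}; the source contribution is absorbed by testing against the logarithmic-capacitary extension constructed in the proof of Lemma \ref{lem:improved}, whose $\dot H^1$-norm is $O(|\log \eps|^{-1/2})$. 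The main obstacle is coupling these two estimates uniformly in $\eps$ and in all configurations with $d_{\min} \geq d$, which requires carrying the slender-body machinery of Section \ref{sec:singularity.method} through to the perturbed system and keeping track of how the Piola coefficients interact with the cut-off extensions of Section \ref{sec:H^1.estimates}.
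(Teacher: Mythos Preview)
Your approach is essentially the paper's: pull the moving-domain problem back to a fixed configuration, so that the shape derivative splits into a ``metric'' term (your $I_1$) and a term involving the derivative of the transported solution (your $I_2$), and then control each piece using the $\dot H^1$ bound \eqref{est:u_H^1} of Theorem~\ref{lem:slender.several}. The paper does this with a finite-difference formulation (two configurations $\delta$ apart, transported by $\phi$ in \eqref{def-phi}) rather than an infinitesimal flow, and uses the transform $\Phi\,\widecheck\psi_i\circ\phi$ corrected by a Bogovski\u{\i} term instead of the Piola transform, but the skeleton is identical: your $\dot{\hat\psi}_i$ plays the role of the paper's $\tilde\psi_i-\psi_i$, and both are decomposed into a boundary part (estimated by \eqref{est:u_H^1} with data of size $\|\varphi_i\|_{W^{2,\infty}}$) and a source part.

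One place where you work harder than needed: you invoke interior regularity and the slender-body approximation to get $\nabla\psi_i=O(|\log\eps|^{-1})$ in $L^\infty$ on the annulus, so that the source has $L^2$ norm $O(|\log\eps|^{-1})$. This is correct, but unnecessary. The paper simply uses the global bound $\|\nabla\psi_i\|_{L^2(\mathcal F)}\leq C|\log\eps|^{-1/2}\|\varphi_i\|_{W^{1,\infty}}$ from \eqref{est:u_H^1}, which already makes the source $O(|\log\eps|^{-1/2})$ in $L^2$ and hence the source contribution to $\dot{\hat\psi}_i$ of size $O(|\log\eps|^{-1/2})$ in $\dot H^1$---exactly what you need. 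Your appeal to the ``logarithmic-capacitary extension'' of Lemma~\ref{lem:improved} is likewise not needed here: once the source $f$ is $O(|\log\eps|^{-1/2})$ in $L^2$ with compact support, the standard energy estimate $\|w^I\|_{\dot H^1}\leq C\|f\|_{L^{6/5}}$ closes the argument directly. Your final paragraph labels as the ``main obstacle'' what is in fact routine bookkeeping once these simplifications are made; the paper's Steps~4--7 carry it out without difficulty.
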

Above the notation $\big(\cdot , \cdot\big)_{L^2(\mathcal F)}$ stands for the inner product in $L^2(\mathcal F)$.
\begin{proof}[Proof of Proposition \ref{lem:shapeDerivatives}]
To establish the bound \eqref{vendred} of the shape derivative of the  interaction energy
$$\big(D (\psi_1),D (\psi_2)\big)_{L^2(\mathcal F)} ,$$ we estimate the difference of such interaction energies 
 corresponding to two close configurations of the filaments.
To this end, let $d>0$, let $\varphi_1,\varphi_2 $  divergence-free vector fields in $W^{2,\infty}(\R^3)$. Let  $(h,Q) \in \R^{3N} \times SO(3)^N$ with $\dmin \geq d$
 and let $(\widecheck h,\widecheck Q) \in \R^{3N} \times SO(3)^N$ such that $|(h, Q) - (\widecheck h,\widecheck Q)| \leq \delta$ small enough, to be chosen later.
For $i = 1,2$, let $\psi_i$ and $ \widecheck \psi_i$,  the solutions to \eqref{def:psi} corresponding to the same boundary data $\varphi_i$,  and to the filaments' positions $(h, Q)$ and $(\widecheck h,\widecheck Q)$, respectively. Corresponding pressures are denoted by $p_i$ and $\widecheck p_i$.
Thus, for $i = 1,2$, on the one hand  $( \psi_i ,p_i )$ satisfies \eqref{def:psi}
and on the other hand $(\widecheck  \psi_i,\widecheck  p_i )$ satisfies 
\begin{align} \label{def:psi-bar}
	\begin{aligned}
		- \Delta \widecheck \psi_i + \nabla \widecheck p_i &= 0 \quad \text{ and } \quad  \div \widecheck  \psi_i = 0 \quad \text{in } \widecheck {\mathcal F} , \\
	\widecheck 	\psi_i &=  \varphi_i  \quad \text{on } \cup_{j=1}^N \partial  \widecheck {\mathcal S}_j ,
	\end{aligned}
\end{align}
where $\mathcal F$ and $\widecheck {\mathcal F}$  denote the fluid domain respectively corresponding to   $(h, Q)$ and  $(\widecheck h, \widecheck Q)$, while the sets $ {\mathcal S}_j$ and $\widecheck {\mathcal S}_j$   are the positions respectively occupied by the filaments  in the two configurations.

To prove  Proposition \ref{lem:shapeDerivatives} 
 we are going to prove that  there are $C(d)>0$ and $\eps_0(d) >0$ such that for all $\eps$ in $(0, \eps_0)$, 
\begin{align} \label{est:psi_1.psi_2}
	&	|(D(\psi_1),D(\psi_2))_{L^2(\mathcal F)}  - (D(\widecheck \psi_1) ,D(\widecheck \psi_2))_{L^2(\widecheck{\mathcal F})}|
		\\  \nonumber & \quad \leq C\delta |\log \eps|^{-1}\, 
		 \Big( \|\varphi_1\|_{W^{1,\infty} (\R^3) } \|\varphi_2\|_{W^{2,\infty} (\R^3)}
	+ 
	\|\varphi_1\|_{W^{2,\infty} (\R^3) } \|\varphi_2\|_{W^{1,\infty} (\R^3)}
\Big).
\end{align}

Without loss of generality, we may restrict the proof  to the case where only one filament is displaced, say the first one, so that the positions of the other filaments is the same for the two configurations, that is  $(\widecheck h_j, \widecheck Q_j) = (h_j,Q_j)$ for all $2 \leq j  \leq N$. As a consequence,  
\begin{equation}
    \label{lezautre}
   {\mathcal S}_j = \widecheck {\mathcal S}_j, \text{ for all }  2 \leq j  \leq N. 
\end{equation}
    Moreover, up to a change of frame,  we may also assume without loss of generality that the position of the first filament satisfies  $(h_1,Q_1) = (0,\Id)$, and we recall that the position $(\widecheck h_1, \widecheck Q_1)$ of the first filament in the second configuration is in general different from $(0,\Id)$ but $\delta$-close so that 
  \begin{equation} \label{delta-c}
 |\widecheck h_1 | + |\widecheck Q_1 - \Id | \leq   \delta.
  \end{equation}
\ \par \

\noindent\emph{Step 1: Construction of a suitable deformation.} 
In this first step we introduce an auxiliary vector field associated with $\widecheck  \psi_i$, see \eqref{kejetrans} for the definition, but which solves a Stokes system in $\mathcal F$, see \eqref{subtilde} for the exact system. 

 We choose a neighborhood $\mathcal T_1$ defined by $\mathcal T_1 =  \mathcal S_1^{2\eps}$ (i.e. the filament corresponding to $2 \eps$ instead of $\eps$). Lemma \ref{nioul} ensures that the set 
 \begin{equation} \label{J}
     \mathcal J := B_{d/4}(\mathcal C_1) \setminus \mathcal T_1 , 
 \end{equation}
 is a John domain with a  constant $Z$ independent of $\eps$ for $\eps$ sufficiently small. 
Let $\eta \in C_c^{\infty}(B_{d/4}(\mathcal C_1))$ be a nonnegative  cut-off function such that
$\eta = 1$ in  $\mathcal T_1$.

Let $\phi$ be the function from $\R^3$ to $\R^3$ such that for all $x$ in $\R^3$, 
\begin{equation} \label{def-phi}
	\phi(x) := x + \left((\widecheck Q_1 - \Id)x + \widecheck h_1   \right) \eta(x).
\end{equation}
By construction, 
\begin{equation} \label{dodomain}
\phi(\mathcal F) = \widecheck {\mathcal  F} , \quad \text{ in particular } \quad
\phi(\mathcal S_1) = \widecheck {\mathcal  S}_1 ,
\end{equation}
 and
\begin{equation} \label{elaid}
\phi \vert_{\R^3 \setminus (B_{d/4}(\mathcal C_1))} = \Id_{\R^3 \setminus (B_{d/4}(\mathcal C_1))} .
\end{equation}
Moreover $\phi$ and $\phi^{-1}$ are diffeomorphisms from $\R^3$ to $\R^3$ 
with 
\begin{equation}
    \label{pourcp}
    |\nabla \phi| + |\nabla \phi^{-1}| \leq C ,
\end{equation}
 for a constant $C$ independent of $d$ for $\delta$ sufficiently small. 
To see that $\phi$ is injective for $\delta$ sufficiently small, and to estimate $\nabla \phi^{-1}$, we estimate $|\phi(x_1) - \phi(x_2)|$ from below for all $x_1,x_2 \in \R^3$. Clearly, if $x_i \not \in \supp \eta$ for $i = 1,2$, the estimate is trivial.
Let us assume $x_2 \in \supp \eta$ and note that since $h_1 = 0$ this implies $|x_2| \leq C + d$,
where the constant $C$ depends only on the reference filament $\bar{\mathcal S}_1$.
Thus, using \eqref{def-phi} and \eqref{delta-c}, we have that 
\begin{align*}
    &|\phi(x_1) - \phi(x_2)| \\
    &\geq |x_1 - x_2| - \big|\big((\widecheck Q_1 - \Id)x_1 + \widecheck h_1   \big) \eta(x_1) - \big((\widecheck Q_1 - \Id)x_2 + \widecheck h_1   \big) \eta(x_2)\big| \\
    & \geq |x_1 - x_2| - |\widecheck Q_1 - \Id||x_1 - x_2| \eta(x_1) - \big|(\widecheck Q_1 - \Id)x_2 + \widecheck h_1   \big||\eta(x_1) - \eta(x_2)| \\
    & \geq \frac 1 2 |x_1 - x_2|,
\end{align*}
 for $\delta$ sufficiently small. 

Furthermore, for all $x$ in $\R^3$, we set
\begin{equation} \label{def:Phi}
	\Phi(x) := 	{\nabla\phi(x)}  = \Id +   (\widecheck Q_1^T - \Id) \eta(x) + 
	\left((\widecheck Q_1 - \Id)x + \widecheck h_1   \right)   \otimes \nabla \eta,
\end{equation}
with the convention that $\nabla\phi = (\partial_i \phi_j )_{i,j}$.
From the definition of $\phi$ and $\Phi$ in \eqref{def-phi} and \eqref{def:Phi}
it follows that 
\begin{align} \label{esti-mir}
|(\nabla \phi^T) \nabla \phi - \mathrm{Id}|+
|\nabla \Phi|+ |\nabla^2 \Phi| &\leq C  \delta  \  \mathbf{1}_{\mathcal J},    
\end{align}
where $  \mathbf{1}_\mathcal{J}$ is  the indicator function of the set $\mathcal{J}$  defined in \eqref{J}.

Since $\eta = 1$ in $\mathcal T_1$, 
\begin{equation} \label{soliv-v}
\text{for all} \, x \in \mathcal T_1 , \quad 
(\Phi \widecheck \psi_i \circ \phi)(x) = \widecheck Q_1^T  \psi_i ( \widecheck Q_1 x + \widecheck h_1 ) .
	\end{equation}
In particular the vector field 
$\Phi \widecheck \psi_i \circ \phi$   is divergence-free in   $\mathcal T_1$.
We define
\begin{equation} \label{kejetrans}
	\tilde \psi_i := \Phi \widecheck \psi_i \circ \phi  - \Bog (\dv (\Phi \widecheck \psi_i \circ \phi)) \quad \text{ and } \quad  
	\tilde p_i = \widecheck p_i \circ \phi ,
\end{equation}
where $\Bog$ denotes a Bogovski\u{i} operator provided by Theorem \ref{John} in the domain $\mathcal J$, which satisfies
\begin{equation}\label{support}
\supp(\dv(\Phi \widecheck \psi_i \circ \phi)) \subset \mathcal J .
\end{equation}
Recall that, according to Theorem \ref{John} and the fact that 
$\mathcal J$ is a John domain with a  constant $Z$ independent of $\eps$ for $\eps$ sufficiently small,  the operator $\Bog$ mentioned above satisfies that there exists $C>0$ such that for $\eps$ sufficiently small,
\begin{align} \label{Bog-J}
\text{for all } \, f \in L^2_0(\mathcal J), \quad 	\|\Bog f\|_{H^1 (\mathcal J) } \leq C \|f \|_{L^2 (\mathcal J)}.
\end{align}

\noindent\emph{Step 2: The divergence of $\Phi \widecheck \psi_i \circ \phi$.} In this step, we prove the following identity,  which in combination with \eqref{esti-mir}, is helpful below, see \eqref{giro}, to prove that in $\mathcal J$, 
$\dv ( \Phi \widecheck \psi_i \circ \phi)$ is a $O(\delta)$:
\begin{align}\label{idenden}
\dv (\Phi \widecheck \psi_i \circ \phi) =
(\dv \Phi) \cdot (\psi_i \circ \phi) +
\Big(
  (\nabla \phi)^T  \nabla \phi - Id \Big) :  ((\nabla \widecheck \psi_i) \circ \phi ) .  
\end{align}

Let us first recall that for some regular enough fields  of matrices $A$ and  of vectors $v$, the following identity holds true: 
\begin{align} \label{piola}
\div (Av) = (\dv A) \cdot v + A : \nabla v  ,
		\end{align}
where the operator $\dv$ has to be applied row-wise to $A$. 

In particular, by applying \eqref{piola} to the case where $A= \Phi$ and $v= \widecheck \psi_i \circ \phi$, and recalling that $\Phi = \nabla \phi$, we obtain that %
\begin{align} 
\dv (\Phi \widecheck \psi_i \circ \phi) &=
(\Div \Phi) \cdot (\psi_i \circ \phi) +
\nabla \phi  :  \nabla ( \widecheck \psi_i \circ \phi )\nonumber \\
&= (\Div \Phi) \cdot (\psi_i \circ \phi) +
(\nabla \phi)^T \nabla \phi  :  ((\nabla  \widecheck \psi_i) \circ \phi ), \label{idendown}
\end{align}
where we used in the last identity that $A:BC = B^T A : C$ for any $A,B,C \in R^{3 \times 3}$.

Finally, by definition  $\dv \widecheck  \psi_i = Id : \nabla \widecheck \psi = 0$. Using this in \eqref{idendown}  yields \eqref{idenden}. 
\ \par \

\noindent\emph{Step 3: The Stokes system solved by $\tilde \psi_i$, for $i=1,2$.} 
Observe the following fact: 
\begin{equation} \label{soliv-tout}
\text{for all} \, x \in \mathcal T_1 , \quad 
\tilde \psi_i (x) = \widecheck Q_1^T \widecheck \psi_i ( \widecheck Q_1 x + \widecheck h_1 )
\quad \text{ and } \quad 
	\tilde p_i(x) =  \widecheck p_i  ( \widecheck Q_1 x + \widecheck h_1 ) .
	\end{equation}
Using $\Phi = (\nabla \phi)^T$ and some tensor calculus similar as in the previous step,  we find in $\mathcal F$
\begin{align*}
    - \Delta \tilde \psi_i + \nabla  \tilde p_i &=  - (\Delta \Phi) \widecheck \psi_i \circ \phi + \Delta \Bog (\dv (\Phi \widecheck \psi_i \circ \phi))
    - 2 \nabla \Phi \nabla (\widecheck \psi_i \circ \phi)  \\
    & -\Phi \Big( (\nabla \phi)^T \nabla \phi  : \nabla^2 \widecheck \psi_i \circ \phi \Big) + \Phi \nabla \widecheck p_i \circ \phi \\
    & = -(\Delta \Phi) \widecheck \psi_i \circ \phi + \Delta \Bog (\dv (\Phi \widecheck \psi_i \circ \phi)) - 2 \nabla \Phi \nabla (\widecheck \psi_i \circ \phi) \\
    &+ \Phi (\Id - (\nabla \phi)^T\nabla \phi ) : \nabla^2  \widecheck \psi_i \circ \phi , 
\end{align*}
where we used that $   \nabla \widecheck p_i = \Delta \widecheck \psi_i$  in $\mathcal F$. 
 
Concerning the last term, a further manipulation leads to 
\begin{align*}
\Phi (\Id - (\nabla \phi)^T \nabla \phi ) : \nabla^2 \widecheck \psi_i \circ \phi    &=  \Phi (\Id - (\nabla \phi)^T\nabla \phi ) :(\nabla \phi)^{-T} \nabla( \nabla \widecheck \psi_i \circ \phi) \\
    &= \dv \left( \Phi (\Id - (\nabla \phi)^T \nabla \phi ) (\nabla \phi)^{-T} : \nabla \widecheck \psi_i \circ \phi \right)  \\
    &\quad - \dv\left( \Phi (\Id - (\nabla \phi)^T\nabla \phi ) : (\nabla \phi)^{-T} \right)  \nabla \widecheck \psi_i \circ \phi.
\end{align*}

Therefore, and relying on \eqref{def:psi-bar}, \eqref{def-phi}, \eqref{kejetrans}, \eqref{soliv-tout}  and 
\eqref{dodomain}, 
 we obtain that $\tilde \psi_i$ solves the following Stokes system:
 \begin{subequations}
 \label{subtilde}
\begin{align}
	- \Delta \tilde \psi_i + \nabla \tilde p_i &= \dv g_i + f_i \quad \text{ and } \quad  \dv \tilde \psi_i  = 0 \quad \text{in }  \mathcal F, \\
	\tilde \psi_i (x) &= \widecheck Q_1^T  \varphi_1(\widecheck Q_1 x  + \widecheck h_1 )  \quad \text{in } \mathcal S_1, \\
	\tilde \psi_i &= \varphi_i \quad \text{in } \mathcal S_j \quad \text{for all } j \neq 1.
	\end{align}
	 \end{subequations}
where
\begin{align}
    g_i &:= - \nabla \Bog (\dv (\Phi \widecheck \psi_i \circ \phi)) + \Phi (\Id -  (\nabla \phi)^T\nabla \phi) (\nabla \phi)^{-T}  \nabla \widecheck \psi_i \circ \phi, \label{def:g_i}\\
    f_i &:= -(\Delta \Phi) \widecheck \psi_i \circ \phi - 2 \nabla \Phi \nabla (\widecheck \psi_i \circ \phi) 
    + \dv\left( \Phi (\Id - (\nabla \phi)^T\nabla \phi ) (\nabla \phi)^{-T} \right)  \nabla \widecheck \psi_i \circ \phi. \label{def:f_i}
\end{align}
We observe that $g_i$ and $f_i$ are compactly supported in $ \mathcal J$, see \eqref{esti-mir}. 

 \ \par \

\noindent\emph{Step 4: Estimate of $g_i$ and $f_i$, for $i=1,2$.}
We start with estimating the term of $g_i$  involving the Bogovski\u{i} operator.
First, using that 
$\phi$ is a diffeomorphism  satisfying 
\eqref{pourcp}, 
we obtain that
\begin{align} \label{ctcent}
    \|\nabla \widecheck \psi_i \circ \phi\|_{L^2(\R^3)} \leq C     \|\nabla \widecheck \psi_i\|_{L^2(\R^3)} \leq C |\log \eps|^{-1/2} \|\varphi_i\|_{W^{1,\infty}(\R^3)} ,
\end{align}
by applying Theorem \ref{lem:slender.several}. 
Moreover, by applying \eqref{piola}, it follows 
\begin{align} \label{giro}
|\dv (\Phi \widecheck \psi_i \circ \phi)| 
& \leq C {\delta} \left( |\widecheck \psi_i \circ \phi| 
+ |\nabla \widecheck \psi_i \circ \phi|\right).
\end{align}
Therefore, by using \eqref{Bog-J}
\begin{equation*}
    \|\nabla(\Bog(\dv(\Phi \widecheck \psi_i \circ \phi)))\|_{L^2} \leq C
    \|\dv(\Phi \widecheck \psi_i \circ \phi)\|_{L^2} 
    \leq  C \delta |\log \eps|^{-1/2} \|\varphi_i\|_{W^{1,\infty}(\R^3)},
\end{equation*}
where we used 
\eqref{ctcent} to get the last inequality.

Similarly, we  can estimate all the other terms on the right hand sides of \eqref{def:f_i} and \eqref{def:g_i} by 
\begin{align} \label{midi4}
     \|g_i\|_{L^2(\mathcal F)} + \|f_i\|_{L^{\frac 6 5 }(\mathcal F)} \leq C \delta |\log \eps|^{-1/2} \| \varphi_i\|_{W^{1,\infty}(\R^3)}.
\end{align}
\ \par \

\noindent\emph{Step 5: The interaction energy $2(D(\tilde \psi_1), D(\tilde \psi_2))_{L^2(\mathcal F)}$.}
By \eqref{elaid} and \eqref{lezautre}, for $2\leq j \leq N$, 
\begin{align*}
\int_{\partial \widecheck{\S}_j} 
	\Sigma(\widecheck \psi_1,\widecheck p_1) n \cdot \widecheck \psi_2 =
	 \int_{\partial {\S}_j} 
	\Sigma(\tilde \psi_1,\tilde p_1) n \cdot \tilde \psi_2 .
\end{align*}
On the other hand, by \eqref{soliv-tout}, 
\eqref{def-phi} and 
\eqref{dodomain}, the chain rule and a change of variable (observe that the normal is also rotated), 
\begin{align*}
	\int_{\partial \widecheck{\S}_1} 
	\Sigma(\widecheck \psi_1,\widecheck p_1) n \cdot \widecheck \psi_2 =
	 \int_{\partial {\S}_1} 
	\Sigma(\tilde \psi_1,\tilde p_1) n \cdot \tilde \psi_2 .
\end{align*}
Therefore, by some integrations by parts, 
from the two previous identities, \eqref{def:psi-bar} and \eqref{subtilde}, we arrive at 
\begin{align*}
	2(D(\widecheck \psi_1), D(\widecheck \psi_2))_{L^2(\widecheck{\mathcal F})} 
	&= \sum_{j=1}^N \int_{\partial \widecheck{\S}_j} 
	\Sigma(\widecheck \psi_1,\widecheck p_1) n \cdot \widecheck \psi_2 \\
	&= \sum_{j=1}^N  \int_{\partial {\S}_j} 
	\Sigma(\tilde \psi_1,\tilde p_1) n \cdot \tilde \psi_2 \\
&= 	 2(D(\tilde \psi_1), D(\tilde \psi_2))_{L^2(\mathcal F)} 
+  (g_1,  \nabla \tilde \psi_2)_{L^2(\mathcal F)} +  (f_1,  \tilde \psi_2)_{L^2(\mathcal F)}.
\end{align*}
Therefore, 
\begin{align*}
	&2(D(\widecheck \psi_1), D(\widecheck \psi_2))_{L^2(\widecheck{\mathcal F})} - 2(D(\psi_1), D(\psi_2))_{L^2(\mathcal F)}\\
	&=   2(D(\tilde \psi_1), D(\tilde \psi_2))_{L^2(\mathcal F)} - 2(D(\psi_1), D(\psi_2))_{L^2(\mathcal F)} + (g_1,  \nabla \tilde \psi_2)_{L^2(\mathcal F)} +  (f_1,  \tilde \psi_2)_{L^2(\mathcal F)}\\
	&=   2(D(\tilde \psi_1 - \psi_1), D(\tilde \psi_2))_{L^2(\mathcal F)} - 2(D(\psi_1), D(\psi_2-\tilde \psi_2))_{L^2(\mathcal F)} \\
	&+ (g_1,  \nabla \tilde \psi_2)_{L^2(\mathcal F)} +  (f_1,  \tilde \psi_2)_{L^2(\mathcal F)}.
	\end{align*}
	Thus, by the Cauchy-Schwarz inequality, the H\"older inequality and  the Sobolev embedding of $\dot H^1 (\R^3)$ into $L^6 (\R^3)$, we obtain that 
\begin{align} \label{midi1}
\begin{split}
	&|2(D(\widecheck \psi_1), D(\widecheck \psi_2))_{L^2(\widecheck{\mathcal F})} - 2(D(\psi_1), D(\psi_2))_{L^2(\mathcal F)}|\\ 
	&\leq   \|\tilde \psi_1 -  \psi_1\|_{\dot H^1(\mathcal F)} \|\tilde \psi_2\|_{\dot H^1(\mathcal F)} +  \|\tilde \psi_1\|_{\dot H^1(\mathcal F)} \|\tilde \psi_2 -  \psi_2\|_{\dot H^1(\mathcal F)}\\
	&\quad + \left(\|g_1\|_{L^2(\mathcal F)} + C \|f_1\|_{L^{\frac 6 5 }(\mathcal F)}\right)  \|\tilde \psi_2\|_{\dot H^1 (\R^3)} ,
	\end{split}
\end{align}
 recalling that $g_1$ and $f_1$ are compactly supported in $ \mathcal J \subset \mathcal F$.
\ \par \

\noindent\emph{Step 6: Estimates of $\tilde \psi_i$ and of $\tilde \psi_i - \psi_i$, for $i=1,2$.}
First,  we decompose $\tilde \psi_i$, for $i=1,2$, into
 $$\tilde \psi_i  = w_i^I + w_i^B,$$ where $  w_i^I $ and $ w_i^B$ are the solutions to the following Stokes systems respectively  corresponding to the interior source term and to the boundary data in  \eqref{subtilde}:
\begin{align*}
	- \Delta w_i^I + \nabla p_i^I &= -\dv g_i + f_i \quad \text{ and } \quad \dv w_i^I = 0 \quad \text{in } \mathcal F, \\
	w_i^I &= 0 \quad \text{in } \cup_{j=1}^N \mathcal S_j,
\end{align*} 
and
\begin{align*}
	- \Delta w_i^{B} + \nabla p_i^{B} &= 0 \quad \text{ and } \quad \dv w_i^{B} = 0 \quad \text{in } \mathcal F, \\
	w_i^{B}(x) &=    \widecheck Q_1^T  \varphi_i(\widecheck Q_1   x + \widecheck h_j  )\quad \text{in }  \mathcal S_1, \\
	 w_i^{B}(x)  &= \varphi_i(x) \quad \text{in } \cup_{ 2 \leq j \leq N} \mathcal S_j .
\end{align*}
On the one hand, by a straightforward energy estimate, we have that
\begin{align*}
	\| w^I_i \|_{\dot H^1(\R^3)} \leq \|g_1\|_{L^2 (\mathcal F)} + C \|f_1\|_{L^{\frac 6 5 }(\mathcal F)}.
\end{align*}
On the other hand, by Theorem \ref{lem:slender.several} 
we have that 
\begin{align*}
	\| w_i^{B} \|_{\dot H^1 (\R^3)} \leq C |\log \eps|^{-1/2}\| \varphi_i\|_{W^{1,\infty}(\R^3)}  .
\end{align*}
Thus, we arrive at 
\begin{align}
\label{midi2}
	\|\tilde \psi_i\|_{\dot H^1(\R^3)} \leq C |\log \eps|^{-1/2}\| \varphi_i\|_{W^{1,\infty} (\R^3)} + \| g_i\|_{L^2(\mathcal F)}  + C \|f_i\|_{L^{\frac 6 5 }(\mathcal F)}.
\end{align}

Similarly,  for $i=1,2$, using \eqref{def:psi} and \eqref{subtilde}, we decompose $\tilde \psi_i -  \psi_i$ into 
$$\tilde \psi_i -  \psi_i = w_i^I + w_i^{B,\textrm{diff}} ,$$
with, this time, the boundary term $w_i^{B,\textrm{diff}}$ satisfying the following Stokes system:
\begin{align*}
	- \Delta w_i^{B,\textrm{diff}} + \nabla p_i^{B,\textrm{diff}} &= 0 \quad \text{ and } \quad\dv   w_i^{B,\textrm{diff}} = 0 \quad \text{in } \mathcal F, \\
	w_i^{B,\textrm{diff}}(x) &=    \widecheck Q_1^T  \varphi_i(\widecheck Q_1   x + \widecheck h_1  )  - \varphi_i(x) \quad \text{in }  \mathcal S_1, \\
	 w_i^{B,\textrm{diff}}(x)  &= 0 \quad \text{in } \cup_{ 2 \leq j \leq N} \mathcal S_j .
\end{align*}
By Theorem \ref{lem:slender.several} and \eqref{delta-c}, 
we have that 
\begin{align*}
	\|w_i^{B,\textrm{diff}}\|_{\dot H^1(\R^3)} &\leq C \delta |\log \eps|^{-1/2} \| \varphi_i\|_{W^{2,\infty} (\R^3)}.
\end{align*}
Thus 
we obtain that 
\begin{align}
\label{midi3}
	\|\tilde \psi_i -  \psi_i\|_{\dot H^1(\R^3)} \leq C \delta |\log \eps|^{-1/2}\| \varphi_i\|_{W^{2,\infty} (\R^3)} + C \|g_i\|_{L^2 (\mathcal F)} + C \|f_i\|_{L^{\frac 6 5 } (\mathcal F)}.
\end{align}

\ \par \

\noindent\emph{Step 7: Conclusion.} Gathering \eqref{midi1}, \eqref{midi2}, 
\eqref{midi3} and \eqref{midi4}, we arrive at \eqref{est:psi_1.psi_2}.
This finishes the proof of Proposition \ref{lem:shapeDerivatives}. 
\end{proof}

%%%%%%%%%%%%%%%%%%%%%%%%%%%%%%%%%%%%%%%%%%%%%%%%%%%%%%%%%%%%%%%

\section{Asymptotic behaviour of the filament centerlines}
\label{gathered-section}
This section is devoted to the proof of the part of Theorem \ref{main}  devoted to the asymptotic behaviour of the filament centerlines, that is to the proof of 
\eqref{est:main.positions}, 
\eqref{allezesti} and of 
  Theorem \ref{refinedqs}, which, as mentioned in the comments after Theorem \ref{main}, 
  provides a more precise approximation of the asymptotic behaviour of the filament velocities than the one in Theorem \ref{main}, at the expense of $\eps$-dependent positions. 
  This section is divided into three subsections. 
   
   First Subsection \ref{refor}  is devoted to reformulation of the Newton equations \eqref{intrauN} into 
 a system of second-order quasilinear ODEs on the $6N$ degrees of freedom of the rigid bodies, which does not involve the fluid pressure anymore and reveals the role played by the Stokes resistance matrices. 
 
 In Subsection   \ref{section-modu} we consider the time evolution of a modulated energy which measures, for each positive $\eps$, the difference between the filaments velocities for positive $\eps$ and the so-called   ``Fax\'en''  velocities, which are  given by the quasi-static balance of the Stokes resistance force and torque  
with the force and torque due to the background flow. The latter are a family 
of velocities which depend on the positions of the filaments velocities of $\eps$-thickness. 
  Unlike the total energy of the system  considered in \eqref{nrj}, this modulated energy has the advantage to circumvent the part of the energy corresponding to the motion  of the filaments, under the influence of the fluid.

Finally in Subsection \ref{section-cv-filament} we take advantage of the previous subsections to prove the part of 
Theorem \ref{main} which concerns the filaments. 
  
  %%%%%

  \subsection{Reformulation of the Newton equations}
\label{refor}

This subsection is devoted to reformulation of the Newton equations into a compressed form which does not involve the fluid pressure anymore, and reveals the role played by the Stokes resistance matrices.

We introduce first a few notations. Let us emphasize that all quantities here are defined with respect to the  filaments of $\eps$-thickness at time $t$.
Indeed the result below concerns 
the solutions
 $(h_{i,\eps} , Q_{i,\eps} )_{1 \leq i \leq N}$ to the  Newton-Stokes system  \eqref{N-S} up to the time $T^{\max}_\eps$ as given by Proposition \ref{CL}. 
\begin{itemize}
\item 
Let us first gather all the translation and rotation velocities corresponding to the motions of the $N$ filaments into the following vector of $\R^{6N}$: 
\begin{align} \label{rec2}
\textrm{Y}  :=
 \begin{pmatrix}
  \textrm{v}_i \\ \omega_i
    \end{pmatrix}_{1 \leq i \leq N}
= ( \textrm{Y}_{j,\beta })_{1 \leq j \leq N, \, 1 \leq \beta \leq 6}	 . 
\end{align}
\item 
Similarly let 
\begin{align} \label{beth}
	\textrm{Y}^\flat :=
		\begin{pmatrix}
		 \textrm{v}^\flat_i \\
		 \omega^\flat_i
		\end{pmatrix}_{1 \leq i \leq N} 
			\quad \text{
		 such that } \quad 
\textrm{Y}^\flat 	 = \mathcal K^{-1}\mathfrak{f}^\flat  ,
\end{align}
 recalling that  $\mathcal K$ is the $6N \times 6N$ matrix defined in  \eqref{resis} and that $\mathfrak{f}^\flat$ is defined by \eqref{clour}.
These are the  so-called ``Fax\'en'' velocities. Let us observe that 
 it follows from \eqref{eq:K.coercive} and
\eqref{ofint} that
\begin{align} \label{est:Y}
    |\textrm{Y}^\flat| \leq C
\end{align}
 with a constant depending only on $\dmin$ (see \eqref{dmin}) for all $\eps$ sufficiently small.
 We will also use the following notations, where on the one hand translation velocities are gathered, and on the other hand rotation velocities are gathered: 
\begin{align} \label{fax-defv}
\textrm{v}^\flat := 	(\textrm{v}^\flat_i)_{1 \leq i \leq N}  \quad \text{ and }  \quad  \omega^\flat := (\omega^\flat_i)_{1 \leq i \leq N}.
\end{align}
\item Let 
\begin{align}
\mathfrak{f}^a
	:= 
	\begin{pmatrix}
	F^a \\
	T^a
	\end{pmatrix} ,
\end{align}
where 
\begin{align*} 
 F^a:= 	(F^a_i)_{1 \leq i \leq N}  \quad \text{ and }  \quad 	T^a := (T^a_i)_{1 \leq i \leq N} ,
\end{align*}
with for $ 1 \leq i \leq N$, 
\begin{gather*}
 F^a_i  := \int_{ \partial \mathcal S_i } \Sigma(u^\flat , p^\flat ) n \, d\mathcal{H}^2    \quad \text{
		 and } \quad   T^a_i :=        \int_{ \partial   \mathcal S_i}  (x - h_{i,\eps}) \wedge \Sigma(u^\flat , p^\flat )     n \, d\mathcal{H}^2     .
\end{gather*}
The choice of the index ``$a$" is for Archimedes, because, as one proceeds in the usual computation of gravity buoyancy, see \cite[(4.18)]{Galdi2} or
\cite[p105]{serre-japan}, 
one may use integration by parts inside the filaments to arrive at 
\begin{gather}
\label{archim}
 F^a_i  
 =   \int_{ \mathcal S_i } (\Delta u^\flat + \nabla p^\flat )  \, dx      
  \quad \text{
		 and } \quad   T^a_i
   =   \int_{ \mathcal S_i } (x - h_{i,\eps}) \wedge (\Delta u^\flat + \nabla p^\flat )  \, dx     .
\end{gather}
\item Finally let us gather the inertia of the $N$ filaments into  the $6N \times 6N$ block diagonal matrix $\mathcal{M}$ whose $6 \times 6$ blocks are
\begin{align}
\label{domi}
\mathcal{M} := (m_i \mathrm{Id}_3 , \mathcal{J}_i )_{ 1 \leq i \leq N} . \end{align}
\end{itemize}
We can now state the main result of this subsection. 
 \begin{Proposition}\label{reformul}
 As long as the filaments are separated, 
 the Newton equations  \eqref{eq:v'}-\eqref{eq:omega'} are equivalent to the following compressed form: 
\begin{align}
\label{compressed}
	\eps^2 \frac{d}{dt} ( \mathcal{M} \Y)
	=-	\mathcal K ( \Y - \Y^{\flat} ) + \mathfrak{f}^a
 ,
\end{align}
\end{Proposition}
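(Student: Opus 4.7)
The plan is to perform a purely algebraic reduction of \eqref{eq:v'}--\eqref{eq:omega'} to \eqref{compressed} by testing against the elementary rigid fields $v_{j,\beta}[h_{j,\eps}]$ from \eqref{t1.6} for each $1\leq j\leq N$ and $1\leq\beta\leq 6$. Unfolding the notations \eqref{rec2} and \eqref{domi}, the $(j,\beta)$-component of $\eps^2\frac{d}{dt}(\mathcal M\Y)$ is precisely $\eps^2 m_j\textrm{v}_j'\cdot e_\beta$ for $\beta\in\{1,2,3\}$ and $\eps^2(\mathcal J_j\omega_j)'\cdot e_{\beta-3}$ for $\beta\in\{4,5,6\}$. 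By linearity of $\Sigma$, the right hand sides of \eqref{eq:v'}--\eqref{eq:omega'} then split into a background-flow piece, which is by construction the $(j,\beta)$-component of $\mathfrak f^a$, and a perturbation-flow piece that I handle next.

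The key step is a Stokes reciprocity identity applied to $(u^\mathfrak p,p^\mathfrak p)$ and to the solutions $(V_{j,\beta},P_{j,\beta})$ of \eqref{sti}. Both pairs solve the homogeneous Stokes system in $\mathcal F$ and lie in $\dot H^1(\R^3)$, so integrating by parts twice (boundary contributions at infinity vanish by the $\dot H^1$ decay, pressure terms drop by incompressibility) yields
\begin{align*}
\int_{\cup_k\partial\mathcal S_k}\Sigma(u^\mathfrak p,p^\mathfrak p) n\cdot V_{j,\beta}\,d\mathcal H^2
=2\int_{\mathcal F}D(u^\mathfrak p):D(V_{j,\beta})
=\int_{\cup_k\partial\mathcal S_k}\Sigma(V_{j,\beta},P_{j,\beta}) n\cdot u^\mathfrak p\,d\mathcal H^2.
\end{align*}
The boundary condition \eqref{sti3} collapses the leftmost integral to the single term $\int_{\partial\mathcal S_j}\Sigma(u^\mathfrak p,p^\mathfrak p) n\cdot v_{j,\beta}\,d\mathcal H^2$; on the right, \eqref{bc-u} substitutes $u^\mathfrak p=v^{\mathcal S_k}-u^\flat$ on each $\partial\mathcal S_k$, and the rigid expansion $v^{\mathcal S_k}=\sum_\alpha\Y_{k,\alpha}v_{k,\alpha}[h_{k,\eps}]$ combined with the definitions \eqref{resis} of $\mathcal K$ and \eqref{fab1}--\eqref{fab2} of $\mathfrak f^\flat$ identifies this right hand side with $(\mathcal K\Y)_{j,\beta}-\mathfrak f^\flat_{j,\beta}$.

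To close, I substitute $\mathfrak f^\flat=\mathcal K\Y^\flat$ from the very definition \eqref{beth} of $\Y^\flat$, which turns the perturbation contribution into $-(\mathcal K(\Y-\Y^\flat))_{j,\beta}$; assembling the background and perturbation contributions across $(j,\beta)$ reproduces \eqref{compressed}, and the calculation is clearly reversible. I do not anticipate any genuine obstacle: the computation is essentially algebraic, the only analytic input is the legitimacy of the integrations by parts, which is granted by the $\dot H^1$ framework and the absence of collisions, and the symmetry $\mathcal K_{j,\beta,k,\alpha}=\mathcal K_{k,\alpha,j,\beta}$ used along the way is itself a direct consequence of the same reciprocity identity applied to the pair $(V_{j,\beta},V_{k,\alpha})$.
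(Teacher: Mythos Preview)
Your proof is correct and follows essentially the same route as the paper's: split the total stress into background and perturbation parts, apply Lorentz reciprocity to $(u^\mathfrak{p},p^\mathfrak{p})$ and $(V_{j,\beta},P_{j,\beta})$, use the boundary conditions \eqref{sti3} and \eqref{bc-u} to identify $\mathcal K$ and $\mathfrak f^\flat$, and then invoke \eqref{beth}. The only remark is to be careful with the global minus sign coming from \eqref{eq:v'}--\eqref{eq:omega'} when you match the background piece to $\mathfrak f^a$; this is pure bookkeeping and does not affect the argument.
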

Let us  highlight that  $\mathcal{M}$, $\mathcal K$, $\textrm{Y}^\flat $ and $\mathfrak{f}^a$
 depend on the positions of the filaments, that is to 
 the solution $\textrm{Y}$  through its time antiderivative, so that the ODE \eqref{compressed} is quasilinear. 

\begin{proof}
Since the left hand sides of \eqref{eq:v'}-\eqref{eq:omega'} clearly correspond to 
the left hand side of \eqref{compressed} according to the definitions \eqref{rec2} and \eqref{domi}, it is sufficient to 
 consider the right hand sides of \eqref{eq:v'}-\eqref{eq:omega'}, which we decompose into
\begin{subequations}
 \label{NN}
\begin{gather}
 \int_{ \partial \mathcal S_i } \Sigma(u,p) n \, d\mathcal{H}^2  
 =  F_i^a +\int_{ \partial \mathcal S_i } \Sigma (u^\mathfrak{p},   p^\mathfrak{p} ) n \, d\mathcal{H}^2 
  ,  \\
   \int_{ \partial   \mathcal S_i}   (x - h_{i,\eps}) \wedge   \Sigma(u,p) n \, d\mathcal{H}^2  
   = T_i^a    +
     \int_{ \partial   \mathcal S_i}   (x - h_{i,\eps}) \wedge  \Sigma (u^\mathfrak{p},   p^\mathfrak{p} )  n \, d\mathcal{H}^2    .
\end{gather}
\end{subequations}
The second terms in the right hand sides of  \eqref{NN} can be computed as follows. 
For $1 \leq i \leq N$, by  \eqref{sti3}, 
\begin{align*}
\begin{pmatrix}
 \int_{ \partial \mathcal S_i } \Sigma(u^\mathfrak{p} , p^\mathfrak{p} ) n \, d\mathcal{H}^2  \\
  \int_{ \partial   \mathcal S_i}  (x - h_{i,\eps}) \wedge  \Sigma(u^\mathfrak{p} , p^\mathfrak{p} )   n  \, d\mathcal{H}^2 
  \end{pmatrix}
  &= 
\Big(
\int_{ \cup_{j=1}^N \partial  \mathcal   S_j  }
\Sigma(u^\mathfrak{p} , p^\mathfrak{p} ) n \cdot V_{i,\alpha} \, d\mathcal{H}^2 
\Big)_{1 \leq \alpha \leq 6}
\\ &= 
\Big(
\int_{ \cup_{j=1}^N \partial  \mathcal   S_j  } \Sigma(V_{i,\alpha}  , P_{i,\alpha} ) n \cdot u^\mathfrak{p} \, d\mathcal{H}^2 
\Big)_{1 \leq \alpha \leq 6} ,
\end{align*}
by Lorentz's reciprocity theorem, using that $u^\mathfrak{p}$ and $V_{i,\alpha}$ are both solutions of the steady Stokes system in $ \mathcal F$, see \eqref{ssys1} and \eqref{sti}.
Then, using the boundary condition \eqref{bc-u}-\eqref{eq:v^S} and the  definition \eqref{t1.6} of $v_{j,\beta}$, we have that in $ \mathcal{S}_j$, 
\begin{equation}  \label{rec1}
u^\mathfrak{p}  = \biggl(\sum_{1\leq \beta \leq 6} \textrm{Y}_{j,\beta }\,  v_{j,\beta}\biggr) -  u^\flat .
\end{equation}
We deduce that, for $1 \leq \alpha \leq 6$, 
\begin{align*}
\int_{ \cup_{j=1}^N \partial  \mathcal   S_j  }
\Sigma(u^\mathfrak{p} , p^\mathfrak{p} ) n \cdot V_{i,\alpha} \, d\mathcal{H}^2 
 &= 
\sum_{1 \leq j \leq N} \sum_{1\leq \beta \leq 6} \textrm{Y}_{j,\beta } \int_{  \partial  \mathcal   S_j  }
\Sigma(V_{i,\alpha}  , P_{i,\alpha} ) n \cdot v_{j,\beta} \, d\mathcal{H}^2 
\\ &\quad-
\int_{ \cup_{j=1}^N \partial  \mathcal   S_j  } \Sigma(V_{i,\alpha}  , P_{i,\alpha} ) n \cdot u^\flat \, d\mathcal{H}^2   .
\end{align*}
Thus, 
\begin{align}
\label{reci}
\biggl(\begin{pmatrix}
 \int_{ \partial \mathcal S_i } \Sigma(u^\mathfrak{p} , p^\mathfrak{p} ) n \, d\mathcal{H}^2  \\
  \int_{ \partial   \mathcal S_i}  (x - h_{i,\eps}) \wedge   \Sigma(u^\mathfrak{p} , p^\mathfrak{p} )  n  \, d\mathcal{H}^2 
  \end{pmatrix}\biggr)_{1 \leq i \leq N}
  = 
 - \mathcal K \textrm{Y}
    + \mathfrak{f}^\flat .
  \end{align}
Thus combining \eqref{reci}, \eqref{beth} and  \eqref{NN}
 we find \eqref{compressed}. 
\end{proof}

One difficulty associated with the equation \eqref{compressed}
is  the  factor $\eps^2$ in front of the left hand side which makes the asymptotic analysis of this ordinary differential system belong to the class of singular perturbations, i.e. degeneracy at the main order. 
However the matrix $\mathcal K$ is positive definite  symmetric which guarantees that the effect of the associated term is to damp the velocities when time proceeds, or more exactly that they relax to the Fax\'en velocities. 
Indeed to tackle the asymptotic behaviour of the solutions to \eqref{compressed}
 one key point is the behaviour of Stokes' resistance matrix $\mathcal K$ with respect to $\eps$, that is to quantify the damping effect in the limit of zero thickness.

  %%%%

%%%%%%%%%%%%%%%%%%%%%%%%%%%%%%%%%%%%

\subsection{Modulated  energy and lifetime}
\label{section-modu}

To estimate the relaxation of the exact solution $Y$ of \eqref{compressed}
to the time-dependent vector $\textrm{Y}^\flat$ 
 for small $\eps$, we consider the modulated energy:
\begin{align}
 E := \frac 1 2 ( \textrm{Y} - \textrm{Y}^\flat ) \cdot \mathcal{M}  (\textrm{Y} - \textrm{Y}^\flat ) .
\end{align}
Thanks to the assumptions on the inertia of the filaments in 
Section
\ref{sec-setting}, the matrix $\mathcal{M}$ defined in \eqref{domi} is symmetric positive definite uniformly in $\eps$, and it is also 
uniformly bounded. Thus the modulated energy $E$ is $\eps$-uniformly equivalent to $\vert \textrm{Y} - \textrm{Y}^\flat \vert^2$.

As mentioned in Proposition \ref{CL}, for each $\eps$ there is a positive time interval during which the filaments remain separated. 
Below we will perform some computations
which are valid until the time 
 that the filaments remain well separated uniformly with respect to $\eps$.  By a bootstrap argument in Section \ref{section-cv-filament} we then derive uniform estimates of this time with respect to  $\eps$ and show that it extends until $\hat T$ in the sense of Theorem \ref{main}.
 
More precisely, for $d > 0$, we define
\begin{align}\label{def.Tepsd}
    T_{\eps,d} := \inf \left\{ t \geq 0 : \dmin(t) > d, Z \leq \frac {c_d}{C_d \eps^2 |\log \eps|} \right\},
\end{align}
where $Z:=\sqrt E$ and $\dmin$ is the minimal distance between the centerlines as defined in \eqref{dmin}.
Since $\min_{i \neq j} \dist(\mathcal S_i,\mathcal S_j) \geq \dmin - C \eps$, Proposition \ref{CL} 
implies that for $\eps \leq \eps_0(d)$ we have $T_{\eps,d} < T_\eps^{\max}$ and thus that the dynamics is well-posed on $(0,T_{\eps,d})$.  
Note that $Z = \sqrt E \leq C | \textrm{Y} - \textrm{Y}^\flat  |$.
Thus,  since $Z$ is continuous, decreasing the value of $\eps_0(d)$ if necessary, we have for all $\eps\in(0,\eps_0)$ that $T_{\eps,d} > 0$. 
In the following we consider only $t < T_{\eps,d}$.

 \begin{Proposition}\label{p}
For all $d>0$, there exists $C(d)>0$ and $\eps_0(d)>0$ independent of $\eps$ such that  for all $\eps\in(0,\eps_0)$,
 for all $t \in (0,T_{\eps,d})$,  
\begin{align} \label{est:Z}
 | \Y	(t) - {\Y^\flat} 	(t)|  \leq | \Y (0) - {\Y^\flat} (0)|  e^{-  \frac{c_d t}{\eps^2 |\log \eps|}} + C_d \eps^2 |\log \eps|.
\end{align}
\end{Proposition}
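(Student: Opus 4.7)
The natural starting point is to differentiate the modulated energy $E = \tfrac12 W^T \mathcal{M} W$ where $W := \Y - \Y^\flat$ and use the compressed form \eqref{compressed} of the Newton equations. Writing $\eps^2 \mathcal{M}\Y' = -\mathcal{K}W + \mathfrak{f}^a - \eps^2 \mathcal{M}'\Y$ and subtracting $\eps^2 \mathcal{M} {\Y^\flat}'$, one gets
\[
\eps^2 E' = -W^T \mathcal{K} W + W^T \mathfrak{f}^a - \eps^2 W^T \mathcal{M}' \Y - \eps^2 W^T \mathcal{M} {\Y^\flat}' + \tfrac{\eps^2}{2} W^T \mathcal{M}' W.
\]
The first term on the right gives, by the coercivity \eqref{eq:K.coercive}, the essential damping $W^T \mathcal{K} W \geq c|\log\eps|^{-1}|W|^2$, which will drive the exponential relaxation.

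The plan is then to show that every remaining term on the right is small enough that, after division by $\eps^2$, a linear Gronwall argument closes. For $\mathfrak{f}^a$ I use the ``Archimedean'' volume expression \eqref{archim} combined with the regularity \eqref{regover} of $u^\flat, p^\flat$ and the volume bound $|\mathcal{S}_i| \leq C\eps^2$ to conclude $|\mathfrak{f}^a| \leq C\eps^2$. The shape derivative $\mathcal{M}'$ enters only through Sylvester's identity \eqref{eq:Sylvester}, giving $|\mathcal{M}'| \leq C|\Y|$. The delicate term is ${\Y^\flat}' = (\mathcal{K}^{-1})'\mathfrak{f}^\flat + \mathcal{K}^{-1}(\mathfrak{f}^\flat)'$: writing $\mathcal{K}_{i,\alpha,j,\beta}$ as an inner product $2(DV_{i,\alpha},DV_{j,\beta})_{L^2(\mathcal{F})}$ via reciprocity, and representing $\mathfrak{f}^\flat$ analogously by replacing $u^\flat$ on the filaments by its Stokes extension, I apply Proposition \ref{lem:shapeDerivatives} to obtain $|\nabla_{h,Q}\mathcal{K}| + |\nabla_{h,Q}\mathfrak{f}^\flat| \leq C|\log\eps|^{-1}$. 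Combined with $|\mathcal{K}^{-1}| \leq C|\log\eps|$ (from \eqref{eq:K.coercive}), $|\mathfrak{f}^\flat| \leq C|\log\eps|^{-1}$ (from \eqref{ofint}), and the uniform bound $|\Y^\flat|\leq C$ from \eqref{est:Y}, this yields $|{\Y^\flat}'| \leq C(1+|\Y|)$.

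Gathering all estimates and using $|\Y| \leq |W| + C$ produces
\[
\eps^2 E' \leq -\tfrac{c}{|\log\eps|}|W|^2 + C\eps^2|W| + C\eps^2|W|^2(1 + |W|) + C\eps^2|W|^3.
\]
Here the bootstrap hypothesis built into the definition \eqref{def.Tepsd} of $T_{\eps,d}$, namely $Z \leq c_d/(C_d \eps^2|\log\eps|)$, exactly ensures that $\eps^2|\log\eps||W| \leq c_d/C_d$, so the cubic and quadratic corrections $C\eps^2 |W|^3 + C\eps^2 |W|^2$ can be absorbed into $\tfrac{c}{2|\log\eps|}|W|^2$ by choosing $c_d/C_d$ small. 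After this absorption, dividing by $Z \sim |W|$ leaves the linear differential inequality $\eps^2 Z' \leq -\tfrac{c}{2|\log\eps|}Z + C\eps^2$, which upon Gronwall yields the announced bound \eqref{est:Z} with $Z(t) \leq Z(0)e^{-ct/(\eps^2|\log\eps|)} + C\eps^2|\log\eps|$.

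The main obstacle is the bookkeeping of the $\eps$-dependence in the coefficients $\mathcal{K}$, $\mathfrak{f}^\flat$ and of their time derivatives along the trajectory: it is precisely the combination $|\mathcal{K}^{-1}|\cdot|\nabla_{h,Q}\mathcal{K}| \sim 1$ that makes $|{\Y^\flat}'|$ bounded (not blowing up in $\eps$), which in turn makes the whole modulated-energy argument close. This critically relies on the uniform shape-derivative estimate of Proposition \ref{lem:shapeDerivatives}, whose proof via the transport–Bogovski\u{\i} construction absorbed the factor $|\log\eps|^{-1}$ one needs here.
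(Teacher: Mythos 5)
Your proposal is correct and follows essentially the same route as the paper: the same modulated-energy identity obtained by pairing the compressed Newton equations \eqref{compressed} with $\Y-\Y^\flat$, the same ingredients (coercivity \eqref{eq:K.coercive} of $\mathcal K$ for the damping, the Archimedean bound $|\mathfrak f^a|\leq C\eps^2$, $|\mathcal M'|\leq C|\Y|$ via Sylvester, and the drift bound $|(\Y^\flat)'|\leq C(1+|\Y|)$ proved through Proposition \ref{lem:shapeDerivatives} exactly as in Lemma \ref{lemma-drift}), and the same bootstrap absorption of the superlinear terms using the definition \eqref{def.Tepsd} of $T_{\eps,d}$ followed by Gronwall on $Z=\sqrt E$. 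No substantive difference from the paper's argument.
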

To prove Proposition \ref{p} we will use the following lemma. 
 \begin{Lemma}\label{lemma-drift}
For all $d>0$, there exists $C(d)>0$ and $\eps_0(d)>0$ independent of $\eps$ such that if the minimal distance $d_{min}$ between the centerlines satisfies $d_{min}\geq d$ and $\eps\in(0,\eps_0)$, we have the following estimate:
\begin{align} \label{est:s..2}
|	(\Y^\flat)'| &\leq C(1 + |\Y|)  .
\end{align}
\end{Lemma}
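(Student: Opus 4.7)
The plan is to differentiate the defining relation $\Y^\flat = \mathcal K^{-1}\mathfrak{f}^\flat$ from \eqref{beth} with respect to time, which yields
\begin{equation*}
(\Y^\flat)' = -\mathcal K^{-1}\, \mathcal K'\, \Y^\flat + \mathcal K^{-1} (\mathfrak{f}^\flat)'.
\end{equation*}
The operator bound $|\mathcal K^{-1}| \leq C|\log \eps|$ is at hand from \eqref{eq:K.coercive} and the bound $|\Y^\flat| \leq C$ from \eqref{est:Y}. Thus \eqref{est:s..2} will follow once we establish, for $\dmin \geq d$ and $\eps \leq \eps_0(d)$,
\begin{equation*}
|\mathcal K'| \leq C|\log \eps|^{-1}(1+|\Y|)\qquad\text{and}\qquad |(\mathfrak{f}^\flat)'| \leq C|\log \eps|^{-1}(1+|\Y|). \tag{$\ast$}
\end{equation*}

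To prove $(\ast)$, I would recast each entry of $\mathcal K$ and each component of $\mathfrak{f}^\flat$ as a Stokes interaction energy on $\mathcal F$, so that Proposition \ref{lem:shapeDerivatives} applies. Using Lorentz reciprocity, the boundary conditions of \eqref{sti}, and an integration by parts, one obtains
\begin{equation*}
\mathcal K_{i,\alpha,j,\beta} = 2\bigl(D(V_{i,\alpha}),D(V_{j,\beta})\bigr)_{L^2(\mathcal F)},
\end{equation*}
and analogously $F^\flat_{i,\alpha} = 2(D(V_{i,\alpha}),D(V^\flat))_{L^2(\mathcal F)}$ together with $T^\flat_{i,\alpha} = 2(D(V_{i,\alpha+3}),D(V^\flat))_{L^2(\mathcal F)}$, where $V^\flat \in \dot H^1(\mathcal F)$ is the Stokes solution with trace $u^\flat$ on $\partial \mathcal F$. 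Taking divergence-free $W^{2,\infty}(\R^3)$ extensions of the rigid velocity fields $v_{i,\alpha}$ (by truncation far from the filaments, which is admissible since the filaments stay in a bounded region with bounds uniform in the admissible $(h,Q)$) and extending $u^\flat$ by \eqref{regover}, Proposition \ref{lem:shapeDerivatives} yields the uniform shape-derivative bound
\begin{equation*}
|\nabla_{h,Q} \mathcal K| + |\nabla_{h,Q} \mathfrak{f}^\flat| \leq C|\log \eps|^{-1}.
\end{equation*}
Combined with $|\partial_t(h_{i,\eps},Q_{i,\eps})| \leq C|\Y|$ from \eqref{e.hi}--\eqref{e.Qi}, the chain rule delivers the $|\Y|$-factor in both estimates of $(\ast)$, at least for the ``shape part'' of $(\mathfrak{f}^\flat)'$.

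The remaining contribution is the explicit time dependence of $\mathfrak{f}^\flat$ through $u^\flat$. Using the interaction-energy representation, this piece equals $2(D(V_{i,\alpha}),D(\partial_t V^\flat))_{L^2(\mathcal F)}$ for forces and the analogue for torques, where $\partial_t V^\flat$ is the Stokes solution in $\mathcal F$ with boundary datum $\partial_t u^\flat$. By Cauchy--Schwarz and two applications of Theorem \ref{lem:slender.several}, using that $\|\partial_t u^\flat\|_{W^{1,\infty}}$ is controlled by \eqref{regover}, this quantity is bounded by $C|\log \eps|^{-1}$, delivering the explicit part of $(\ast)$ and completing the argument. The main technical hurdle I foresee is a clean implementation of the shape derivative when the boundary data involve the rotational fields $v_{i,\alpha+3}[h_i]$, which themselves depend on $h_i$ and must be tracked through the differentiation; once this is absorbed, the $|\log \eps|^{-1}$ gain from Proposition \ref{lem:shapeDerivatives} exactly compensates the $|\log \eps|$ growth of $\mathcal K^{-1}$, giving an $\eps$-uniform estimate.
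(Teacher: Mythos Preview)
Your proposal is correct and follows essentially the same approach as the paper: differentiate $\Y^\flat = \mathcal K^{-1}\mathfrak f^\flat$, represent the entries of $\mathcal K$ and $\mathfrak f^\flat$ as Stokes interaction energies $(D(\psi_1),D(\psi_2))_{L^2(\mathcal F)}$, and then combine Proposition~\ref{lem:shapeDerivatives} for the shape part, Theorem~\ref{lem:slender.several} for the explicit $\partial_t u^\flat$ contribution, and the chain rule $|\partial_t(h,Q)|\leq C|\Y|$. The paper organizes these contributions through an auxiliary operator $G(\varphi_1,\varphi_2):=(D(\psi_1),D(\psi_2))_{L^2(\mathcal F)}$ and writes the time derivative as $\Y\cdot\nabla_{h,Q}G(v_{i,\alpha},u^\flat)+G(\Y\cdot\nabla_{h,Q}v_{i,\alpha},u^\flat)+G(v_{i,\alpha},\partial_t u^\flat)$, which is exactly your decomposition; the ``technical hurdle'' you anticipate about the $h_i$-dependence of $v_{i,\alpha+3}$ is precisely the middle term and is handled by one more application of the $|G|\leq C|\log\eps|^{-1}\|\cdot\|_{W^{1,\infty}}\|\cdot\|_{W^{1,\infty}}$ bound. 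One small remark: your truncation to obtain global divergence-free $W^{2,\infty}$ extensions of the rigid fields is slightly more careful than the paper (which applies Proposition~\ref{lem:shapeDerivatives} directly to the $v_{i,\alpha}$), but this is harmless since the proof of that proposition only uses the $W^{2,\infty}$ norm locally near the filaments, where the rigid fields are uniformly controlled.
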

\begin{proof}[Proof of Lemma \ref{lemma-drift}]
First, by \eqref{beth}, 
\begin{align}
(\textrm{Y}^\flat)'
 &=  - \mathcal K^{-1} \mathcal K' \mathcal K^{-1} \, \mathfrak{f}^\flat 	 + \mathcal K^{-1} \,  (\mathfrak{f}^\flat )' .
\end{align}
Let $\bar u^\flat$  the solution to 
\begin{align*}
		- \Delta \bar u^\flat + \nabla \bar p^\flat &= 0  \quad \text{ and }\quad \div {\bar u^\flat} = 0 \quad \text{in } \mathcal F \\
		\bar u^\flat &= u^\flat \quad \text{ in }  \cup_{1 \leq i \leq N} \mathcal S_i,
\end{align*}
and let 
$$\bv := (V_{i,\alpha})_{1\leq i \leq N, 1 \leq \alpha \leq 6},$$
where  we recall that the $ V_{i,\alpha}$  are the  unique solutions to the steady Stokes equations associated with the rigid velocities in the filaments, see  \eqref{sti}. 
Recalling the definition of $ \mathfrak{f}^\flat $ in  \eqref{clour}-\eqref{fab3}-\eqref{fab1}-\eqref{fab2} 
we obtain by an integration by parts that 
\begin{equation*} 
 \mathfrak{f}^\flat  =  \Big( (D (V_{i,\alpha} ),D (\bar u^\flat))_{L^2(\mathcal F)} \Big)_{1\leq i \leq N, 1 \leq \alpha \leq 6} ,
\end{equation*}
where we recall that  the notation $\big(\cdot , \cdot\big)_{L^2(\mathcal F)}$ stands for the inner product in $L^2(\mathcal F)$.
By \eqref{eq:K.coercive} and \eqref{clour}, 
we deduce that 
\begin{align} 
\begin{aligned}
	|(\textrm{Y}^\flat)'| &\leq C |\log \eps|^2 |\mathcal K'| \| \bar u^\flat\|_{\dot H^1(\mathcal F)} \|\bv\|_{\dot H^1(\mathcal F)}
	+ C |\log \eps|  |(D (\bv), D (\bar u^\flat))'_{L^2(\mathcal F)}|.
\end{aligned}
\end{align}
Moreover, by \eqref{resis} and an integration by parts, we have that 
\begin{equation*} 
 \mathcal K =  \Big( (D (V_{i,\alpha} ),D (V_{j,\beta} ))_{L^2(\mathcal F)} \Big)_{1\leq i,j \leq N, 1 \leq \alpha,\beta \leq 6} .
\end{equation*}
Thus, with the convention that the terms containing $\bv$ are the sum for $1\leq i \leq N$ and $ 1 \leq \alpha \leq 6$, of  corresponding terms  for $V_{i,\alpha}$, 
and by Theorem \ref{lem:slender.several}, we have  
$$\|\bv \|_{\dot H^1(\mathcal F)}
+ \| \bar u^\flat \|_{\dot H^1(\mathcal F)}
\leq C |\log \eps|^{-\frac12},$$
so that we arrive at 
\begin{align} \label{est:s..}
	|(\textrm{Y}^\flat)'| &\leq C |\log \eps| \left(   {\big|\big(D (\bv),  
D (\bv)\big)_{L^2(\mathcal F)}'\big|}
+  |\big(D( \bv), D (\bar u^\flat)\big)'_{L^2(\mathcal F)}|
\right).
\end{align}

We decompose the time derivative in the terms on the right hand side of \eqref{est:s..} into several contributions.
To this end, we introduce the operator 
$$G \colon  W^{2,\infty}_\sigma(\R^3) \times W^{2,\infty}_\sigma(\R^3) \to \R,$$
defined by 
\begin{align*}
    G(\varphi_1,\varphi_2) :=  (D (\psi_1),D (\psi_2))_{L^2(\mathcal F)},
\end{align*}
	where  $\psi_k$, for $k=1,2$, is the solution in 	$ \dot H^1(\mathcal F)$ to
the problem
\begin{align*} 
	\begin{aligned}
		- \Delta \psi_k + \nabla p_k &= 0 , \quad \div \psi_k = 0 \quad \text{in } \mathcal F , \\
		\psi_k &= \varphi_i  \quad \text{in } \cup_i \mathcal S_i . 
	\end{aligned}
\end{align*}
Then, for any $ 1\leq i \leq N$, for any $ 1 \leq \alpha \leq 6$,
$$(D (V_{i,\alpha}), D( \bar u^\flat))_{L^2(\mathcal F)} = G(v_{i,\alpha},u^\flat).$$
Note that the operator $G$ as well as $v_{i,\alpha}$ implicitly depends on the positions and orientations of the particles. Consequently,
\begin{align*}
    (D (V_{i,\alpha}), D( \bar u^\flat))'_{L^2(\mathcal F)} = \textrm{Y} \cdot \nabla_{h,Q} G(v_{i,\alpha},u^\flat) + G(\textrm{Y} \cdot \nabla_{h,Q} v_{i,\alpha}, u^\flat) + G(v_{i,\alpha}, \partial_t u^\flat).
\end{align*}

Theorem \ref{lem:slender.several} implies 
\begin{align*}
    |G(\varphi_1,\varphi_2)| \leq C |\log \eps|^{-1} \|\varphi_1\|_{W^{1,\infty} } \|\varphi_2\|_{W^{1,\infty}}.
\end{align*}
Thus, taking into account the regularity assumptions on the background flow $u^\flat$,  see \eqref{regover}, we arrive at
\begin{align*}
     |(D (V_{i,\alpha}), D (\bar u^\flat))'_{L^2(\mathcal F)}| \leq  |\textrm{Y}| |\nabla_{h,Q} G(v_{i,\alpha},u^\flat)| +  C |\log \eps|^{-1} (1+ |\textrm{Y}|).
\end{align*}
Analogous considerations hold for the term ${\big|\big(D (\bv),  
D (\bv)\big)_{L^2(\mathcal F)}'\big|}$. Proposition \ref{lem:shapeDerivatives} applied  to both
$(V_{i,\alpha},\bar u^\flat)$ and to $(V_{i,\alpha},V_{j,\beta})$ leads to the result. 
\end{proof}

With the result of  Lemma \ref{lemma-drift} in hands we can now start the proof of Proposition \ref{p}.
%%%%
\begin{proof}[Proof of  Proposition \ref{p}]
We first recast \eqref{compressed} as 
\begin{align*}
	\eps^2  ( \mathcal{M} (\textrm{Y}- \textrm{Y}^\flat ))'
	= 
-	\mathcal K ( \textrm{Y} - \textrm{Y}^\flat ) + \mathfrak{f}^a
- \eps^2  ( \mathcal{M}  \textrm{Y}^\flat )' ,
\end{align*}
and then take the inner product with $\textrm{Y}- \textrm{Y}^\flat $, with the observation that 
\begin{align*}
   ( \textrm{Y} - \textrm{Y}^\flat ) \cdot \big( \mathcal{M}  (\textrm{Y} - \textrm{Y}^\flat )  \big)' =  E'
 +  \frac 1 2 ( \textrm{Y} - \textrm{Y}^\flat ) \cdot \mathcal{M}'  (\textrm{Y} - \textrm{Y}^\flat ) ,
\end{align*}
so that 
\begin{align*}
	 E' 
	&= 
		-{\eps^{-2}\, } 
	(\textrm{Y} - \textrm{Y}^\flat )
	\cdot \mathcal K (\textrm{Y} - \textrm{Y}^\flat  )
	+ {\eps^{-2}\, } 
	(\textrm{Y} - \textrm{Y}^\flat  )
	 \cdot  \mathfrak{f}^a 
\\ 
	&\quad-  ( \mathcal{M}  \textrm{Y}^\flat )' \cdot  (\textrm{Y} - \textrm{Y}^\flat ) 
-  \frac 1 2 ( \textrm{Y} - \textrm{Y}^\flat ) \cdot \mathcal{M}'  (\textrm{Y} - \textrm{Y}^\flat )  .
\end{align*}
Recalling the definition of $\mathcal{M}$ in \eqref{domi}, 
we arrive at 
 the following formula for the time derivative $E'$ of the  modulated energy: 
\begin{align}
\label{timevol}
\begin{split}
E' &= 	-{\eps^{-2}\, } 
	(\textrm{Y} - \textrm{Y}^\flat )
	\cdot \mathcal K (\textrm{Y} - \textrm{Y}^\flat  )
	+ {\eps^{-2}\, } 
	(\textrm{Y} - \textrm{Y}^\flat  )
	 \cdot  \mathfrak{f}^a 
	\\ 
	&\quad- (\textrm{Y} - \textrm{Y}^\flat  ) \cdot
\mathcal{M} (\textrm{Y}^\flat)' -  (\omega - \omega^\flat) \cdot \mathcal J' \omega^\flat 
	- \frac 1 2 (\omega - \omega^\flat) \cdot \mathcal J' (\omega - \omega^\flat) 	,
\end{split}
\end{align}
where $\mathcal{J}$ is the $3N \times 3N$ block diagonal matrix whose $3 \times 3$ blocks are
 $ \mathcal{J}_i $ for $1 \leq i \leq N$.

By Corollary \ref{prop:asymptotic.resistance}, 
the first term of the right hand side of \eqref{timevol} can be bounded by $ - c {\eps^{-2}\, } |\log \eps|^{-1}\, E $ for some constant $c$ which is positive and uniform with respect to $\eps$, and will possibly change from line to line, while still satisfying  these properties.

By 
\eqref{archim}, since the background flow is assumed to be smooth, the term $\mathfrak{f}^a $ can be bounded by ${\eps^{2} }$. Therefore the second term of the right hand side  of \eqref{timevol} can be bounded by 
$C  \sqrt E $, where the constant $C$  is also positive and uniform with respect to $\eps$, and will also possibly change from line to line, while still satisfying  these properties.

Similarly the last three terms of the right hand side of \eqref{timevol} can be respectively bounded by 
 $C \sqrt E  |	(\textrm{Y}^\flat)' |$, $C \sqrt E |\mathcal J'|$ and  $ C E |\mathcal J'|$.

Thus 
\begin{align} \label{est:E}
	E'  + c {\eps^{-2}} {|\log \eps|^{-1}\,} E \leq C \sqrt E \left(1 + |	(\textrm{Y}^\flat)' | + |\mathcal J'|
	\right) + C  E  |\mathcal J'| .	
\end{align}
Regarding $\mathcal J'$,  we use \eqref{eq:J(t)} and $|\mathcal J_{0,i}| \leq C$ to deduce
\begin{align}
	|\mathcal J'| \leq C |Q'| \leq C | \omega| \leq C	\left| \textrm{Y}  \right| .
\end{align}
Combining this with the estimate for  $(\textrm{Y}^\flat)'$ from Lemma \ref{lemma-drift}  in the energy estimate \eqref{est:E} yields
\begin{align}
	E' \leq - \frac{c}{\eps^2 |\log \eps|} E + C \sqrt E (1 + \left| 
	\textrm{Y} 
	\right|)  + C E \left| \textrm{Y}  \right| .
\end{align}
Since 
$	\left| \textrm{Y}  \right| \leq  C (\sqrt E + 1)$, 
and using the uniform bound on $\textrm{Y}^\flat$ from \eqref{est:Y},
we arrive at 
\begin{align}
	E' \leq - \frac{c}{\eps^2 |\log \eps|} E + C \sqrt E (1 + E)
\end{align}
and for $Z = \sqrt E$
\begin{align}
	Z' \leq - \frac{c}{\eps^2 |\log \eps|} Z + C (1 + Z^2).
\end{align}
Recall that the constants depend on the minimal distance between the particles $\dmin(t)$ (see \eqref{dmin}).
More precisely, if $\dmin(t) \geq d$, then
\begin{align}
	Z' \leq - \frac{c_d}{\eps^2 |\log \eps|} Z + C_d (1 + Z^2),
\end{align}
for all $\eps \leq \eps_0(d)$.

By definition of $T_{\eps,d}$ in \eqref{def.Tepsd}, we find that on $(0,T_{\eps,d})$
\begin{align}
	Z' \leq - \frac{c_d}{2 \eps^2 |\log \eps|} Z + C_d.
\end{align}
By Gronwall's inequality, and recalling that the modulated energy $E$ is $\eps$-uniformly equivalent to $\vert \textrm{Y} - \textrm{Y}^\flat \vert^2$, 
we obtain that \eqref{est:Z} holds 
for all $t \in (0,T_{\eps,d})$, up to an adaptation of the constants $c_d$ and $ C_d$. 
\end{proof}

Below, in Section \ref{section-cv-filament}, we will prove the following result on the asymptotic behaviour of $ T_{\eps,d}$ as $\eps$ converges to $0$. 

To this end, let us recall
the definition  $\hat d_{\min} := \inf_{i \neq j } \dist( \hat {\mathcal C }_i,  \hat {\mathcal C }_j)$ from \eqref{def:d.min.hat}
and that $\hat T$ from \eqref{def:hat.T} is the maximal time for which $\hat d_{\min}$ stays positive.

\begin{Proposition}\label{prop-lifetime}
There is $\eps_0 >0 $ small enough which depends only on the reference filaments, $u^\flat$ and $\min_{t \leq T} \hat d_{\min}(t)$, such that {for all $T $ in $(0, \hat T)$,} for $d = \frac 1 4 \min_{t \leq T} \hat d_{\min}(t)$  and for all $\eps < \eps_0$
\begin{align}
    T_{\eps,d} > T.
\end{align}
\end{Proposition}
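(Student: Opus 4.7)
The plan is a continuity/bootstrap argument. Set $\tau_\eps := \min(T, T_{\eps,d})$ and suppose for contradiction that $\tau_\eps < T$. I will show that on $[0,\tau_\eps]$ the two conditions appearing in the definition \eqref{def.Tepsd} of $T_{\eps,d}$ hold with strict room to spare when $\eps$ is small enough, contradicting the minimality in the definition of $T_{\eps,d}$.

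On $[0,\tau_\eps]$ the hypotheses of Proposition \ref{p} are met, so
\begin{align*}
|\textrm{Y}(t) - \textrm{Y}^\flat(t)| \leq |\textrm{Y}(0) - \textrm{Y}^\flat(0)|\, e^{-c_d t/(\eps^2 |\log \eps|)} + C_d\, \eps^2 |\log \eps|.
\end{align*}
Since $|\textrm{Y}^\flat| \leq C$ by \eqref{est:Y}, we obtain $Z(t) = \sqrt{E(t)} \leq C$ uniformly in $\eps$, so the second condition $Z \leq c_d/(C_d \eps^2 |\log \eps|)$ in the definition of $T_{\eps,d}$ holds with enormous slack for $\eps$ small. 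The real task is therefore to show $\dmin(t) > 2d$ on $[0,\tau_\eps]$, which requires comparing the filament centerlines $\mathcal C_i(t)$ with their limits $\hat{\mathcal C}_i(t)$.

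To do so, I combine the slender-body approximations \eqref{eq:approx.K} and \eqref{ofint} with the coercivity \eqref{eq:K.coercive} to deduce that the Fax\'en velocity $\textrm{Y}^\flat = \mathcal K^{-1}\mathfrak f^\flat$ (at the $\eps$-filament configuration) satisfies
\begin{align*}
|\textrm{Y}^\flat(t) - \hat \textrm{Y}(h_\eps(t),Q_\eps(t),t)| \leq C |\log \eps|^{-1/2},
\end{align*}
where $\hat \textrm{Y}(h,Q,t) := \hat{\mathcal K}^{-1}(h,Q)\,\hat{\mathfrak f}^\flat(t,h,Q)$ is the right-hand side of the limit ODE \eqref{fav}. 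Thanks to \eqref{eq:Sylvester.K}, Lemma \ref{hat-inverse}, and the regularity of $u^\flat$, the map $(h,Q) \mapsto \hat \textrm{Y}(h,Q,t)$ is Lipschitz with a constant $L$ independent of $t \in [0,T]$ and of the configuration. Hence, writing $\delta(t) := \sup_{s\leq t}|(h_\eps,Q_\eps)(s) - (\hat h,\hat Q)(s)|$,
\begin{align*}
|\textrm{Y}(t) - \hat \textrm{Y}(\hat h(t),\hat Q(t),t)| \leq C e^{-c_d t/(\eps^2|\log\eps|)} + C_d \eps^2|\log\eps| + C|\log\eps|^{-1/2} + L\,\delta(t).
\end{align*}
Subtracting the kinematic equations \eqref{e.hi}--\eqref{e.Qi} from the limit equations \eqref{debaze}, integrating in time, and using the initial mismatch $|h_\eps(0)-\hat h(0)|\leq C\eps$ coming from \eqref{eq:initial.positions}--\eqref{eq:center.mass.small} (the exponential boundary-layer term integrates to $O(\eps^2|\log\eps|)$), Gronwall's inequality yields
\begin{align*}
\delta(t) \leq C(\eps + \eps^2|\log\eps| + |\log\eps|^{-1/2} T) e^{L T},
\end{align*}
which tends to zero uniformly on $[0,T]$. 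Consequently, for $\eps$ small enough,
\begin{align*}
\dmin(t) \geq \hat d_{\min}(t) - 2\delta(t) - C\eps \geq 4d - d/2 > 2d \quad \text{on } [0,\tau_\eps],
\end{align*}
contradicting $\tau_\eps = T_{\eps,d}$.

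The main obstacle is the apparent circularity: Proposition \ref{p} only delivers the relaxation estimate while $\dmin \geq d$ is assumed, yet establishing that separation uses precisely the convergence rate delivered by that proposition. The bootstrap structure above resolves the circularity by exhibiting strict improvement of both conditions on the interval where they are simultaneously in force, so that no first exit time can occur before $T$.
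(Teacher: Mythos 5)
Your proposal is correct and follows essentially the same route as the paper: the paper first proves the position estimate \eqref{est:main.positions} on $[0,T_{\eps,d}]$ (Proposition \ref{pbis}, via Proposition \ref{p}, Lemma \ref{itscor}, the Lipschitz property of the limit coefficients and Gronwall — exactly the steps you redo inside your bootstrap), and then concludes by the same continuity argument, noting that \eqref{est:Z} forces $T_{\eps,d}=\infty$ or $\dmin(T_{\eps,d})=d$ while $\dmin\geq 2d$ on $[0,\min\{T_{\eps,d},T\}]$ for $\eps$ small. The only cosmetic difference is that you fold the proof of Proposition \ref{pbis} into the contradiction argument rather than invoking it as a previously established estimate.
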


Let us already observe that combining 
 Proposition \ref{prop-lifetime}
and Proposition \ref{p} we obtain the following result. 

\begin{thm}\label{refinedqs}
Under the same assumptions as in Theorem 
\ref{main}
we have on the one hand the estimate 
\eqref{temps} on the lifetime 
and on the other hand, for all $T < \hat T$ there exists $C$  depending only on $u^\flat$, the reference filaments $\bar {\mathcal S}_i$,  $\inf_{t \in [0,T]} \hat d_{\min}(t)$ and the initial velocities, and there exists $\eps_0 > 0$ depending in addition on $T$ 
 such that for all $\eps $ in $(0, \eps_0)$ and all $t$ in $[0,T]$
	the difference between the solution 
	$(h' , \omega)$ to \eqref{N-S} and the  ``Fax\'en's'' velocities $(v^\flat,\omega^\flat)$ defined in \eqref{beth} satisfies 
\begin{align} \label{eq:asymptotic.velocities}
 \left| (h', \omega)(t)  - (v^\flat , \omega^\flat)(t) \right| 	 \leq   \left| (h'  , \omega) (0) - ( v^\flat ,\omega^\flat) (0) \right| \, e^{-  \frac{C t}{\eps^2 |\log \eps|}} + C \eps^2 |\log \eps|.
\end{align}
	\end{thm}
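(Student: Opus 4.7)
The plan is to obtain Theorem \ref{refinedqs} as an immediate consequence of Proposition \ref{prop-lifetime} and Proposition \ref{p}, which together have already absorbed the entire technical burden. No additional analysis of the Stokes system is needed at this stage; only bookkeeping and a notational translation.

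For the lifetime bound \eqref{temps}, given $T < \hat T$ I would set $d := \tfrac{1}{4}\min_{t \in [0,T]} \hat d_{\min}(t)$, which is strictly positive by the definition of $\hat T$ and the continuity of $\hat d_{\min}$. Applying Proposition \ref{prop-lifetime} to this $d$ yields $\eps_0 > 0$ such that $T_{\eps,d} > T$ for all $\eps < \eps_0$. As noted just after \eqref{def.Tepsd}, the inclusion $T_{\eps,d} < T^{\max}_\eps$ follows from Proposition \ref{CL} together with the elementary estimate $\min_{i\neq j} \dist(\mathcal S_i,\mathcal S_j) \geq \dmin - C\eps$. Hence $T^{\max}_\eps > T$, and letting $T \nearrow \hat T$ gives \eqref{temps}.

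For the quantitative velocity estimate \eqref{eq:asymptotic.velocities}, I would apply Proposition \ref{p} on $(0,T_{\eps,d})$, which by the previous paragraph contains $[0,T]$. The estimate \eqref{est:Z} reads exactly as \eqref{eq:asymptotic.velocities} under the notational identifications: by \eqref{rec2} and \eqref{det-v-rm} one has $\textrm{Y} = (\textrm{v},\omega) = (h',\omega)$, while by \eqref{beth}--\eqref{fax-defv} one has $\textrm{Y}^\flat = (\textrm{v}^\flat,\omega^\flat)$. The constant $C$ inherits its dependence from $c_d, C_d$ in Proposition \ref{p}, and $d$ depends solely on $\inf_{t\in[0,T]} \hat d_{\min}(t)$, matching the dependencies stated in the theorem.

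In this formulation the main obstacle lies entirely upstream, in proving Proposition \ref{prop-lifetime} itself. That step requires a bootstrap argument on the centerline trajectories: one must close a loop in which the $O(\eps^2 |\log \eps|)$ velocity bound from Proposition \ref{p} (valid as long as $\dmin(t) > d$ and $Z$ stays below the threshold) is integrated in time to control $|h_{i,\eps}(t) - \hat h_i(t)|$ and $|Q_{i,\eps}(t) - \hat Q_i(t)|$, which in turn prevents $\dmin(t)$ from dropping below $d$ on $[0,T]$ and keeps the threshold condition on $Z$ intact. The initial exponential relaxation layer of width $O(\eps^2 |\log \eps|)$ must also be handled separately, since during that layer $|\textrm{Y} - \textrm{Y}^\flat|$ can be of order one; however, its contribution to the position drift is only $O(\eps^2 |\log \eps|)$, which is negligible for the bootstrap. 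Once Proposition \ref{prop-lifetime} is in place, Theorem \ref{refinedqs} requires no further work.
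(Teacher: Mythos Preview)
Your proposal is correct and matches the paper's approach exactly: the paper states Theorem \ref{refinedqs} right after Proposition \ref{prop-lifetime} with the single sentence ``combining Proposition \ref{prop-lifetime} and Proposition \ref{p} we obtain the following result,'' and your write-up simply spells out this combination with the appropriate notational identifications and the choice $d = \tfrac{1}{4}\min_{t\in[0,T]}\hat d_{\min}(t)$. Your final paragraph on the bootstrap behind Proposition \ref{prop-lifetime} is accurate commentary but, as you note, lies upstream of the present statement.
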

	Indeed, as we emphasized in Section \ref{sec-cv}, this theorem provides a more precise approximation than Theorem \ref{main}, at the expense of $\eps$-dependent positions and implicit forces.

%%%%%%%%
\subsection{Proof of the part of Theorem \ref{main} which concerns the filaments}
\label{section-cv-filament}

We now turn to the proofs of \eqref{est:main.positions} and of 
\eqref{allezesti}. 
   In particular we are going to prove the convergence of the filament positions given by the time dependent vector 
  \textrm{Y}  defined in 
  \eqref{rec2} to  the limit dynamics for which we use the notation 
\begin{align}\label{def.hatY}
    \hat {\textrm Y}(t) := \hat {\mathcal K}^{-1}(\hat h(t) , \hat Q(t))  \hat {\mathfrak{f}}^\flat(t, \hat h(t) , \hat Q(t)) ,
\end{align}
where   
$\hat {\mathcal K}$ is 
  the $6N \times 6N$ matrix whose $6 \times 6$  diagonal blocks are the $\hat{\mathcal K}_{i,i}$, for $1 \leq i \leq N$, are defined in 
  \eqref{hatKblock}, and proved to be invertible in Lemma \ref{hat-inverse}, and $\hat {\mathfrak{f}}^\flat$ is the  vector  in $\R^{6N}$ which gathers the vectors $\hat{\mathfrak{f}}_i^\flat$, 
   for $1 \leq i \leq N$, defined in 
  \eqref{def:f.hat.flat.frac}.
  It follows from  \eqref{debaze} that 

  \begin{gather} \label{debaze-hat}
\hat {\textrm Y}(t) :=   (\hat{\textrm{v}}_i(t) ,{\hat \omega}_i (t))_{1 \leq i \leq N}  . 
      \end{gather}

From the estimates in the previous subsection
 we already know that the velocities $\textrm Y$ and $\textrm Y^\flat$ are close as long as the filaments are well separated. 
 We now introduce 
\begin{align}\label{def.tildeY}
    \tilde { \textrm Y}(t) := \hat {\mathcal K}^{-1}( h_\eps(t) ,Q_\eps(t)) \hat {\mathfrak{f}}^\flat(t, h_\eps(t) ,Q_\eps(t) )  .
\end{align}
The velocities $\tilde {\textrm Y}$ correspond to the limit dynamics but with the positions of the filaments given by the $\eps$-dynamics rather than the limit dynamics. In this sense, $\tilde {\textrm Y}$ can be seen as intermediate between $\textrm Y^\flat$ and $\hat {\textrm Y}$.
Next lemma takes benefit from the previous estimates of $\textrm Y - \textrm Y^\flat$ to establish 
 some estimates of  $\tilde {\textrm Y} - {\textrm Y^\flat}$  as long as the filaments are well separated. 
\begin{Lemma}
\label{itscor}
	For all $d > 0$ there exists a constant $C(d) > 0$ and $\eps_0(d) > 0$ such that for all $\eps \in(0,\eps_0)$ and $\dmin \geq d$,
	\begin{align}
		  |\tilde {\Y} - {\Y^\flat}| &\leq C |\log \eps|^{-1/2} \|u^\flat\|_{W^{1,\infty}(\R^3)}. \label{eq:Faxen.velocites}
	\end{align}
\end{Lemma}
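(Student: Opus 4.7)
The plan is to compare the two expressions $\tilde{\Y} = \hat{\mathcal{K}}^{-1}(h_\eps,Q_\eps)\hat{\mathfrak{f}}^\flat(t,h_\eps,Q_\eps)$ and $\Y^\flat = \mathcal{K}^{-1}\mathfrak{f}^\flat$ by exploiting the asymptotic identification between the two resistance matrices and the two Fax\'en force/torque vectors already established in Section \ref{sec:singularity.method}. Concretely, I will use \eqref{eq:approx.K} and \eqref{ofint} to write, at the filament configuration $(h_\eps,Q_\eps)$,
\[ \mathcal{K} = |\log\eps|^{-1}\hat{\mathcal{K}} + R_{\mathcal{K}}, \qquad \mathfrak{f}^\flat = |\log\eps|^{-1}\hat{\mathfrak{f}}^\flat + R_{\mathfrak{f}}, \]
with $|R_{\mathcal{K}}| \leq C|\log\eps|^{-3/2}$ and $|R_{\mathfrak{f}}| \leq C|\log\eps|^{-3/2}\|u^\flat\|_{W^{1,\infty}}$. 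Together with the coercivity bounds $|\mathcal{K}^{-1}| \leq C|\log\eps|$ from \eqref{eq:K.coercive} and $|\hat{\mathcal{K}}^{-1}| \leq C$ from Lemma \ref{hat-inverse}, this setup is purpose-built for a triangle-inequality estimate.

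The main step is the algebraic decomposition
\[ \tilde{\Y} - \Y^\flat = \hat{\mathcal{K}}^{-1}\hat{\mathfrak{f}}^\flat - \mathcal{K}^{-1}\mathfrak{f}^\flat = B^{-1}(g-f) + B^{-1}(A-B)A^{-1}f, \]
where I set $A := \mathcal{K}$, $B := |\log\eps|^{-1}\hat{\mathcal{K}}$, $f := \mathfrak{f}^\flat$ and $g := |\log\eps|^{-1}\hat{\mathfrak{f}}^\flat$, so that $B^{-1}g = \hat{\mathcal{K}}^{-1}\hat{\mathfrak{f}}^\flat$ and $A^{-1}f = \mathcal{K}^{-1}\mathfrak{f}^\flat$. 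Using $|B^{-1}| \leq C|\log\eps|$, the bound $|f| \leq C|\log\eps|^{-1}\|u^\flat\|_{W^{1,\infty}}$ (which follows from $|g| \leq C|\log\eps|^{-1}\|u^\flat\|_{W^{1,\infty}}$ and the $R_{\mathfrak{f}}$ estimate), together with $|g-f| = |R_{\mathfrak{f}}| \leq C|\log\eps|^{-3/2}\|u^\flat\|_{W^{1,\infty}}$ and $|A-B| = |R_{\mathcal{K}}| \leq C|\log\eps|^{-3/2}$, both terms are estimated by $C|\log\eps|^{-1/2}\|u^\flat\|_{W^{1,\infty}}$, which yields \eqref{eq:Faxen.velocites}.

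The one subtlety to check is that the $\hat{\mathcal{K}}$ and $\hat{\mathfrak{f}}^\flat$ appearing in \eqref{eq:approx.K}–\eqref{ofint} are attached to the centerlines $\mathcal{C}_i(h_\eps,Q_\eps) = h_{i,\eps} + Q_{i,\eps}(\bar{\mathcal{C}}_i - \bar{h}_{i,\eps})$ of the $\eps$-filaments, whereas $\tilde{\Y}$ is defined using $\hat{\mathcal{K}}(h_\eps,Q_\eps)$ and $\hat{\mathfrak{f}}^\flat(t,h_\eps,Q_\eps)$, which involve the curves $h_{i,\eps} + Q_{i,\eps}\bar{\mathcal{C}}_i$. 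These two families of curves differ by a translation of size $|\bar{h}_{i,\eps}| \leq C\eps$ (see \eqref{eq:center.mass.small}). Since $(h,Q) \mapsto \hat{\mathcal{K}}(h,Q)$ and $(h,Q) \mapsto \hat{\mathfrak{f}}^\flat(t,h,Q)$ are smooth (and in particular Lipschitz, uniformly in $\dmin \geq d$) by the same smoothness and explicit expressions \eqref{hatK}, \eqref{hatfab}, \eqref{eq:Sylvester.K} used in the proof of Proposition \ref{CL-limit}, this discrepancy contributes only an additional $O(\eps)\|u^\flat\|_{W^{1,\infty}}$ term, which is absorbed into $C|\log\eps|^{-1/2}\|u^\flat\|_{W^{1,\infty}}$ provided $\eps < \eps_0(d)$. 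I do not expect any further obstacle; the result is really a quantitative corollary of Corollary \ref{prop:asymptotic.resistance}.
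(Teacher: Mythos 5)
Your proof is correct and follows essentially the same route as the paper: a resolvent-type decomposition of $\tilde{\Y}-\Y^\flat$ (the paper writes it with $\mathcal K^{-1}$ as the leading factor, you with $(|\log\eps|^{-1}\hat{\mathcal K})^{-1}$, which is equivalent), combined with \eqref{eq:approx.K}, \eqref{ofint}, \eqref{eq:K.coercive} and the uniform bounds on $\hat{\mathcal K}^{-1}$ and $\hat{\mathfrak{f}}^\flat$. Your extra remark about the $O(\eps)$ discrepancy between the $\eps$-filament centerlines and the curves $h_{i,\eps}+Q_{i,\eps}\bar{\mathcal C}_i$ coming from the barycenter shift \eqref{eq:center.mass.small} is a point the paper's proof leaves implicit, and your Lipschitz argument disposes of it correctly.
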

\begin{proof}
Recalling the definition of ${\textrm Y^\flat}$ in 
\eqref{beth} and the one of $\tilde {\textrm Y}$ above, we observe that 
		\begin{align*}
	      {\textrm Y^\flat} - \tilde {\textrm Y} &=
	   \mathcal K^{-1} (\mathfrak{f}^\flat -| \log \eps |^{-1} \hat {\mathfrak{f}}) -
	   \mathcal K^{-1} (\mathcal K - | \log \eps |^{-1}  \mathcal {\hat K})  \mathcal {\hat K}^{-1} \hat {\mathfrak{f}}	   ,
	   	   	\end{align*}
	 where $\hat K$ and $\hat f$ should be understood as being  evaluated at $(h_{\eps},Q_\eps)$. 
		Combining \eqref{eq:approx.K}, \eqref{eq:K.coercive}, \eqref{ofint} and observing that 
		$
	   | \mathcal {\hat K}^{-1} | + |\hat {\mathfrak{f}} | 
	    \leq C $, 
		we conclude the   proof of Lemma \ref{itscor}. 
	\end{proof}
We now turn to the proof that \eqref{est:main.positions} and 
 \eqref{allezesti} holds on $[0,T_{\eps,d}]$, where $T_{\eps,d}$ is defined by \eqref{def.Tepsd},
 that is in particular to to the 
estimate of $ \textrm Y - \hat {\textrm Y}$. 
 \begin{Proposition}\label{pbis}
For all $d>0$, there exists $C(d)>0$ and $\eps_0(d)>0$ independent of $\eps$ such that 
for all $\eps\in(0,\eps_0)$, the estimates 
\eqref{est:main.positions} and 
 \eqref{allezesti} hold on $[0,T_{\eps,d}]$. 
\end{Proposition}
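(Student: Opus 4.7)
The plan is to estimate $\textrm{Y} - \hat{\textrm{Y}}$ via the triangle decomposition
\[
\textrm{Y} - \hat{\textrm{Y}} = (\textrm{Y} - \textrm{Y}^\flat) + (\textrm{Y}^\flat - \tilde{\textrm{Y}}) + (\tilde{\textrm{Y}} - \hat{\textrm{Y}}),
\]
where the intermediate object $\tilde{\textrm{Y}}$ from \eqref{def.tildeY} plays the role of the limit velocity evaluated at the \emph{exact} $\eps$-positions. Each term in this decomposition is controlled by a result already established in the excerpt: the first by the modulated-energy bound \eqref{est:Z} of Proposition \ref{p}, the second by \eqref{eq:Faxen.velocites} of Lemma \ref{itscor}, and the third by the Lipschitz dependence of the map $(h,Q) \mapsto \hat{\mathcal K}^{-1}(h,Q)\,\hat{\mathfrak{f}}^\flat(t,h,Q)$ on the centerline positions, which is uniform in $t$ thanks to the smoothness assumption \eqref{regover} on $u^\flat$ and the fact that $\hat{\mathcal K}$ is smooth and uniformly invertible on configurations with $\dmin \geq d$ (by \eqref{eq:Sylvester.K} and Lemma \ref{hat-inverse}).

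More precisely, on $[0,T_{\eps,d}]$ I would obtain
\[
|\textrm{Y} - \textrm{Y}^\flat|(t) \leq |\textrm{Y}(0) - \textrm{Y}^\flat(0)|\, e^{-c_d t/(\eps^2|\log\eps|)} + C_d \eps^2 |\log\eps|,
\qquad
|\textrm{Y}^\flat - \tilde{\textrm{Y}}|(t) \leq C_d |\log\eps|^{-1/2},
\]
\[
|\tilde{\textrm{Y}} - \hat{\textrm{Y}}|(t) \leq C_d\,|(h_\eps,Q_\eps)(t) - (\hat h,\hat Q)(t)|.
\]
Combining these three bounds yields
\[
|\textrm{Y} - \hat{\textrm{Y}}|(t) \leq |\textrm{Y}(0) - \textrm{Y}^\flat(0)|\, e^{-c_d t/(\eps^2|\log\eps|)} + C_d |\log\eps|^{-1/2} + C_d\,|(h_\eps,Q_\eps)(t) - (\hat h,\hat Q)(t)|,
\]
having absorbed the term $\eps^2|\log\eps|$ into the larger $|\log\eps|^{-1/2}$.

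For the position estimate \eqref{est:main.positions}, I would integrate $h_{i,\eps}'(t) - \hat h_i'(t) = \textrm{v}_i(t) - \hat{\textrm v}_i(t)$ and the analogous identity for the rotations (using \eqref{e.hi}, \eqref{e.Qi} and \eqref{debaze}, together with the fact that $Q \mapsto (\omega\wedge\cdot)Q$ is Lipschitz in $(\omega,Q)$ on bounded sets). This gives
\[
|(h_\eps,Q_\eps)(t) - (\hat h,\hat Q)(t)| \leq |(h_\eps,Q_\eps)(0) - (\hat h,\hat Q)(0)| + \int_0^t |\textrm{Y}-\hat{\textrm{Y}}|(s)\,ds.
\]
The initial data differ only by $\hat Q(0) \bar h_{i,\eps}$, which is $O(\eps)$ by \eqref{eq:center.mass.small}. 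The integral of the exponential term contributes $O(\eps^2|\log\eps|) = O(\eps)$, the middle term integrates to $C_d |\log\eps|^{-1/2} t$, and the last term feeds Gronwall's lemma, producing the factor $e^{CT}$. This gives \eqref{est:main.positions}.

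For the velocity estimate \eqref{allezesti}, I would re-insert the position bound just obtained into the pointwise decomposition of $|\textrm{Y} - \hat{\textrm{Y}}|$ above. The three contributions become the three terms in \eqref{allezesti}: the exponential relaxation from $\textrm{Y} - \textrm{Y}^\flat$ carries the initial layer constant $|(\textrm{v}_i,\omega_i)(0) - (\hat{\textrm v}_i,\hat\omega_i)(0)|$ (after observing that $|\textrm{Y}(0) - \textrm{Y}^\flat(0)| \leq |\textrm{Y}(0) - \hat{\textrm Y}(0)| + C|\log\eps|^{-1/2}$, the discrepancy between $\textrm{Y}^\flat(0)$ and $\hat{\textrm Y}(0)$ being absorbed into the second term); the $|\log\eps|^{-1/2}$-term comes directly from Lemma \ref{itscor}; and the $e^{Ct}$ growth comes from the position error via the Lipschitz estimate on $\tilde{\textrm Y} - \hat{\textrm Y}$. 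The main technical point is to ensure that all constants remain uniform throughout, which on $[0,T_{\eps,d}]$ follows from the definition \eqref{def.Tepsd} guaranteeing $\dmin \geq d$, so that every estimate invoked above (Proposition \ref{p}, Lemma \ref{itscor}, and Lipschitz bounds from Proposition \ref{lem:shapeDerivatives}) applies with the same constant $C_d$.
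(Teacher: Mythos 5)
Your proposal is correct and follows essentially the same route as the paper: the same three-term decomposition $\textrm{Y}-\hat{\textrm{Y}} = (\textrm{Y}-\textrm{Y}^\flat)+(\textrm{Y}^\flat-\tilde{\textrm{Y}})+(\tilde{\textrm{Y}}-\hat{\textrm{Y}})$ controlled by Proposition \ref{p}, Lemma \ref{itscor} and the Lipschitz bound \eqref{est:Y.hat.Y.tilde}, integrated and closed by Gronwall for \eqref{est:main.positions}, then re-inserted pointwise (with the same splitting of $|\textrm{Y}(0)-\textrm{Y}^\flat(0)|$ at the initial time) to obtain \eqref{allezesti}.
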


	\begin{proof}
First we recall that the coefficients of \eqref{fav} are smooth and globally Lipschitz, so that recalling \eqref{def.hatY} and \eqref{debaze-hat}, we infer that 
\begin{align} \label{est:Y.hat.Y.tilde}
    |\tilde {\textrm Y} - \hat {\textrm Y}| \leq C |(h_\eps,Q_\eps) -  (\hat h, \hat Q)|.
\end{align}
Recalling  \eqref{eq:initial.positions} and \eqref{eq:center.mass.small}, we also have that 
 the initial data $(h(0),Q(0))$ and $(\hat h(0),\hat Q(0))$ satisfy
\begin{align}
    |(h(0),Q(0)) - (\hat h(0),\hat Q(0))| \leq C \eps ,
\end{align}
Thus, using \eqref{e.hi}, \eqref{e.Qi} and 
\eqref{debaze}, by a combination of \eqref{est:Z}, \eqref{est:Y.hat.Y.tilde} and Lemma \ref{itscor}
 we obtain that  for all $t \leq T_{\eps,d}$, 
\begin{align*}
    &| (h_\eps(t),Q_\eps(t)) - (\hat h(t),\hat Q(t)) |\\
    &\leq C \eps +  \int_0^t |{\textrm Y} - {\textrm Y^\flat}| + |{\textrm Y^\flat} - \tilde {\textrm Y}| + |\tilde {\textrm Y} - \hat {\textrm Y}| \dd s\\
    & \leq C \eps + \int_0^t \big( |{\textrm Y}(0) - {\textrm Y^\flat}(0)| e^{-  \frac{c_d s}{\eps^2 |\log \eps|}} + C_d \eps^2 |\log \eps| + C_d |\log \eps|^{-1/2} \big) \dd s\\
    &  + C \int_0^t  |(h_\eps(s),Q_\eps(s)) -  (\hat h(s), \hat Q(s))| \dd s  \\
    & \leq  C \eps 
    +   C_d |\log \eps|^{-1/2} t  + C \int_0^t  |(h_\eps(s),Q_\eps(s)) -  (\hat h(s), \hat Q(s))| \dd s.
\end{align*}
Here,  we used the bound  \eqref{est:Y}  on $ {\textrm Y^\flat}$ and   the constant in the last line depends on the initial velocities $\textrm Y(0)$.
By Gronwall's estimate, we deduce that  \eqref{est:main.positions}  holds for all $t \leq T_{\eps,d}$.
 We also note that, bookkeeping the computations above, this allows to prove the following bound on the velocities: 
\begin{align} \label{est:main.velocities}
|{\textrm Y} - \hat {\textrm Y}| \leq C_d e^{- \frac{c_dt}{\eps^2 |\log\eps|}}{|{\textrm Y}(0) - {\textrm Y^\flat}(0)|} + C_d |\log \eps|^{-1/2} + C_d |\log \eps|^{-1/2} t e^{Ct}. 
\end{align}
{Thus, estimating 
\begin{align}
|{\textrm Y}(0) - {\textrm Y^\flat}(0)| \leq |{\textrm Y}(0) - \hat {\textrm Y}(0)| + |\hat {\textrm Y}(0) - \tilde {\textrm Y}(0)| + |\tilde {\textrm Y}(0) - {\textrm Y^\flat}(0)|, 
\end{align}
and applying again  \eqref{est:Y.hat.Y.tilde} and Lemma \ref{itscor} yields  \eqref{allezesti} on $[0,T_{\eps,d}]$.}
	\end{proof}

We now turn to the proof of Proposition \ref{prop-lifetime}. 

\begin{proof}[Proof of Proposition \ref{prop-lifetime}] 
Let  $T < \hat T$ 
and $d = \frac 1 4 \min_{t \leq T} \hat d_{\min}(t)$.
We first observe that \eqref{est:Z} implies $T_{\eps,d} = \infty$ or $\dmin(T_{\eps,d} ) = d$.
Moreover, by \eqref{est:main.positions},
 we have  on $(0,T_{\eps,d})$, 
\begin{align} \label{est:d_min}
\dmin(t) \geq  \hat d_{\min}(t) - C \eps -  C_d(\eps +|\log \eps|^{- \frac 1 2} t) e^{C t},
\end{align} 
where the term $C \eps$ accounts for the filaments' thickness.
 Thus, the choice $ d = \frac 1 4 \min_{t \leq T} \hat d_{\min}(t)$ implies for $\eps$ sufficiently small (depending on $d$, $T$, $u^\flat$ and the reference filaments), $\dmin(t) \geq 2 d$ for all
$t \leq \min\{T_{\eps,d}, T\}$. Since $T_{\eps,d} = \infty$ or $\dmin(T_{\eps,d} ) = d$ this implies $T_{\eps,d} > T$. This concludes the proof of \eqref{est:main.positions} and \eqref{allezesti} on $[0,T]$. 
This completes the proof of 
Proposition \ref{prop-lifetime}. 
\end{proof}

\section{Asymptotic behaviour of the fluid}
\label{sectiondassaut}
This section is devoted to the proof of the part of Theorem \ref{main}  devoted to the asymptotic behaviour of the fluid, that is to the proof of \eqref{conv-flui-precised}, together with the proof of \eqref{conv-flui-precised.comment}. 
To this aim we first decompose   
$u^\mathfrak{p} $ 
into 
\begin{equation}
\label{trf}
u^\mathfrak{p} =  \sum_{1 \leq i \leq N} u^\mathfrak{p} _i
 \quad 
 \text{ and }   \quad
 p^\mathfrak{p} =  \sum_{1 \leq i \leq N} p^\mathfrak{p} _i,
 \end{equation}
 where   
\begin{subequations}
\label{fintrau-tot}
\begin{gather}
\label{fintrau1}
\displaystyle  - \Delta u^\mathfrak{p}_i  + \nabla p^\mathfrak{p}_i =0 \quad  \text{ and } \quad \operatorname{div} u^\mathfrak{p}_i   = 0  \quad \text{in }\mathcal{F}(t), \\
\label{fintrau1bis}
u^\mathfrak{p}_i = v^{{\mathcal S}_i}  - u^\flat \quad \text{for}\ \  x\in  \mathcal{S}_i  (t) , \\
\label{fintrau1bisbis}
u^\mathfrak{p}_i = 0 \quad \text{for}\ \  x\in  \mathcal{S}_j (t) , \quad \text{ for} ~ j \neq i .
\end{gather}
\end{subequations}
Then, we apply Theorem \ref{lem:slender.several} to $ u^\mathfrak{p}_i $, for each $i$,  recalling that in $ \mathcal{S}_i$, 
\begin{equation*} 
 v^{{\mathcal S}_i} =  \sum_{1\leq \beta \leq 6} \textrm{Y}_{i,\beta }\,  v_{i,\beta}   , 
\end{equation*}
to  obtain the following proposition.

\begin{Proposition} \label{pro:u^p.U}
Let $d>0$. Then there exists $\eps_0(d) > 0$ and $C(d) < \infty$ such that for all $\eps \in (0,\eps_0)$ and all $t \in [0,T_\eps^{\max}]$ 
with $d_{\min}(t) \geq d$

\begin{align} \label{est:fluid.velocities.-1} 
\|u^\mathfrak{p}(t,\cdot) - |\log \eps|^{-1} \sum_{1 \leq i \leq N} U_{\mathcal C_i(t)}[ v^{{\mathcal S}_i(t)}  - u^\flat(t,\cdot)  ]\|_{\dot H^{1} (\R^3 \setminus \cup_i {\mathcal S}_i(t)) }
\\ \nonumber \quad \leq C | \log \eps |^{-1} \big( |\Y(t)|_{} + \|u^\flat(t,\cdot)\|_{W^{1,\infty}} \big).
\end{align}
Moreover, for  $1 \leq q < 3/2$,
\begin{align} \label{est:fluid.velocities.0} 
\|u^\mathfrak{p}(t,\cdot) - |\log \eps|^{-1} \sum_{1 \leq i \leq N} U_{\mathcal C_i(t)}[ v^{{\mathcal S}_i(t)}  - u^\flat(t,\cdot)  ]\|_{W^{1,q}_{\loc}} \leq C | \log \eps |^{-3/2} \big( |\Y(t)|_{} + \|u^\flat(t,\cdot)\|_{W^{1,\infty}} \big),
\end{align}
Furthermore, for $1 \leq p< 3$, 
\begin{align} \label{est:fluid.velocities.1} 
\|u^\mathfrak{p} - |\log \eps|^{-1} \sum_{1 \leq i \leq N} U_{\mathcal C_i}[ v^{{\mathcal S}_i}  - u^\flat  ]\|_{L^p_{\loc}} \leq C | \log \eps |^{-3/2} \big( |\Y|_{} + \|u^\flat\|_{W^{1,\infty}} \big),
\end{align}
and, for $3 \leq p < 6$
\begin{align} \label{est:fluid.velocities.comment} 
\|u^\mathfrak{p} - |\log \eps|^{-1} \sum_{1 \leq i \leq N} U_{\mathcal C_i}[ v^{{\mathcal S}_i}  - u^\flat  ]\|_{L^p_{\loc}} 
\leq C | \log \eps |^{-1 + {\frac 3 p - \frac 1 2 - \delta}} \big( |\Y(t)|_{} + \|u^\flat\|_{W^{1,\infty}} \big).
\end{align}
\end{Proposition}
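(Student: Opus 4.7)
The plan is to reduce the four bounds to a direct application of Theorem \ref{lem:slender.several} to each summand in the linear decomposition $u^\mathfrak{p} = \sum_{i=1}^N u^\mathfrak{p}_i$ from \eqref{trf} and \eqref{fintrau-tot}. Each $u^\mathfrak{p}_i$ solves the Stokes problem \eqref{eq:Stokes.data.filaments} with boundary data $v = v^{\mathcal S_i} - u^\flat|_{\mathcal S_i}$ on $\mathcal S_i$ and zero on the other filaments. Since $v^{\mathcal S_i}$ is rigid and $u^\flat$ is smooth and divergence-free, the data satisfies $\|v\|_{W^{1,\infty}(\mathcal S_i)} \leq C(|\Y(t)| + \|u^\flat(t,\cdot)\|_{W^{1,\infty}(\R^3)})$ together with $\int_{\mathcal S_i} \dv v = 0$. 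Applying \eqref{est:u-U_i} to each $u^\mathfrak{p}_i$, and bounding the contributions of $U_{\mathcal C_j}$ on $\R^3 \setminus \mathcal S_i$ for $j \neq i$ via \eqref{eq:norm.U_i-bis}, then summing over $i$, yields \eqref{est:fluid.velocities.-1}. Summing \eqref{est:u-U_i.L^p} over $i$ yields \eqref{est:fluid.velocities.0}.

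For the $L^p_\loc$ estimate \eqref{est:fluid.velocities.1} with $1 \leq p < 3$, I would apply the Sobolev embedding $W^{1,q}(K) \hookrightarrow L^{q^*}(K)$ with $q < 3/2$ chosen so that $q^* = 3q/(3-q) \geq p$ (e.g.\ $q = 3p/(p+3)$), followed by the inclusion $L^{q^*}(K) \subset L^p(K)$ on the compact set $K$. Combined with \eqref{est:fluid.velocities.0}, this gives an $L^p_\loc$ bound with rate $|\log \eps|^{-3/2}$, at the price of a constant that may degenerate as $p \to 3^-$.

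For \eqref{est:fluid.velocities.comment} with $p \in [3, 6)$, the natural strategy is interpolation. On the one hand, applying the Sobolev embedding $\dot H^1 \hookrightarrow L^6$ on the exterior domain $\R^3 \setminus \cup_j \mathcal S_j$ to \eqref{est:fluid.velocities.-1} produces an $L^6_\loc$ bound on the difference at rate $|\log \eps|^{-1}$. On the other hand, for any fixed $q^* \in (3/2, 3)$, the previous paragraph supplies an $L^{q^*}_\loc$ bound at rate $|\log \eps|^{-3/2}$. H\"older interpolation
\begin{equation*}
\|\cdot\|_{L^p(K)} \leq \|\cdot\|_{L^{q^*}(K)}^\theta \|\cdot\|_{L^6(K)}^{1-\theta}, \qquad \frac 1 p = \frac{\theta}{q^*} + \frac{1-\theta}{6},
\end{equation*}
then yields a rate of the form $|\log \eps|^{-(1+\theta/2)}$, and calibrating $q^*$ as a function of $p$ and $\delta$ recovers the exponent stated in \eqref{est:fluid.velocities.comment}. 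The blow-up of the $L^{q^*}$-constant as $q^* \to 3^-$ is absorbed into the $\delta$-margin and enters the constant $C$.

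The main technical subtlety is the $L^6_\loc$ bound: the error is not in $\dot H^1(\R^3)$ globally, since $\nabla U_{\mathcal C_i}$ carries a non-square-integrable $1/\dist(\cdot, \mathcal C_i)$ singularity along the centerlines. This is handled by extending $u^\mathfrak{p}$ to $\R^3$ by the filament velocities and using the pointwise bound \eqref{eq:decay.U}, which places $U_{\mathcal C_i}$ in $L^p_\loc(\R^3)$ for every $p < \infty$; one then applies the Sobolev embedding on the exterior $\R^3 \setminus \cup_j \mathcal S_j$ (where $\dot H^1$-control does hold, also using the decay of Stokes velocities at infinity), and absorbs the contribution from the thin set $\cup_j \mathcal S_j$ (Lebesgue measure $O(\eps^2)$) by the uniform $L^\infty$-bound of the natural extensions on the filaments.
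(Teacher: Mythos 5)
Your treatment of \eqref{est:fluid.velocities.-1}, \eqref{est:fluid.velocities.0} and \eqref{est:fluid.velocities.1} is essentially the paper's own: decompose $u^\mathfrak{p}=\sum_i u^\mathfrak{p}_i$ as in \eqref{trf}--\eqref{fintrau-tot}, apply Theorem \ref{lem:slender.several} summand by summand with data $v^{\mathcal S_i}-u^\flat$ (whose $W^{1,\infty}(\mathcal S_i)$ norm is $\leq C(|\Y|+\|u^\flat\|_{W^{1,\infty}})$ and whose divergence has zero integral over $\mathcal S_i$), and use Sobolev embedding for $p<3$. The appeal to \eqref{eq:norm.U_i-bis} is superfluous here: the $i$-th error term is already controlled by \eqref{est:u-U_i} on $\R^3\setminus\mathcal S_i\supset\R^3\setminus\cup_j\mathcal S_j$, and no cross terms $U_{\mathcal C_j}$, $j\neq i$, appear in that summand.

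For $3\leq p<6$ your route differs from the paper's: you interpolate in Lebesgue norms between the $L^{q^*}_{\loc}$ bound, $q^*<3$, at rate $|\log\eps|^{-3/2}$ and an $L^6_{\loc}$ bound at rate $|\log\eps|^{-1}$ deduced from \eqref{est:fluid.velocities.-1} by Sobolev embedding on the exterior domain, whereas the paper interpolates at the level of gradients (H\"older between the $L^{q}$, $q<3/2$, and $L^{2}$ bounds for the gradient of the error on $\R^3\setminus\cup_i\mathcal S_i$, see \eqref{est:fluid.velocities.comment.1}), supplements this inside the filaments by the pointwise bounds \eqref{eq:decay.U}--\eqref{eq:decay.nabla.U}, and only then applies Sobolev embedding on a fixed compact set. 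Your exponent arithmetic is correct; in fact it yields the rate $|\log\eps|^{-1-(\frac{3}{p}-\frac{1}{2}-\delta)}$ of the paper's intermediate estimate \eqref{est:fluid.velocities.comment.1}, which is at least as strong as the exponent displayed in \eqref{est:fluid.velocities.comment}. The one step you assert rather than justify is precisely the delicate one: the inequality $\|w\|_{L^6}\leq C\|\nabla w\|_{L^2}$ on $\R^3\setminus\cup_j\mathcal S_j$ with a constant independent of $\eps$, for a domain that degenerates (tubes of radius $\eps$ are removed) as $\eps\to0$. This is true but requires an argument, for instance a uniform-in-$\eps$ extension operator across each thin tube built by reflection in the cross-sections; alternatively it can be bypassed entirely by arguing as the paper does, where only H\"older's inequality (no domain-dependent constant) is used on the exterior and the filament region is handled by the explicit pointwise bounds before embedding on a fixed compact. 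A minor slip: on $\mathcal S_i$ the approximation $|\log\eps|^{-1}U_{\mathcal C_i}$ is not uniformly bounded (logarithmic singularity on the centerline), so the contribution of the thin set should be absorbed via \eqref{eq:decay.U} and $|\cup_i\mathcal S_i|\leq C\eps^2$, as you indeed indicate, rather than via an $L^\infty$ bound.
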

\begin{proof}
Estimates \eqref{est:fluid.velocities.-1} and \eqref{est:fluid.velocities.0} follow immediately from Theorem \ref{lem:slender.several}. Moreover, \eqref{est:fluid.velocities.1} follows from \eqref{est:fluid.velocities.0} and Sobolev embedding.

Concerning \eqref{est:fluid.velocities.comment} for $3 \leq p < 6$,
we  combine \eqref{est:u-U_i} and \eqref{est:u-U_i.L^p} instead of just relying on \eqref{est:u-U_i.L^p}.
More precisely, combining \eqref{est:u-U_i} and \eqref{est:u-U_i.L^p} with Hölder' inequality yields  for all $3/2 \leq q < 2$ and all $\delta > 0$
\begin{align} \label{est:fluid.velocities.comment.1} 
&\|u^\mathfrak{p} - |\log \eps|^{-1} \sum_{1 \leq i \leq N} U_{\mathcal C_i}[ v^{{\mathcal S}_i}  - u^\flat  ]\|_{W^{1,q}_{\loc}(\R^3 \setminus \cup_i \mathcal S_i)}\\
&\leq C | \log \eps |^{-1 - \frac 3 p + \frac 1 2 + \delta} \big( |\Y(t)|_{} + \|u^\flat\|_{W^{1,\infty}} \big).
\end{align}

Since the $H^1$-estimate in \eqref{est:u-U_i} excludes the sets occupied by the filaments, we had to exclude it in the above estimate. However, the pointwise estimates \eqref{eq:decay.nabla.U}
imply that for all   $q < 2$
\begin{align} \label{est:fluid.velocities.comment.2}
    &\|u^\mathfrak{p} - |\log \eps|^{-1} \sum_{1 \leq i \leq N} U_{\mathcal C_i}[ v^{{\mathcal S}_i}  - u^\flat  ]\|_{L^p(\cup_i \mathcal S_i)}\\
    &\leq  |\log \eps|^{-1} \eps^{2-q} \big( |\Y(t)|_{} + \|u^\flat\|_{W^{1,\infty}} \big).
\end{align}
Combining \eqref{est:fluid.velocities.comment.1} and \eqref{est:fluid.velocities.comment.2} yields \eqref{est:fluid.velocities.comment}.
\end{proof}
In order to conclude that  \eqref{conv-flui-precised} and \eqref{conv-flui-precised.comment} hold true,  we show the following result.
\begin{Lemma} \label{lemma-doubleu}
Let $1 \leq i \leq N$ and denote 
\begin{align}
w := U_{\mathcal C_i}[ v^{{\mathcal S}_i} - u^\flat] - U_{\hat{\mathcal C}_i}[   {\hat v}^{{\mathcal S}_i}(t,\cdot)  - u^\flat (t,\cdot)  ].
\end{align}
Then, there exists $\eps_0 > 0$  such that for all $\eps $ in $(0,\eps_0)$,  for all $t \in [0,T_\eps^{\max}]$,
\begin{itemize}
    \item for
all $1 \leq p < 2$, and all  compact subset $K$ of $\R^3$, there exists  $C $ in $(0,+\infty)$  such that
\begin{equation} \label{omeg-s}
\begin{aligned}
 \|w\|_{L^p (K)}  \leq C |(Q_{\eps,i},h_{i,\eps} + \bar h_{i,\eps}) - (\hat Q_i,\hat h_i)| \big( |\Y|_{} + \|u^\flat\|_{W^{1,\infty}} \big) + C |(v,\omega) - (\hat v,\hat \omega)|.
\end{aligned}
\end{equation}
  \item  for $2 \leq p < 6$, and all  compact subset $K$ of $\R^3$, there exists  $C $ in $(0,+\infty)$  such that
\begin{align}  \label{omeg-s.2}
    \|w\|_{L^p (K)} & \leq C |(Q_{\eps,i},h_{i,\eps} + \bar h_{i,\eps}) - (\hat Q_i,\hat h_i)|^{\frac 3 p - \frac 1 2 - \delta} \big( |\Y|_{} + |\hat \Y|_{} + \|u^\flat\|_{W^{1,\infty}} \big)
  \\ \nonumber  &\quad + C |(v,\omega) - (\hat v,\hat \omega)|
\end{align}
\end{itemize}
\end{Lemma}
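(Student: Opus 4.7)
Parametrize both centerlines through the reference arc-length parameter $s \in [0,L_i]$ by $\gamma(s) := h_{i,\eps} + Q_{i,\eps}(\gamma_i(s) - \bar h_{i,\eps})$ and $\hat\gamma(s) := \hat h_i + \hat Q_i \gamma_i(s)$. With $D := |(Q_{\eps,i}, h_{i,\eps} + \bar h_{i,\eps}) - (\hat Q_i, \hat h_i)|$ and $V := |(v,\omega) - (\hat v,\hat\omega)|$, a direct computation together with $|\bar h_{i,\eps}| \leq C\eps$ gives
$$|\gamma(s) - \hat\gamma(s)| + |\gamma'(s) - \hat\gamma'(s)| \leq C(D+\eps) \qquad \text{uniformly in } s \in [0,L_i].$$
I then decompose $w = A_1 + A_2$ with
\begin{align*}
 A_1(x) &:= \frac{1}{2}\int_0^{L_i} S(x-\gamma(s))\,k(\gamma'(s))\,\big[(v^{\mathcal S_i}-u^\flat)(\gamma(s)) - (\hat v^{\mathcal S_i}-u^\flat)(\hat\gamma(s))\big]\, ds, \\
 A_2(x) &:= \frac{1}{2}\int_0^{L_i} \big[S(x-\gamma(s))k(\gamma'(s)) - S(x-\hat\gamma(s))k(\hat\gamma'(s))\big]\,(\hat v^{\mathcal S_i}-u^\flat)(\hat\gamma(s))\, ds,
\end{align*}
so that $A_1$ isolates the change of the velocity data on the fixed curve $\mathcal C_i$ while $A_2$ captures the displacement of the curve at fixed data.

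For $A_1$, expanding $v^{\mathcal S_i}(\gamma(s)) = v + \omega \wedge (\gamma(s) - h_{i,\eps})$ and $\hat v^{\mathcal S_i}(\hat\gamma(s)) = \hat v + \hat\omega \wedge (\hat\gamma(s)-\hat h_i)$ and invoking the Lipschitz continuity of $u^\flat$ yields
$$\big|(v^{\mathcal S_i}-u^\flat)(\gamma(s)) - (\hat v^{\mathcal S_i}-u^\flat)(\hat\gamma(s))\big| \leq C V + C(D+\eps)\big(|\hat\Y| + \|u^\flat\|_{W^{1,\infty}}\big).$$
Hence $A_1 = U_{\mathcal C_i}[\phi]$ for a density $\phi$ whose $L^\infty$-norm is bounded by the right-hand side. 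By the pointwise estimate \eqref{eq:decay.U}, $U_{\mathcal C_i}[\phi]$ is at worst logarithmically singular near $\mathcal C_i$ and decays like $1/|x|$ at infinity, so $\|A_1\|_{L^p(K)} \leq C_p \|\phi\|_{L^\infty}$ for every $p < \infty$. Using $|\hat\Y| \leq |\Y| + V$ and absorbing the $\eps$-term into the implicit constant (for $\eps$ small relative to $D$, or directly into the error budget), this delivers the desired contribution to both \eqref{omeg-s} and \eqref{omeg-s.2}.

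For $A_2$, the starting point is a pointwise estimate. Combining $|S(y_1)-S(y_2)| \leq C|y_1-y_2|/\min(|y_1|,|y_2|)^2$, the smoothness of $k$, and the trivial bound $|S(y)|\leq C/|y|$, I obtain
$$\big|S(x-\gamma(s))k(\gamma'(s)) - S(x-\hat\gamma(s))k(\hat\gamma'(s))\big| \leq C\,\min\!\Big(\frac{1}{r(x,s)},\,\frac{D+\eps}{r(x,s)^2}\Big),$$
where $r(x,s) := \min(|x-\gamma(s)|, |x-\hat\gamma(s)|)$. Integrating in $s$ along the two curves, as in the computation \eqref{eq:U_i.boundary}--\eqref{ledeux} in which the tangential integration absorbs one power of the singularity, yields
$$|A_2(x)| \leq C\big(|\hat\Y|+\|u^\flat\|_{W^{1,\infty}}\big)\,\min\!\big(|\log\rho(x)|,\,(D+\eps)/\rho(x)\big),\qquad \rho(x):=\dist(x,\mathcal C_i\cup\hat{\mathcal C}_i).$$
Around these $1$D curves the tubular measure obeys $|\{x\in K: \rho(x)\in [\rho,\rho+d\rho]\}|\leq C\rho\,d\rho$, so a direct integration gives, for $p<2$,
$$\|A_2\|_{L^p(K)}^p \leq C\int_0^R \big((D+\eps)/\rho\big)^p\, \rho\, d\rho \leq C_p (D+\eps)^p,$$
which together with the bound on $A_1$ yields \eqref{omeg-s}. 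For $p\geq 2$ I split the radial integral at $\rho = D+\eps$, using $|\log\rho|$ on $(0,D+\eps]$ and $(D+\eps)/\rho$ on $[D+\eps, R]$, to obtain $\|A_2\|_{L^p(K)} \leq C(D+\eps)^{2/p}|\log(D+\eps)| \leq C_\delta (D+\eps)^{2/p-\delta}$; since $2/p-\delta \geq 3/p-1/2-\delta$ for $p\geq 2$, this matches \eqref{omeg-s.2}. The main difficulty is the shape-variation argument for $A_2$: obtaining the sharp pointwise bound on the Stokes kernel difference uniformly in $s$ and then performing the tubular Cavalieri integration that converts the pointwise $\min(|\log\rho|, D/\rho)$ bound into an $L^p$-rate with the correct exponent; once this is in place, the two regimes $p<2$ and $p \in [2,6)$ follow from the same computation by choosing the splitting point accordingly.
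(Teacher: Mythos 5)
Your argument is correct, but it takes a genuinely different route from the paper. The paper splits $w$ into a piece $w_2$ carrying the change of data on the fixed curve $\hat{\mathcal C}_i$ and a piece $w_1$ carrying the displacement of the curve, and treats $w_1$ by exploiting the rotational covariance $S(Q^Tx)=Q^TS(x)Q$ to write it as $\psi-Q^T\psi\circ\phi$ with $\psi=U_{\mathcal C_i}[v^{\mathcal S_i}-u^\flat]$ and $\phi$ the rigid motion between the two curves; the $L^p$ bounds then come "softly" from \eqref{est:Poincare.psi} together with the subcritical bounds \eqref{eq:norm.U_i.subcritical} for $p<2$, and from interpolating with the critical Sobolev inequality \eqref{est:Gagliardo} for $2\le p<6$ — that interpolation is precisely where the exponent $\tfrac 3p-\tfrac12-\delta$ originates. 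You instead estimate the kernel difference pointwise and integrate over tubular shells around the two centerlines (a Cavalieri computation parallel to the paper's own slender-body estimates \eqref{ob2}--\eqref{ledeux}). Your route needs the sharp pointwise bound and the tubular-neighborhood lower bound $|x-\gamma(s)|\geq c(\dist(x,\mathcal C_i)+|s-s_x|)$, but it pays off: for $p<2$ you recover the same linear rate, and for $2\le p<6$ you actually obtain the stronger exponent $\tfrac 2p-\delta\geq \tfrac 3p-\tfrac12-\delta$ (indeed your computation works for every $p<\infty$), so \eqref{omeg-s.2} follows a fortiori.

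Two caveats, neither fatal. First, your bookkeeping with the parametrizations produces $|(Q_{\eps,i},h_{i,\eps}+\bar h_{i,\eps})-(\hat Q_i,\hat h_i)|+\eps$ in place of the bare distance, because of the residual term $(\mathrm{Id}+Q_{i,\eps})\bar h_{i,\eps}=O(\eps)$; this cannot be "absorbed for $\eps$ small relative to $D$" (that fails when $D\ll\eps$), so you should simply state the lemma with $D+\eps$ (respectively $(D+\eps)^{\frac 3p-\frac12-\delta}$). This additive $O(\eps)$ (or $O(\eps^{\theta})$) slack is harmless where the lemma is used, since in the proofs of \eqref{conv-flui-precised} and \eqref{conv-flui-precised.comment} it is dominated by the positional error of order $|\log\eps|^{-1/2}$, and the same $O(\eps)$ ambiguity is in fact implicit in the paper's identification of $\phi(\hat{\mathcal C}_i)$ with $\mathcal C_i$. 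Second, the conversion $|\log(D+\eps)|\leq C_\delta(D+\eps)^{-\delta}$ and the splitting at $\rho=D+\eps$ presuppose $D+\eps\leq 1$; you should dispose of the regime $D+\eps\geq 1$ separately (trivially, by bounding the two terms of $w$ individually via \eqref{eq:decay.U}), exactly as the paper does for $|(Q_{\eps,i},h_{i,\eps}+\bar h_{i,\eps})-(\hat Q_i,\hat h_i)|\geq 1$.
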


Before proving Lemma \ref{lemma-doubleu}, we show how to deduce 
\eqref{conv-flui-precised} and \eqref{conv-flui-precised.comment}.

\begin{proof}[Proof of  \eqref{conv-flui-precised} and \eqref{conv-flui-precised.comment}]

Let $T < \hat T$ and $d = \frac 1 4 \min_{t \in [0,T]} \hat d_{\min}(t)$. Then, by Proposition \ref{prop-lifetime}, for $\eps$ sufficiently small, we have $T_\eps^{\max} \geq T$ and $\dmin \geq d$ on $[0,T]$. Therefore, Proposition \ref{pro:u^p.U} and 
 Lemma \ref{lemma-doubleu}, as well as \eqref{est:main.positions} and 
 \eqref{allezesti} yields \eqref{conv-flui-precised} and \eqref{conv-flui-precised.comment}.
\end{proof}

\begin{proof}[Proof of Lemma \ref{lemma-doubleu}]

We first observe that $w$ satisfies
\begin{align*}
    w(x) &= \frac12\int_{\mathcal C_i} S(x-y) k(y) (v^{{\mathcal S}_i} - u^\flat)(y) \dd \mathcal H^1(y) 
    - \frac12 \int_{\hat {\mathcal C}_i} S(x-y) k(y) (\hat v^{{\mathcal S}_i} - u^\flat)(y) \dd \mathcal H^1(y) \\
      &=: w_1 + w_2,
\end{align*}
where 
\begin{align*}
    w_1 (x) 
    &:= \frac12 \int_{\mathcal C_i} S(x-y) k(y) (v^{{\mathcal S}_i} - u^\flat)(y) \dd \mathcal H^1(y) \\
    &-  \frac12 \int_{\hat {\mathcal C}_i} S(x-y) Q k(\phi(y)) (v^{{\mathcal S}_i} - u^\flat)(\phi(y)) \dd \mathcal H^1(y) ,
    \end{align*}
    and
   \begin{align*}
    w_2 (x)  :=& \frac12 \int_{\hat {\mathcal C}_i} S(x-y) \left(Q k(\phi(y)) (v^{{\mathcal S}_i} - u^\flat)(\phi(y)) - k(y)(\hat v^{{\mathcal S}_i} - u^\flat)(y) \right) \dd \mathcal H^1(y),
\end{align*}
where 
$$Q:=Q_{i,\eps} \hat Q_{i}^T
\text{ and } \phi(x) := Q (x - \hat h_i ) + h_{i,\eps} + \bar h_{i,\eps},$$
is the rigid body motion that transforms $\hat {\mathcal C}_i$ to $ {\mathcal C}_i$.
Then, using $S(Q^Tx) = Q^T S(x) Q$, we find
\begin{align*}
    w_1(x) &= \frac12 \int_{\mathcal C_i} S(x-y) k(y) (v^{{\mathcal S}_i} - u^\flat)(y) \dd \mathcal H^1(y) \\
    &\quad - \frac12 Q^T \int_{\mathcal C_i} S(\phi(x)-y) k(y) (v^{{\mathcal S}_i} - u^\flat)(y) \dd \mathcal H^1(y) \\
    &=  U_{\mathcal C_i}[ v^{{\mathcal S}_i} - u^\flat](x)-  Q^T U_{\mathcal C_i}[ v^{{\mathcal S}_i} - u^\flat](\phi(x)).
\end{align*}
Using the fundamental theorem of calculus, we observe that for any $\psi \in W^{1,s}_\loc${, $s\in[1,\infty]$,}
\begin{align} \label{est:Poincare.psi}
\|\psi - \psi \circ \phi\|_{L^s_\loc} \leq C |(Q_{\eps,i},h_{i,\eps} + \bar h_{i,\eps}) - (\hat Q_i,\hat h_i)| \|\nabla \psi\|_{L^s_\loc}.
\end{align}
Hence, for $p<2$, by recalling \eqref{eq:norm.U_i.subcritical}, we infer that 
\begin{align} \label{est:w_1}
    \|w_1\|_{L^p_{\loc}} \leq C |(Q_{\eps,i},h_{i,\eps} + \bar h_{i,\eps}) - (\hat Q_i,\hat h_i)| \| v^{{\mathcal S}_i} - u^\flat\|_{L^\infty(\mathcal C_i)}.
\end{align}
On the other hand, pointwise bounds  analogous to \eqref{eq:decay.U}
imply for all $p < \infty$
\begin{align} \label{est:w_2}
\begin{aligned}
    \|w_2\|_{L^p_{\loc}} &\leq C |(Q_{\eps,i},h_{i,\eps} + \bar h_{i,\eps}) - (\hat Q_i,\hat h_i)| \left(\| v^{{\mathcal S}_i} - u^\flat\|_{L^{\infty}(\mathcal C_i)} + \| \nabla(v^{{\mathcal S}_i} - u^\flat)\|_{L^{\infty}(\R^3)} \right) \\
    &+ C |(v,\omega) - (\hat v,\hat \omega)|.  
    \end{aligned}
\end{align}
Combining \eqref{est:w_1} and \eqref{est:w_2} yields \eqref{omeg-s}.

\medskip

Finally, we give the  proof of \eqref{omeg-s.2}. Notice that in the case $|(Q_{\eps,i},h_{i,\eps} + \bar h_{i,\eps}) - (\hat Q_i,\hat h_i)| \geq 1$, the estimate follows from \eqref{eq:norm.U_i.subcritical} and the  Sobolev embedding by estimating the two terms separately. Thus, since \eqref{est:w_2} holds for all $p < \infty$, it suffices to show  for $2 \leq p < 6$ and  for $|(Q_{\eps,i},h_{i,\eps} + \bar h_{i,\eps}) - (\hat Q_i,\hat h_i)| \leq 1$,
\begin{align} 
    \|w_1\|_{L^p_{\loc}} \leq C |(Q_{\eps,i},h_{i,\eps} + \bar h_{i,\eps}) - (\hat Q_i,\hat h_i)|^{\frac 3 p - \frac 1 2 - \delta}  \| v^{{\mathcal S}_i} - u^\flat\|_{L^\infty(\mathcal C_i)}
\end{align}
With $\psi$ as above, the critical Sobolev inequality yields for $s < 3$ and $s^\ast = (3s)/(3-s)$
\begin{align} \label{est:Gagliardo}
    \|\psi - \psi \circ \phi\|_{L^{s^\ast}_\loc} \leq \|\nabla \psi\|_{L^s_\loc}.
\end{align}
Therefore,
for any $p < 6$, $s < 2$, $\theta \in [0,1]$ such that
\begin{align} \label{eq:numerology}
    \frac 1 p = \frac \theta s + \frac{1 - \theta} {s^*},
\end{align} 
Hölder's inequality, \eqref{est:Poincare.psi} and \eqref{est:Gagliardo} yield 
\begin{align}
    \|\psi - \psi \circ \phi\|_{L^{p}_\loc} \leq |(Q_{\eps,i},h_{i,\eps} + \bar h_{i,\eps}) - (\hat Q_i,\hat h_i)|^{\theta}   \|\nabla \psi\|_{L^s_\loc}
\end{align}
Elementary calculations show that for all $2 \leq p<6$ and any $\delta > 0$  we can choose   $s < 2$, such that \eqref{eq:numerology} holds with 
\begin{align}
    \theta = \frac 3 p - \frac 1 2 - \delta.
\end{align}
Hence, 
\begin{align} \label{est:w_1.comment}
    \|w_1\|_{L^p_{\loc}} \leq C |(Q_{\eps,i},h_{i,\eps} + \bar h_{i,\eps}) - (\hat Q_i,\hat h_i)|^{\frac 3 p - \frac 1 2 - \delta}  \| v^{{\mathcal S}_i} - u^\flat\|_{L^\infty(\mathcal C_i)}.
\end{align}
This concludes the proof of Lemma \ref{lemma-doubleu}.
\end{proof}

\smallskip 
\smallskip 
\smallskip 
\noindent
\subsection*{Acknowledgment}

 The three authors are partially supported by the Agence Nationale de la Recherche,  Project BORDS, grant ANR-16-CE40-0027-01 and by the IDEX of the University of Bordeaux, project BOLIDE. F.S. and C.P. also acknowledge support by the Agence Nationale de la Recherche, Project SINGFLOWS, grant ANR-18-CE40-0027-01. 
  R.H. is supported by the German National Academy of Science Leopoldina, grant LPDS 2020-10. C.P. is also partially supported by the project CRISIS, grant ANR-20-CE40-0020-01. 
  In addition, F.S. is supported by the Agence Nationale de la Recherche, Project IFSMACS, grant ANR-15-CE40-0010.
 F.S. is also partially supported by the H2020-MSCA-ITN-2017 program, Project ConFlex, Grant ETN-765579. This work was partly accomplished while F.S. was participating in a program hosted by the Mathematical Sciences Research Institute in Berkeley, California, during the Spring 2021 semester, and supported by the National Science Foundation under Grant No. DMS-1928930.

%%%%%%%%%%%%%%%%%%%%%%%%%%%%%%%%%%%%
%

\Addresses

\end{document}